\documentclass[11pt]{article}
\usepackage[margin=1in]{geometry}
\usepackage{amsmath,amssymb,amsthm,booktabs}
\usepackage[hidelinks]{hyperref}
\usepackage{natbib}
\usepackage{longtable,array,microtype}
\newtheorem{theorem}{Theorem}[section]
\newtheorem{lemma}[theorem]{Lemma}
\newtheorem{proposition}[theorem]{Proposition}
\newtheorem{corollary}[theorem]{Corollary}
\theoremstyle{remark}

\newcommand{\E}{\mathbb E}
\newcommand{\Pp}{\mathbb P}
\newcommand{\Var}{\operatorname{Var}}
\newcommand{\ind}{\mathbf 1}
\newcommand{\dd}{\mathrm d}
\newcommand{\TV}{d_{\mathrm{TV}}}
\newcommand{\adet}{\alpha_{\mathrm{det}}}
\newcommand{\Conf}{\mathrm{conf}}
\newcommand{\Simple}{\mathrm{simple}}

\title{Finite-size ignition laws near a vanishing branching deficit\\
on near-regular simple graphs}
\author{Aleksandr Rodionov\\
Yandex, Saint Petersburg, Russia\\
\href{https://orcid.org/0009-0006-6710-923X}{ORCID: 0009-0006-6710-923X}\\
\href{mailto:rexarrior@yandex.ru}{\texttt{rexarrior@yandex.ru}}}
\date{Preprint --- 28 October 2026}
\hypersetup{pdftitle={Finite-size ignition laws near a vanishing branching deficit on near-regular simple graphs},pdfauthor={Aleksandr Rodionov},pdfsubject={Bootstrap percolation on prescribed-degree simple graphs},pdfkeywords={bootstrap percolation, critical window, random graphs, Brownian motion}}
\begin{document}
\maketitle
\begin{abstract}
We study heterogeneous bootstrap percolation on uniform simple graphs with
prescribed degrees of mean $d\to\infty$ and deviations at most $C\sqrt d$.
Thresholds are independently one with probability $(1-c/d)/d$ and two
otherwise; seeds form an independent uniform set of prescribed size $a$.
Let $T_a$ be the terminal number of active vertices and $b$ be one minus
the mean number of threshold-one descendants of a reached vertex.
Let $K\sim d^2$ be the quadratic response coefficient. If $b>0$ and
$nb^3/K\to\infty$, we prove, uniformly over seed counts $a$,
\[
\Pp(T_a\ge n/2)=\Phi\!\left(\frac{a-n\adet}{w}\right)+o(1),
\qquad w=\sqrt{nb/K},
\]
where $\adet$ is the exact empirical response-barrier center and $\Phi$
is the standard normal distribution function. Put $a_0=(n/K)^{1/3}$.
If $d^4/n\to0$, $a/a_0\to x>0$ and $\lambda=ba_0\to\ell\in\mathbb R$,
then $\Pp(T_a\ge n/2)\to p(x,\ell)$, the one-sided Brownian parabolic
survival probability. A sequential absolute-error law joins these regimes.
A graphical counterexample shows that replacing the
exact center by its leading term can cause limiting probability error
$1/2$. The proof balances adaptive early-exploration transfer against late
concentration at a tunable physical horizon, despite exponentially rare
simplicity. We also prove quantitative and mesoscopic-seed refinements in
explicitly narrower regimes. The only external graph-enumeration input is
a stated reduction of McKay--Wormald's estimate; the probabilistic estimates
for the cascade argument are proved in the text and appendices.
\end{abstract}

\clearpage
\setcounter{tocdepth}{1}
\begingroup
\small
\tableofcontents
\endgroup
\medskip

\section{Introduction and contribution}
The excess degree determines the initial propagation of threshold-one
activation. As its branching deficit vanishes, the ignition bottleneck
approaches time zero, and a fixed-deficit approximation can miss both its
random minimum and its center at the critical-window scale. We resolve
this moving bottleneck for prescribed near-regular simple graphs and
connect it to the terminal event $T_a\ge n/2$.

The two limiting laws describe the same exploration queue at different
scales. When the normalized deficit $\lambda$ is large and positive,
the deterministic bottleneck lies far from the initial time on the
Brownian scale. Noise changes little across its narrow neighborhood, so a
single Gaussian height determines whether the queue survives. When
$\lambda$ stays bounded, the bottleneck and its fluctuations occupy
comparable time scales: survival depends on the minimum of a Brownian
path with parabolic drift. The exact center aligns these descriptions
when deterministic corrections are visible at the window scale.

\subsection{Related work and contribution}
Response-based centering and Gaussian branch selection are established
features of bootstrap and core processes. Janson, \L uczak, Turova and
Vallier obtain a Gaussian window with refined Poisson-response centering
in $G(n,p)$ \citep[Eq.~(3.16), Theorem~3.6 and Lemma~9.3]{fbnov-jltv}.
Janson and \L uczak prove Gaussian $k$-core critical probabilities for
prescribed-degree graphs, including the uniform simple graph
\citep[Theorem~3.5 and Section~7]{fbnov-jl}.
For fixed regular degree and a common threshold, Janson also describes
the critical window and proves a central limit theorem for the number
of external seeds needed for complete activation in the configuration
multigraph model \citep[Remark~5.4 and Theorem~5.6]{fbnov-janson}.
These locators refer to arXiv:0804.1656v1. This is a direct Gaussian
antecedent; its precise simple-graph counterpart is not established
in that version.
Ballen and Guha treat nonuniform thresholds, fixed seed cardinality and
a full mixed response under a fixed positive branching gap and relative
seed margin \citep[Definition~1 and Theorem~1]{fbnov-bg}.
Amini gives prescribed-degree response and cascade criteria
\citep[Theorems~4 and~5]{fbnov-amini}; Amini, Bayraktar and
Chakraborty use active-end/random-end pairing and prove stopped functional
and stable-root terminal central limit theorems
\citep[Section~1.2 and Theorems~1.3, 1.5]{fbnov-abc}.
Bootstrap/core duality \citep[Section~5]{fbnov-janson} and positive-quadratic
Brownian geometry in finite-size core and epidemic scaling are also known
\citep[Proposition~2.2]{fbnov-dm}
\citep[Eq.~(1.22) and Theorems~1.5--1.6]{fbnov-chmy}.

Balogh--Pittel is a close fixed-degree antecedent. For regular degree $d$
and common threshold $2\le k<d-1$, Theorem~1 bounds the transition by
seed-density margins $\omega n^{-1/2}$; it does not give the Gaussian
profile within that window. Theorem~2 treats $k=d-1$, and Theorem~3
extends the conclusions to random initial sets with concentrated size.
Their main setup already allows a deterministic initial set
\citep[pp.~260--262]{fbnov-bp}. Their response-ratio barrier and deferred
half-edge pairing precede the present construction. Substituting a mixed
response into that barrier gives its regular algebraic counterpart,
but does not extend their probability theorems to our mixed, nonregular
arrays. Their simplicity transfer uses a positive limiting probability
\citep[p.~264]{fbnov-bp}; Proposition~4 is proved for fixed degree
and does not state uniform growing-degree estimates
\citep[pp.~273--275]{fbnov-bp}. Their late
completion argument uses Corollaries~5 and~7 with Lemma~6
\citep[pp.~275--277]{fbnov-bp}. Here the weaker terminal requirement of
an $o(n)$ inactive set suffices. These differences concern the published
estimates, not the possibility of adapting their method.

The contribution is the joint exact Gaussian window, Brownian transition
and terminal macroscopic law for the stated growing-degree simple-graph
arrays. The fixed unscaled degree-law or uniform exponential-moment
hypotheses of the cited core and diffusion limit theorems do not cover
these arrays. The contact approaches the absorbing boundary, its raw
fluctuations are $o(\sqrt n)$, and simplicity has probability
$\exp[-d^2/4+O(d)]$. We address this conditioning by transferring a short
physical transcript and controlling the later evolution separately.
The sole enumeration input is the stated consequence of
McKay--Wormald's Lemma~5.1, explicitly reduced for residual sequences
including zeros \citep{mckay}.

The positive-deficit theorem covers every admissible
$\Lambda=nb^3/K\to\infty$, however slowly. The envelope itself implies
$\Lambda\le C_1n/d^5$, so no extra degree bound is needed.
The bounded-deficit and sequential laws hold under $d^4/n\to0$; the
latter retains one fixed finite lower bound on normalized deficit.
For $d\asymp n^\kappa$, $1/5<\kappa<1/4$, the envelope forces
$\lambda\to0$, leaving no Gaussian layer. An admissible tuning
$b\sim\beta d^{-2}>0$ has its Gaussian window when $n/d^8\to\infty$.
The sequential law follows from the two regimes, Brownian consistency
and monotone tails.

For the same threshold mixture, \citet{article1} proves threshold bounds
with a fixed positive excess-degree margin and fixed relative seed
margins, and derives deterministic finite-degree and higher-moment
corrections \citep[Section~6]{article1}. That work explicitly excludes
a shrinking random critical window. Here we determine the exactly
centered Gaussian probability profile, the Brownian crossover and their
sequential terminal law, including admissible vanishing-deficit regimes.
There is also a degree-range improvement for the macro event at a fixed
positive margin $c+1-v\ge\kappa>0$: here $b\asymp d^{-1}$ and
$\Lambda\asymp n/d^5$, so B gives the zero--one transition for
$T_a\ge n/2$ at fixed relative seed margins when $d=o(n^{1/5})$.
The prescribed-degree theorem of the earlier preprint assumes
$d=o(n^{1/7})$ \citep[Theorem~1]{article1}. The earlier theorem additionally
provides terminal-size bounds: $O(n/d^3)$ active vertices below threshold
and an explicit
$o(n)$ inactive remainder above. Our range comparison concerns the macro event.

The Brownian survival function is a classical one-sided parabolic-maximum
distribution \citep[Section~2]{groeneboom2010}
\citep[Theorems~1.1 and~3.1]{glt2013}. Appendix~\ref{app:seed} gives the
exact scaling correspondence for every real normalized deficit and a
self-contained radial representation of its boundary derivative. Neither
the distribution nor that coefficient is claimed as new.

\paragraph{Scope and route of proof.}
The laws are annealed and concern $T_a\ge n/2$ with fixed seed count and
iid thresholds. Their error is absolute; Theorem~\ref{thm:meso} gives a
relative mesoscopic refinement in a narrower regime.
Fixed-positive-integer seed terminal laws,
quenched laws and full activation are outside the results. The boundary
$n=O(d^4)$ limits this proof route, not necessarily the law itself.
Section~\ref{sec:model} states the main results.
Section~\ref{sec:realizability} records realizability; the full arithmetic
construction is in Appendix~\ref{sec:arith}. Sections~\ref{sec:det}--\ref{sec:range}
develop the common barrier, exploration, concentration and transfer tools.
Section~\ref{sec:X} proves X; Section~\ref{sec:quant} supplies the
quantitative comparisons used to complete B/C in
Section~\ref{sec:bridge}. Only then does Section~\ref{sec:S} give the
narrower tuned terminal-size and rate refinements.
Section~\ref{sec:ledger} records the dependencies. The appendices provide
the small-seed results, elementary probability arguments, arithmetic
construction and higher-order moment comparisons.

\section{Empirical model and the regime statements}\label{sec:model}
For each $n$ let $d_1,\ldots,d_n$ be deterministic nonnegative integers, with
even sum $M=nd$, and suppose
\begin{equation}\label{eq:envelope}
d\longrightarrow\infty,\qquad |d_i-d|\le C\sqrt d,\qquad
\pi=\frac{1-c/d}{d},\quad c>0,
\end{equation}
where $c,C$ are fixed. The exact mean $d$ may be nonintegral. All asymptotic
statements concern sufficiently large $n$; in particular $0<\pi<1$ and
$\delta:=\min_i d_i\ge4$. Write $\Delta=\max_i d_i$ and let $D$ denote the
empirical degree of a uniformly selected vertex. Set
\[
v=\Var(D)/d,\quad \mu_j=\E(D-d)^j,\quad
(k)_j=k(k-1)\cdots(k-j+1).
\]
Assume the degree vector is graphical. On a uniform simple graph with these degrees, choose independent thresholds
$R_i$ with $\Pp(R_i=1)=\pi$ and $\Pp(R_i=2)=1-\pi$. Independently choose a
uniform subset $A_0$ of $a$ seeds, where $a$ is deterministic. Starting with
$A_0$, repeatedly activate every vertex with at least $R_i$ active neighbors.
The terminal active count is $T$; we also write $T_a=T$ when the initial
set has size $a$. The terminal set is the least stable set
containing $A_0$, so the order of valid activations does not matter. All
probabilities in this article average over the graph, thresholds, and seed set.

The degree-weighted response and its exact coefficients are
\begin{align}
B_k(z)&=(1-z)^{k-1}+(1-\pi)(k-1)z(1-z)^{k-2},\label{eq:Bk}\\
B(z)&=\E[DB_D(z)]/d,\qquad F(z)=1-B(z),\label{eq:F}\\
b&=1-F'(0)=1-\pi\E(D)_2/d,\qquad
K=F''(0)=(1-2\pi)\E(D)_3/d.\label{eq:bK}
\end{align}
For $0\le z<1$, $B(z)>0$. Define
\begin{equation}\label{eq:center}
\mathcal R(z)=\frac{z-F(z)}{1-F(z)},\qquad
\adet=\max_{0\le z\le1/2}\mathcal R(z).
\end{equation}
The exact empirical distribution is used in every coefficient and in this
maximum. Replacing it by a limiting degree law is not part of the definition.
The envelope implies $K/d^2=1+O(d^{-1})$ and $v=O(1)$, as also verified below.

Define the exact crossover scales
\begin{equation}\label{eq:Xscales}
a_0=(n/K)^{1/3},\qquad b_0=a_0^{-1},\qquad
u_0=(nK^2)^{-1/3}=b_0/K,\qquad G=da_0,
\end{equation}
and write $\lambda=b/b_0$. Let $\mathcal B$ be standard Brownian motion
and define
\begin{equation}\label{eq:Xlimit}
p(x,\lambda)=\Pp\left(\inf_{s\ge0}
\{x+\mathcal B(s)-\lambda s+s^2/2\}>0\right)\quad(x>0),
\qquad p(x,\lambda)=0\quad(x\le0).
\end{equation}
This is a one-sided time interval, with variance-one Brownian motion.

\begin{theorem}[B: full positive Gaussian layer]\label{thm:B}
Assume~\eqref{eq:envelope}, $b>0$, and $\Lambda=nb^3/K\to\infty$.
Set $w=\sqrt{nb/K}$, and let $\Phi$ denote the standard normal
cumulative distribution function. Then
\[
\sup_{a\in\{0,\ldots,n\}}
\left|\Pp(T_a\ge n/2)-\Phi((a-n\adet)/w)\right|\longrightarrow0.
\]
There is no additional degree-growth assumption: these hypotheses imply
$n/d^5\to\infty$, hence $d^4/n\to0$. The center is the exact maximum
in~\eqref{eq:center}. The compact-coordinate ledger in~\eqref{eq:Bledger}
is not a rate uniform over all possible speeds of $\Lambda\to\infty$.
\end{theorem}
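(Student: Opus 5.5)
I would prove Theorem~\ref{thm:B} through the standard exploration-queue representation of bootstrap percolation on the configuration model, with simplicity handled by transfer rather than direct conditioning.

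\textbf{Queue representation.} Explore unpaired active half-edges one at a time. Let $Z(t)$ denote the number of active unexplored half-edges after $t$ of the $M=nd$ half-edges have been revealed. With the normalized time $z=t/M$, the drift of $Z$ is
\[
M\bigl(a/n + z - F(z)\bigr)
\]
to leading order: in the standard rearrangement, the active-end count is $a$ plus the degree-weighted reached mass minus consumed half-edges. The cascade reaches $n/2$ essentially iff $Z$ stays positive on the early window $z\in[0,1/2]$ and is positive at the end.

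\textbf{Deterministic barrier and its width.} Positivity of the deterministic profile means $a/n>\max_z \mathcal R(z)=\adet$; the denominator $1-F$ arises from normalizing by the remaining unexplored mass. Expand near the maximizer. Since $F(z)=(1-b)z+Kz^2/2+O(d^3z^3)$, the maximizer sits at $z^*\approx b/K$ with curvature $K$. The martingale fluctuation accumulated by time $z^*$ has variance about $n z^*\cdot(\text{offspring variance}\approx1)$, which gives standard deviation $\sqrt{nb/K}=w$.

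\textbf{Width of the barrier neighbourhood.} Around $z^*$, the drift deficit is $nK(z-z^*)^2/2$. The Brownian increment over a time span $\Delta z$ is $\sqrt{n\Delta z}$. Balancing these gives a relevant neighbourhood of width $\Delta z\asymp (nK^2)^{-1/3}=u_0$. The condition $\Lambda=nb^3/K\to\infty$ is exactly $z^*/u_0\to\infty$. Hence the noise within the barrier neighbourhood is negligible compared with $w$, and the survival probability reduces to $\Pp(N(0,w^2)+a-n\adet>0)$ plus $o(1)$. Uniformity over $a$ follows from monotonicity in $a$ together with continuity of $\Phi$.

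\textbf{Late phase.} I would show concentration: once the queue survives past a horizon $z_1=Hz^*$ with $H$ large, the drift is of order $nb\,z$ and dominates fluctuations. A supermartingale or Freedman bound then shows survival until $T\ge n/2$ with probability $1-o(1)$, using $b$'s positivity and the concavity of $F$ on $[0,1/2]$ from the envelope. The case $b\ge$ constant needs only crude bounds.

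\textbf{Main obstacle: passing to simple graphs.} Simplicity has probability $e^{-d^2/4+O(d)}$, so one cannot condition on it naively. My plan is to transfer only the short early transcript, up to the physical horizon $z_1 M$ half-edges. By the stated McKay--Wormald reduction, the law of this transcript under the uniform simple graph is close in total variation to its configuration-model law. The reason is that the probability of simplicity of the residual multigraph, given a partial pairing that is itself simple, is nearly constant across typical transcripts: the ratio of residual enumeration formulas varies by $1+o(1)$ when the revealed part is small, which needs $d^4/n\to0$, implied here as stated.

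The late phase is then argued directly on the simple graph. Conditional on the transcript, the residual graph is uniform simple with the residual degrees, including zeros, and the concentration estimate is proved for that model uniformly over the residual sequence. Tuning $H$ so that the transferred window is long enough for the Gaussian decision, yet short enough for the enumeration error to stay $o(1)$, is the step I expect to be most delicate.
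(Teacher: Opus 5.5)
\textbf{Genuine gap: the late phase under the simple law.}

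Much of your outline follows the paper's route: the stopped exploration queue, the exact barrier with $\widehat z\approx b/K$, localization at scale $u_0$ with $\widehat z/u_0=\Lambda^{1/3}\to\infty$, TV transfer of a short physical transcript, and monotone coupling in $a$. Two slips are minor. Your drift should be $a/n+(1-a/n)F(z)-z$. The late margin comes from $F(z)-z\ge Kz^2/8$ past the barrier, not from concavity of $F$.

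The gap is how you handle the late phase once simplicity is imposed. You propose to argue it ``directly on the simple graph'', with a concentration estimate ``proved for that model uniformly over the residual sequence'' and a Freedman bound giving survival with probability $1-o(1)$. You give no mechanism for this. The natural one is to prove it in the configuration model and condition on residual simplicity. That costs a factor $e^{\Theta(d^2)}$ again, because the residual sequence has essentially the same simplicity probability. (The residual must also avoid the exposed edges, and partially marked threshold-two vertices carry over, so it is not a fresh instance.) You therefore need configuration-model late failure probabilities $e^{-\omega(d^2)}$, not $o(1)$. TV transfer cannot replace this step: the late phase reveals $\Theta(nd)$ pairs, and the TV cost is $O(md/n+d^3/n)$.

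At your horizon $Hz^*$ with fixed $H$, the vertex-noise exponent is of order $nK^2h^3/(1+d^2h)\asymp H^3\Lambda$. This need not exceed $d^2$. For example, $b\asymp 1/d$ with $d=n^{0.19}$ gives $\Lambda\asymp n^{0.05}$, while $d^2=n^{0.38}$.

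\textbf{What is missing: the tuned horizon.} The paper uses $h_*=\max(u_L,h_0)$ from Lemma~\ref{lem:rangehorizon}, which must satisfy two constraints:
\begin{itemize}
\item $d^2h_*\to0$, so the capped transcript with $m=\lceil 4Mh_*\rceil$ has TV cost $O(d^2h_*+d^3/n)=o(1)$;
\item $nK^2h_*^3\gg d^2+\log n$, so the configuration-model bound $Cn^2e^{-kH_*}$ survives multiplication by $e^{C_sd^2}$.
\end{itemize}
The intermediate real event $\{u_L<\tau\le h_*\}$ is then bounded by an $O(1/\Lambda)$ dyadic estimate in the configuration model. That event is transferred by TV, not divided by the simplicity probability.

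Both constraints can be met exactly when $d^4/n\to0$. This is where that condition enters, not in the TV step, which needs only $md/n\to0$ and $d^3/n\to0$. So the lower constraint in your ``delicate tuning'' is the simplicity cost. It is not the Gaussian decision, which is already settled at $u_L=O(b/K)$.

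Without this balance, the division step fails whenever $\Lambda=o(d^2)$. Since $\Lambda\le C_1n/d^5$, this happens throughout $d^5\ll n\ll d^7$. Theorem~B asserts the law for every $\Lambda\to\infty$, however slowly.
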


\begin{theorem}[X: compact Brownian layer]\label{thm:X}
Assume~\eqref{eq:envelope} and $d^4/n\to0$. If
$\lambda\to\ell\in\mathbb R$ and $a/a_0\to x>0$, then
$\Pp(T_a\ge n/2)\to p(x,\ell)$, with $0<p(x,\ell)<1$.
If $|\lambda|$ is bounded, $a=o(a_0)$ gives a limit zero and
$a/a_0\to\infty$ gives a limit one. At $a=0$, $T_0=0$ exactly.
The underlying functional convergence is proved only on fixed compact
scaled time intervals; no growing-interval Brownian approximation is asserted.
\end{theorem}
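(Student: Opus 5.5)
We prove Theorem~\ref{thm:X} by exploring the cascade as a queue of unexplored active half-edges, pairing them one at a time by the deferred (active-end/random-end) method on the configuration multigraph. Write the exploration clock as $t$ = number of paired active half-edges, rescaled by $s=t/(Ku_0^{-1}\cdot u_0^{2})$. Equivalently, take the fraction of explored half-edges $z=t/(nd)$ and set $s=z/u_0$.

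\textbf{The queue and its drift.} Let $Q(t)$ be the number of unexplored active half-edges, up to a $d$-factor normalization, so that the seeds contribute $\approx a\,d$. The process $Q(t)-d\,a$ has drift $-d\,(z-F(z))\,nd$ to leading order. A second-order Taylor expansion of $F$ at $0$, using $F'(0)=1-b$ and $F''(0)=K$, gives
\[
z-F(z)=bz-\tfrac{K}{2}z^2+O(d^3z^3).
\]
The third derivative of $F$ is $O(d^3)$ under the envelope~\eqref{eq:envelope}. On compact $s$-ranges we have $z=su_0$, so $K z^2/2\cdot n = a_0 s^2/2$ and $bz\,n=\lambda a_0 s$. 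The cubic remainder is $O(n d^3u_0^3)=O(a_0\, d^3/(K^{2}\cdot n^{0}) \cdot u_0^{2}\cdot n)$, and this is $o(a_0)$ because $d^4/n\to0$. Dividing by $a_0$ gives the deterministic part $x-\lambda s+s^2/2$. The variance per step is dominated by threshold-one triggers, which occur with probability $\approx\pi(k-1)$ and produce $\approx d$ new half-edges. Summing over $nd\,u_0 s$ steps, the variance is $\approx d^2\cdot\pi d\cdot n u_0 s\cdot(1+o(1))=K n u_0 s(1+o(1))=a_0^2 s$. The scaled fluctuation is therefore a martingale with quadratic variation $\to s$, and the martingale CLT (Lindeberg holds since jumps are $O(d)=o(a_0)$ when $d^4/n\to0$) gives convergence to $\mathcal B$ on every compact $[0,S]$. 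This is the functional convergence that the theorem restricts to compacts.

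\textbf{Linking the queue to $T_a\ge n/2$.} Take a large $S$. If the scaled queue hits $0$ before $S$, the cascade has died, and $T_a=O(a_0 d)=o(n)$. If it survives to $S$, the drift $s^2/2$ dominates the Brownian fluctuation $O(\sqrt S)$. Past the compact window we switch to concentration, using the late-concentration tools of Sections~\ref{sec:det}--\ref{sec:range}. By then $\mathcal R(z)<a/n$ holds uniformly for $z$ between $Su_0$ and $1/2$, with margin of order $a_0S^2$, which exceeds the standard deviation $a_0\sqrt{z/u_0}$. Azuma/Freedman with a union over dyadic blocks then shows the queue stays positive, so the cascade reaches half the graph with probability $1-\epsilon(S)$ where $\epsilon(S)\to0$. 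Then we let $S\to\infty$, using $p(x,\ell)=\lim_S\Pp(\inf_{s\le S}\{\cdots\}>0)$, which holds because the parabola wins. Continuity of the infimum functional at Brownian paths handles the boundary. We get $0<p<1$ because Brownian motion started at $x>0$ hits $0$ with positive probability and also avoids it with positive probability. The cases $a=o(a_0)$ and $a/a_0\to\infty$ follow by monotonicity in $a$ together with $p(x,\ell)\to0,1$ as $x\to0,\infty$. At $a=0$ nothing is ever active.

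\textbf{Main obstacle: transfer to the simple graph.} Simplicity has probability $e^{-d^2/4+O(d)}$, so naive conditioning destroys every $o(1)$ estimate. The plan is to transfer only the short transcript, meaning the first $Su_0nd=O(a_0 d S)$ pairings. The number of these pairings is $o(\sqrt{nd}\cdot\ldots)$, and $d^4/n\to0$ makes the induced loops and multi-edges among them rare. Using the McKay--Wormald reduction for residual degree sequences (including zeros), the uniform simple graph conditioned on any such partial transcript has pairing probabilities matching the configuration ones up to $1+o(1)$ factors uniformly, which gives total-variation closeness of the transcript laws. The later evolution is then handled directly on the simple graph by the concentration argument applied to the residual degree sequence, without transferring it. Making these $1+o(1)$ ratios uniform over adaptive transcripts is the step I expect to be hardest.
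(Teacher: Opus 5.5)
\textbf{Gap: the late passage under the simple-graph law.} Your compact-window analysis is sound in outline: the drift $x-\lambda s+s^2/2$, the Brownian fluctuations on a fixed $[0,S]$, and the TV transfer of the first $\approx nd\,u_0S$ pairings. The failure is the step ``the later evolution is then handled directly on the simple graph by the concentration argument applied to the residual degree sequence''. Given the transcript, the rest of the uniform simple graph is the residual configuration conditioned on residual simplicity and on avoiding the exposed edges. That event again has probability $e^{-d^2/4+O(d)}$, and you give no mechanism for concentration of the exploration under the simple law itself.

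Your dyadic Azuma/Freedman argument on the configuration model bounds the probability of dying after $Su_0$ only by something like $e^{-cS^3}$ (or $CS^{-3}$ via Doob). That bound depends on $S$ but not on $n$, so dividing it by $e^{-d^2/4+O(d)}$ is hopeless for fixed $S$ as $d\to\infty$. Nor can the late event be absorbed into the transferred transcript. The TV cost $O(md/n+d^3/n)$ is of order $d^2z$ at explored fraction $z$, so it stops being small once $z\gtrsim d^{-2}$, long before the macroscopic scale.

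A symptom is that $d^4/n\to0$ never does real work in your proposal. The cubic remainder is $O(du_0S^3)$, the Lindeberg ratio is $\Delta/G=O(1/a_0)$ (jumps $O(d)$ against scale $G=da_0$, not $d=o(a_0)$), and the fixed-$S$ transcript has $md/n\asymp Sb_0$. All of these are small under $d^3/n\to0$ alone.

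\textbf{Missing idea: a growing intermediate horizon.} Take $h_0=q_0u_0$ with $q_0b_0\to0$, so the transcript to $h_0$ transfers cheaply. Also require $q_0^3\gg J=d^2+\log(n+2)$, so the late Bernstein exponents beat both the simplicity cost and an $n^2$ grid prefactor. Such $q_0$ exists if and only if $b_0^3J=KJ/n\to0$, which is exactly $d^4/n\to0$. The paper takes $q_0=\theta/b_0$ with $\theta=(KJ/n)^{1/6}$, giving exponent $nK^2h_0^3=\sqrt{nJ/K}\gg J$.

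The argument then has three pieces.
\begin{itemize}
\item Between $Su_0$ and $h_0$, bound the \emph{real} event $\{Su_0<\tau\le h_0\}$ on the configuration model by dyadic second moments. These sum to $CS^{-3}+o(1)$ with no interval-count factor.
\item Transfer that event through the capped transcript to $h_0$, at TV cost $O(\theta+d^3/n)$.
\item Only beyond $h_0$ divide exponentially small configuration events by the simplicity probability $e^{-C_sd^2}$. This includes the terminal inactive-count bound at tolerance $\varepsilon nd$ with $n\varepsilon^2=\sqrt{nJ}$.
\end{itemize}
Only then let $S\to\infty$.

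The step you flagged as hardest is comparatively routine. Input E for residual sequences plus a switching bound for exposed edges already gives density $1+O(md/n+d^3/n)$ on every simple history.

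A smaller point: your martingale CLT concerns a process killed at extinction. You also need a continuation past $\tau$ (the paper uses a latent clock field $H(u)$ defined for all $u$). You then need a sandwich of $\{\tau>Su_0\}$ between minima of that latent process to convert functional convergence into convergence of the survival probability.
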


\begin{corollary}[C: exact-centered sequential unification]\label{thm:C}
Assume~\eqref{eq:envelope}, $d^4/n\to0$, and $\lambda\ge-L_0$ for
one fixed finite $L_0$. With $\lambda_+=\max(\lambda,0)$, along every
such admissible deterministic array,
\[
\sup_{a\in\{0,\ldots,n\}}
\left|\Pp(T_a\ge n/2)-p\left(
\frac{a-n\adet}{a_0}+\frac{\lambda_+^2}{2},\lambda\right)\right|
\longrightarrow0.
\]
This is an absolute-error statement. It does not cover
$\lambda\to-\infty$ or supply relative error for fixed positive $a$.
\end{corollary}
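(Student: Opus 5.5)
The plan is a subsequence argument. It reduces Corollary~\ref{thm:C} to Theorems~\ref{thm:B} and~\ref{thm:X}, plus two properties of $p$: continuity, and Gaussian asymptotics as $\lambda\to\infty$.

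Suppose the supremum does not tend to zero. Then along some subsequence there are seed counts $a=a_n$ and an $\varepsilon>0$ with error at least $\varepsilon$. Since $\lambda\ge-L_0$, we may pass to a further subsequence on which either $\lambda\to\ell\in[-L_0,\infty)$ or $\lambda\to\infty$. Write $y=(a-n\adet)/a_0+\lambda_+^2/2$.

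\emph{Bounded case.} Here $\lambda\to\ell$, and we first identify $n\adet/a_0$. Expanding $\mathcal R(z)=z-F(z)+O(z^2\cdot\text{small})$, with $F(z)=(1-b)z+Kz^2/2+O(d^3z^3)$, the maximum over $z\in[0,1/2]$ sits near $z^\ast=b/K$ when $b>0$, and
\[
\adet=\frac{b^2}{2K}\,(1+o(1))+O\!\left(\frac{b}{n}\cdot\text{lower order}\right).
\]
Multiplying by $n/a_0$ gives $n\adet/a_0=\lambda_+^2/2+o(1)$. When $b\le0$ the maximum is at $z=0$, so $\adet=0$ and $\lambda_+=0$. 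For this step I need error terms of order $d^3z^{\ast3}n/a_0=o(1)$ on the scale $z^\ast\asymp u_0$. This should follow from $d^4/n\to0$, since $nu_0^3d^3/a_0\asymp (d^3/K^{2})\,u_0\cdot\ldots\to0$. Consequently $y=a/a_0+o(1)$.

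Pass to a further subsequence with $a/a_0\to x\in[0,\infty]$. If $x\in(0,\infty)$, Theorem~\ref{thm:X} gives the limit $p(x,\ell)$, and continuity of $p$ in $(x,\lambda)$ matches $p(y,\lambda)$. If $x=0$ or $x=\infty$, Theorem~\ref{thm:X} gives the limit $0$ or $1$. These agree with $p(y,\lambda)$ because $p(x,\lambda)\to0$ as $x\downarrow0$ and $p\to1$ as $x\to\infty$, uniformly for bounded $\lambda$. The case $a=0$ is handled by $T_0=0$. Continuity of $p$, joint in $(x,\lambda)$ on $(0,\infty)\times\mathbb R$ and extending to $x=0$ with value $0$, follows from the Brownian representation: the infimum process is a.s.\ continuous in its parameters, and the event that the infimum equals zero has probability zero, because Brownian motion started at $0$ immediately goes negative.

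\emph{Divergent case.} Here $\lambda\to\infty$, so $\Lambda=nb^3/K=\lambda^3\to\infty$ and Theorem~\ref{thm:B} applies. The task is to show $\sup_y|p(y,\lambda)-\Phi(\cdot)|\to0$ with matching arguments, which is the main obstacle. The scale conversion is
\[
w/a_0=\sqrt{nb/K}\,(K/n)^{1/3}=\lambda^{1/2},
\]
so Theorem~\ref{thm:B} gives $\Phi\bigl((y-\lambda^2/2)/\sqrt\lambda\bigr)$. The needed Brownian fact is
\[
p(y,\lambda)=\Phi\bigl((y-\lambda^2/2)/\sqrt\lambda\bigr)+o(1)
\]
uniformly in $y$ as $\lambda\to\infty$. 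I would prove it directly. The drift $-\lambda s+s^2/2$ has its minimum $-\lambda^2/2$ at $s=\lambda$. Near that point the path is $y+\mathcal B(\lambda)-\lambda^2/2+\mathcal B(s)-\mathcal B(\lambda)+(s-\lambda)^2/2$. On $|s-\lambda|\le\lambda^{1/3}$ the local increments are $O(\lambda^{1/6})=o(\sqrt\lambda)$, and away from that window the parabola $(s-\lambda)^2/2\ge\lambda^{2/3}/2$ dominates the Brownian fluctuation. For the far part this uses a union bound over dyadic blocks and a separate treatment of $s\in[0,\lambda/2]$, where the drift is still of order $\lambda s$ compared with $\sqrt{s}$ noise; the initial segment near $s=0$ costs only $y>0$. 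Together these give survival if and only if $y-\lambda^2/2+\mathcal B(\lambda)>o_P(\sqrt\lambda)$, up to an event of vanishing probability. Since $\mathcal B(\lambda)/\sqrt\lambda$ is standard normal, with bounded density, the claim follows uniformly in $y$, with the region $y\le0$ consistent because both sides are $o(1)$ there.

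Each subsequence thus yields error $o(1)$, a contradiction. This proves Corollary~\ref{thm:C}.
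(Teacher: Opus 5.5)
Your proposal is correct and follows essentially the paper's route. It splits along subsequences into two cases: bounded $\lambda$, handled by Theorem~\ref{thm:X}, the center correction $n\adet/a_0=\lambda_+^2/2+o(1)$, and joint continuity and tails of $p$; and $\lambda\to+\infty$, handled by Theorem~\ref{thm:B} with $w=a_0\sqrt\lambda$ plus uniform Brownian consistency. The differences are minor. You should obtain the center correction by citing Theorem~\ref{thm:D}'s bound $\adet=\frac{b^2}{2K}[1+O(b/d)]$ rather than your loose expansion of $\mathcal R$. Your direct $\lambda^{1/3}$-window localization at $s=\lambda$ gives uniformity in $y$ at once, whereas the paper rescales $s=\lambda r$, proves~\eqref{eq:qConsistency} for bounded $t$, and then extends by monotone endpoints.
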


The Gaussian part is not nonempty throughout $d=o(n^{1/4})$:
the envelope forces $\Lambda\le C_1 n/d^5$. For
$d\asymp n^\kappa$, $1/5<\kappa<1/4$, every admissible array instead
has $\lambda\to0$. Section~\ref{sec:examples} gives graphical examples
and distinguishes the proof-route obstruction at $n=O(d^4)$ from a
counterexample to the law. The exact-center counterexample is proved in
Section~\ref{sec:bridge}.

\begin{theorem}[Mesoscopic seeds; narrower graph regime]\label{thm:meso}
Assume~\eqref{eq:envelope}, $d\asymp\log^2n$ with fixed comparison
constants, and $|\lambda|\le L$ for one fixed finite $L$. If
$a_0/\log d\ll a\ll a_0$, then
\[
\Pp(T_a\ge n/2)=\kappa(\lambda)\frac{a}{a_0}(1+o(1)),
\]
where the finite positive continuous $\kappa$ and its radial Brownian
representation are proved in Appendix~\ref{app:seed}. If
$\lambda\to\ell$, one may replace $\kappa(\lambda)$ by $\kappa(\ell)$.
This relative law is not extended to the full degree range of Theorem~\ref{thm:X}.
\end{theorem}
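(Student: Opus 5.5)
The plan is a two-stage ladder argument. A small first passage from height $a$ to an intermediate height $\varepsilon a_0$ gives a factor $a/(\varepsilon a_0)$ with relative error. The compact Brownian law of Theorem~\ref{thm:X}, started from height $\varepsilon$, then gives the factor $p(\varepsilon,\lambda)$. Finally we let $\varepsilon=\varepsilon_n\downarrow0$ slowly and use the boundary derivative $p(\varepsilon,\lambda)=\kappa(\lambda)\varepsilon(1+o(1))$ from Appendix~\ref{app:seed}. The derivative must hold locally uniformly in $\lambda\in[-L,L]$, which the continuity of $\kappa$ should supply.

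\textbf{Stage 1: ladder to height $\varepsilon a_0$.} We work with the exploration queue $Q_t$ (active-end pairing) used for Theorem~\ref{thm:X}, started from $Q_0=a$. Let $\tau$ be the first time $Q_t$ leaves $(0,\varepsilon a_0)$. On the scaled time horizon $\tau\lesssim(\varepsilon a_0)^2$, the drift $-b+Kz$ accumulates at most $O((|\lambda|\varepsilon^2+\varepsilon^3)a_0)$, which is $O(\varepsilon)$ relative to the target height. Optional stopping for the near-martingale $Q_t$ then gives
\[
\Pp(Q_\tau\ge\varepsilon a_0)=\frac{a}{\varepsilon a_0}\bigl(1+O(\varepsilon)+O(\text{overshoot}/(\varepsilon a_0))\bigr).
\]
The overshoot is bounded by the maximal step, which is $O(\Delta)=O(d)$. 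It is negligible because $\varepsilon a_0\gg d$. A second-moment bound on the number of steps controls the nonconstant step variance. Here the step variance is $1+o(1)$ after the normalization built into $K$.

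\textbf{Stage 2: survival from height $\varepsilon a_0$.} On $\{Q_\tau\ge\varepsilon a_0\}$ we apply the strong Markov property at $\tau$. The residual degree sequence is still in the envelope, and the elapsed scaled time is $O(\varepsilon^2)$. Theorem~\ref{thm:X} (more precisely its quantitative compact-interval version from Sections~\ref{sec:quant}--\ref{sec:bridge}) then gives conditional probability $p(\varepsilon,\lambda')+o(\varepsilon)$, with $\lambda'=\lambda+O(\varepsilon)$.

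\textbf{Combining, and the main obstacle.} Multiplying the two stages and letting $\varepsilon\to0$ gives $\kappa(\lambda)\,a/a_0\,(1+o(1))$, provided every error is $o(a/a_0)$. Since $a/a_0\ge1/\log d$, this needs errors of size $o(1/\log d)$. That is where $d\asymp\log^2n$ enters. The hardest step is Stage~2's additive error $o(\varepsilon)$, together with the transfer from the configuration model to the simple graph. That error must be $o(\varepsilon)$, not merely $o(1)$, for a sequence $\varepsilon_n\to0$. I would first try a quantitative strong approximation (Koml\'os--Major--Tusn\'ady type) for the scaled queue on a fixed compact time interval. Its error is $O(d\log n/a_0)$ in sup-norm. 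Combined with the Lipschitz-in-$x$ behaviour of $p$, this yields an additive error polynomially small in $a_0^{-1}$, hence $o(1/\log d)$. The tail of the infimum beyond the compact interval is handled by the monotone-tail argument already used for Corollary~\ref{thm:C}. The simplicity transfer should be carried out on the short physical transcript, as in the proof of Theorem~\ref{thm:X}, with the transcript length chosen so that the McKay--Wormald ratio error is $o(1/\log d)$. The restriction $d\asymp\log^2n$ is what should make this tunable horizon compatible with both requirements. If the horizon cannot be tuned this way, the fallback is to shrink $\varepsilon$ only as fast as the available rate permits; the lower bound $a\gg a_0/\log d$ is exactly what that fallback can afford.
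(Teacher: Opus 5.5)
\textbf{There is a genuine gap at Stage~2, and the proposed repair does not close it.}

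With $\varepsilon=\varepsilon_n\downarrow0$, you require that survival from height $\varepsilon a_0$ equal $p(\varepsilon,\lambda')+o(\varepsilon)$. That is itself a relative boundary law at a vanishing coordinate, i.e.\ essentially the theorem you are proving. Worse, it is now demanded from a non-fresh state.

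Neither Theorem~\ref{thm:X} nor its quantitative form \eqref{eq:qCompactBound} covers that situation. Both concern a fresh uniform seed set on the uniform simple graph, and both give only absolute error. At your stopping time the state is an exploration history:
\begin{itemize}
\item active ends sit on explored vertices rather than on seeds;
\item some threshold-two vertices already carry a mark;
\item residual degrees are reduced or zero;
\item under the simple law, the residual graph is conditioned to avoid the exposed edges.
\end{itemize}
The paper singles out exactly this continuation from entrance histories as unproved; see \eqref{eq:seedmissing} and its remark that entrance histories are not fresh seed sets.

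You could reorder the limits: fix $\varepsilon$, let $n\to\infty$, then let $\varepsilon\downarrow0$. That reduces Stage~2 to an $o(1)$ statement. You would still need a Theorem-X-type continuation law that holds under the simple law, uniformly over such histories, outside a set of probability $o(a/a_0)$. This is not available.

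The KMT suggestion also misses the actual bottleneck. A coupling on a fixed compact interval says nothing about passage beyond it. There the graph bridge \eqref{eq:Xbridge}, \eqref{eq:qXtail} costs $CR^{-3}$ in absolute terms. Enlarging $R$ costs a factor $e^{J_R/2}\approx e^{R^3/6}$ in the minimum anti-concentration bound $\mathcal A_R$. The monotone-tail argument of Corollary~\ref{thm:C} is monotonicity in $a$, not a time-tail estimate, so it cannot substitute here.

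\textbf{What the paper does instead.} The paper uses no ladder. It proves the absolute estimate \eqref{eq:seedabsolute},
$|\Pp(T_a\ge n/2)-p(a/a_0,\lambda)|\le C/\log d$, uniformly for $8d^{-1/8}\le a/a_0\le X$ and $|\lambda|\le L$.

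It gets this by applying \eqref{eq:qCompactBound} directly at shrinking heights. The strict height condition $x>4\varepsilon+2\mathcal D_R$ still holds with $\varepsilon=\eta=d^{-1/8}$. The parameters are $R=(\log d/1000)^{1/3}$, $q=\lceil Rd^{1/8}\rceil$, $v=d^{-1/64}$ and $S=d$. With these choices $e^{J_R/2}$ is only a tiny power of $d$. Every error is then dominated by the tail terms $R^{-3}=1000/\log d$ from \eqref{eq:qXtail} and \eqref{eq:qBrownianTail}. The restriction $d\asymp\log^2n$ is used in this rate, notably in the late envelope \eqref{eq:qXlate}.

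Since $a/a_0\gg1/\log d$, dividing by $x=a/a_0$ turns this absolute error into a relative one. Theorem~\ref{thm:boundary} then gives $p(x,\lambda)=\kappa(\lambda)x+o(x)$ uniformly on $|\lambda|\le L$, and \eqref{eq:seedcompactpositive} permits the division.

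This is exactly where the lower bound $a\gg a_0/\log d$ comes from. Your proposal asserts that bound but does not derive it.
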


Table~\ref{tab:regimes} summarizes the regimes. The proof dependencies
and further quantitative assertions are recorded in
Table~\ref{tab:dependencies}.
\begin{table}[htbp]
\centering
\small
\setlength{\tabcolsep}{4pt}
\renewcommand{\arraystretch}{1.0}
\setlength{\abovecaptionskip}{0pt}
\caption{Regimes under the common model~\eqref{eq:envelope}.
The ignition probability laws concern $T_a\ge n/2$; S also gives
terminal-size bounds.}
\label{tab:regimes}
\begin{tabular}{@{}>{\raggedright\arraybackslash}p{.12\linewidth}>{\raggedright\arraybackslash}p{.43\linewidth}>{\raggedright\arraybackslash}p{.40\linewidth}@{}}
\toprule
Result & Additional assumptions & Conclusion \\
\midrule
B~(\ref{thm:B}) &
$b>0$, $nb^3/K\to\infty$ &
Exactly centered Gaussian law; absolute error uniform in $a$. \\
X~(\ref{thm:X}) &
$d^4/n\to0$, $\lambda\to\ell\in\mathbb R$, $a/a_0\to x>0$ &
Brownian limit $p(x,\ell)$; extreme seed scales are covered separately. \\
C~(\ref{thm:C}) &
$d^4/n\to0$, $\lambda\ge-L_0$ for fixed finite $L_0$ &
Exact-centered sequential law; absolute error uniform in $a$. \\
S~(\ref{thm:S}) &
$b\sim\beta d^{-2}$ for fixed $\beta>0$, $n/d^{10}\to\infty$.
Seed regimes as specified in Theorem~\ref{thm:S}. &
Stronger terminal-size bounds and a quantitative Gaussian budget. \\
Mesoscopic (\ref{thm:meso}) &
$d\asymp\log^2 n$, bounded $|\lambda|$, $a_0/\log d\ll a\ll a_0$ &
Relative law $\kappa(\lambda)a/a_0\,(1+o(1))$. \\
\bottomrule
\end{tabular}
\end{table}

\section{Realizability of the regimes}\label{sec:realizability}
The limit laws apply to any graphical degree array satisfying the stated
conditions. Their near-critical regimes are nonempty when $C^2>c+1$:
Proposition~\ref{prop:realize} constructs symmetric graphical empirical
laws on at most five degree values. For integer $d\to\infty$ and even
$n$ with $n/d\to\infty$, these include, for every fixed real $\beta$,
\[
b=\beta/d^2+O(1/(nd^2)).
\]
More generally, a target deficit $t$ can be matched to error
$O(1/(nd^2))$ whenever its target variance
$V_t=V_*-d^3t/(d-c)$ lies in $[v_-d,v_+d]$ for fixed
$0<v_-<c+1<v_+<C^2$, where $V_*=d+cd^2/(d-c)$.
The full lattice construction, parity
conditions and criterion for exact $b=0$ are proved in
Appendix~\ref{sec:arith}. Section~\ref{sec:examples} gives the resulting
Gaussian and Brownian examples.

\section{Uniform deterministic barrier and moment expansion}\label{sec:det}
This section analyzes the deterministic response barrier. The stochastic
ignition laws are proved in Sections~\ref{sec:X} and~\ref{sec:bridge}.
Define the additional exact coefficients
\[
T_3=F'''(0)=-(2-3\pi)\E(D)_4/d,\qquad
U_4=F''''(0)=(3-4\pi)\E(D)_5/d.
\]
Direct expansion of factorial moments gives
\begin{align}
b&=(c+1-v)/d+c(v-1)/d^2,\label{eq:bexact}\\
K&=(1-2/d+2c/d^2)
[d^2+(3v-3)d+2-3v+\mu_3/d].\label{eq:Kexact}
\end{align}
In particular $K/d^2=1+O(d^{-1})$, since $\mu_3=O(d^{3/2})$.

\begin{theorem}[Barrier for arbitrarily small actual deficit]\label{thm:D}
Under~\eqref{eq:envelope}, for sufficiently large $d$, if $b\le0$ then
$\adet=0$ and the unique maximizer is zero. If $b>0$, there is a unique
global maximizer $\widehat z$ on $[0,1)$, it lies below $1/(8\Delta)$, and
uniformly over all such positive deficits,
\begin{equation}\label{eq:Dleading}
\widehat z=\frac bK[1+O(b/d)],\qquad
\adet=\frac{b^2}{2K}[1+O(b/d)].
\end{equation}
No lower bound on $b$ is imposed. Writing $u=b/K$, $A=Ku^2/2$, one has
\begin{align}
\widehat z&=u+\frac{K-T_3}{2K}u^2+O(d^2u^3),\label{eq:location}\\
\adet&=A-\frac{T_3}{6}u^3+Au-A^2
 +\left(\frac{T_3^2}{8K}-\frac{U_4}{24}
 -\frac{5T_3}{12}+\frac{5K}{8}\right)u^4+O(d^5u^5).
\label{eq:Drefined}
\end{align}
All constants depend only on $c,C$.
\end{theorem}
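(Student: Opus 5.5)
The plan is to study $g(z)=z-F(z)$ and the ratio $\mathcal R(z)=g(z)/(1-F(z))=g(z)/B(z)$ on $[0,1)$. We split $[0,1)$ into a small-$z$ region $z\le z_1:=1/(8\Delta)$, where Taylor expansion of $F$ at zero is uniformly controlled, and a complementary region, where we show $\mathcal R<0$ or at least $\mathcal R$ is below its value near $\widehat z$.

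\textbf{Derivative bounds.} First I would record uniform bounds on the derivatives of $F$ on $[0,z_1]$. Since $B_k$ is a polynomial in $z$ with falling-factorial coefficients, and $(1-z)^{k-j}\ge e^{-O(1)}$ for $k\le\Delta$ and $z\le 1/(8\Delta)$, one gets $|F^{(j)}(z)|\le C_j d^{\,j}$ uniformly there. Together with $K/d^2=1+O(1/d)$, $T_3=O(d^3)$ and $U_4=O(d^4)$ (from~\eqref{eq:Kexact} and the envelope), this gives $F''(z)=K(1+O(dz))$ on $[0,z_1]$. Hence $g'(z)=b-F''(\xi)z$, and $g$ is strictly concave on $[0,z_1]$ with curvature of order $d^2$.

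\textbf{Case $b\le 0$.} Then $g'(0)=b\le0$ and $g''<0$, so $g<0$ on $(0,z_1]$. For $z\ge z_1$ I would show $F(z)>z$ directly. The threshold-two mass makes $F(z)\ge \Pp(\text{Bin}(D-1,z)\ge2)$ roughly. At $z=z_1$ this is of order $(dz_1)^2\gg z_1$ only if $d z_1^2\gg z_1$; since $dz_1\approx1/8$, that fails, so this crude comparison is not enough. Instead I would use that $F(z)-z$ is positive at $z_1$ by a margin of order $Kz_1^2\asymp 1/d^{0}\cdot z_1/8$ from the concave part, and for larger $z$ use monotonic comparison: $1-F(z)=B(z)\le \E[D(1-z)^{D-2}(1+(D-1)z)]/d$. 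This is $\le e^{-cdz}(1+dz)$, which is much less than $1-z$ once $dz\ge1/8$ and $d$ is large. The intermediate range $z\in[z_1, C/d]$ is the delicate part. There I would use the exact expression $F(z)-z=\E[D\,\Pp(\text{Bin}(D-1,z)\ge R)]/d-z$ and a convexity argument: with $h(z)=F(z)/z$, show $h$ is increasing on $(0,1/2]$, which forces $F(z)-z$ to keep the sign it has at $z_1$. Thus $\mathcal R\le0$, with equality only at $0$, so $\adet=0$ with unique maximizer $0$.

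\textbf{Case $b>0$.} On $[0,z_1]$, the critical point equation $\mathcal R'=0$ reads $g'(z)B(z)+g(z)F'(z)=0$. Since $g=O(b^2/K)$ and $F'=O(d)$ near the root, the second term is a relative perturbation of size $O(b/d)$ times $g'$'s scale. An implicit-function/monotonicity argument (the left side is strictly decreasing, with derivative $\approx -K$) then gives a unique root $\widehat z=(b/K)(1+O(b/d))$. This root lies below $z_1$ because $b/K\le C/d^2\cdot(c+1)/d\ll 1/\Delta$ by~\eqref{eq:bexact}. Plugging it back gives $\adet=g(\widehat z)/B(\widehat z)=(b^2/2K)(1+O(b/d))$. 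Outside $[0,z_1]$, the same argument as in the case $b\le0$ shows $\mathcal R<0$. Here we use the margin $b\le(c+1+C^2)/d$, which keeps the quadratic term dominant already at $z_1$. This gives global uniqueness.

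\textbf{Refined expansions.} For~\eqref{eq:location} and~\eqref{eq:Drefined}, I would expand $F(z)=(1-b)z+Kz^2/2+T_3z^3/6+U_4z^4/24+O(d^5z^5)$. We then write $\mathcal R=g\cdot(1+F+F^2+\dots)$ with $1-F=B$ and $F=O(dz)$, and solve $\mathcal R'=0$ perturbatively in $u=b/K$. Every term is homogeneous in $d\,u$, so the remainders are $O(d^2u^3)$ for the location and $O(d^5u^5)$ for the value. This step is routine bookkeeping.

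The main obstacle is the global part: controlling $\mathcal R$ uniformly on $[z_1,1)$ (especially $z\asymp1/d$) with constants depending only on $c,C$. I would attack this first via the explicit Binomial representation and a Poisson comparison, checking numerically that the relevant fixed-$dz$ function is bounded away from the diagonal.
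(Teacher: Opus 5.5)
Your local analysis on $[0,1/(8\Delta)]$ is essentially sound and close to the paper's. The numerator $g'B+gF'$ of $\mathcal R'$ equals $B-(1-z)F'$, whose derivative is exactly $-(1-z)F''$, so its strict decrease needs only $F''>0$ there. Note that ``$F''=K(1+O(dz))$'' with $dz\le1/8$ does not by itself give positivity; the paper checks $F''\ge K/2$ on this interval by an explicit per-degree bound. Your Taylor bookkeeping for the refined expansions matches the paper's.

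The gap is the tail $z\ge1/(8\Delta)$, which you rightly call the main obstacle but do not close.
\begin{itemize}
\item The proposed lemma that $F(z)/z$ increases on $(0,1/2]$ is false. For $z\gg1/d$, $F(z)=1-B(z)$ is exponentially close to $1$, so $F(z)/z\approx1/z$ decreases.
\item The claim that $e^{-cdz}(1+dz)$ is ``much less than $1-z$'' once $dz\ge1/8$ is wrong as stated. At $dz=1/8$ the factor is about $0.993$, so the margin is only of order $(dz)^2$, and it needs the exponent constant to exceed $8\log(9/8)$.
\item As $z\to1$ the comparison fails outright: $1-z\to0$ while your bound stays near $e^{-cd}(1+d)>0$. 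Global uniqueness for $b>0$ needs control on all of $[1/(8\Delta),1)$.
\item Your dismissal of the two-mark bound is an algebra slip: $(dz_1)^2\approx1/64\gg z_1\approx1/(8d)$.
\item A numerical check of a fixed-$dz$ profile does not give constants depending only on $c,C$.
\end{itemize}

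The missing idea is to divide by $1-z$ degree by degree. With $\mathcal H=B/(1-z)$ one has $\mathcal R=1-\mathcal H^{-1}$ and $\mathcal H=\E[D\mathcal H_D]/d$, where $\mathcal H_k(z)=(1-z)^{k-3}(1+a_kz)$ and $a_k=(1-\pi)(k-1)-1$.

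The paper differentiates: $\mathcal H_k'=(1-z)^{k-4}[1-\pi(k-1)-(k-2)a_kz]$. This is negative for $z\ge1/(8\Delta)$, since $(k-2)a_k\ge16\Delta$ eventually. Hence $\mathcal R$ strictly decreases on the whole tail, and in particular $B-(1-z)F'<0$ at $1/(8\Delta)$. Combined with the decreasing numerator, this gives exactly one sign change before $1/(8\Delta)$ when $b>0$, and a negative derivative on all of $(0,1)$ when $b\le0$. No separate sign estimate is needed.

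If you prefer your sign-based architecture, the same factorization suffices. Put $t=(k-2)z$; since $a_k\le k-2$, you get $\mathcal H_k\le e^{z-t}(1+t)$. Because $(t-\log(1+t))/t$ increases in $t$, one has $t-\log(1+t)>t/(k-2)=z$ whenever $t\ge1/10$ and $k\ge24$. Eventually $t\ge(\delta-2)/(8\Delta)\ge1/10$, so every $\mathcal H_k<1$. Hence $\mathcal H<1$ and $\mathcal R<0$ on $[1/(8\Delta),1)$, with constants depending only on $c,C$.
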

\begin{proof}
We give finite inequalities before asymptotics to preserve uniformity when
$b$ is extremely small. More generally let the support lie in $[m,D_*]$,
where $m\ge8$, $\pi\le1/4$, and
\[
Q_*=(m-2)((1-\pi)(m-1)-1)\ge16D_*.
\]
These conditions hold under the envelope for all sufficiently large $d$.
For example, it suffices that $d\ge\max\{512,4C^2,c+1\}$.
Let $\ell_*=1/(8D_*)$.
Put $\mathcal H=B/(1-z)$, so $\mathcal R=1-\mathcal H^{-1}$. For one degree
with $a_k=(1-\pi)(k-1)-1$ and $b_k=1-\pi(k-1)$,
\[
\mathcal H_k=(1-z)^{k-3}(1+a_kz),\quad
\mathcal H'_k=(1-z)^{k-4}[b_k-(k-2)a_kz].
\]
Since $a_k>0$, $(k-2)a_k\ge Q_*$ and $b_k\le1$, this derivative is strictly
negative for every $z\ge\ell_*$, $z<1$. Positive degree weights preserve
the sign and exclude all later maxima.

Writing $y=1-z$, direct differentiation yields
\begin{align*}
F'_k&=(k-1)y^{k-3}[\pi y+(1-\pi)(k-2)z]\ge0,\\
F''_k&=(k-1)(k-2)y^{k-4}[(1-2\pi)y-(1-\pi)(k-3)z],\\
F'''_k&=-(k-1)_3y^{k-5}[(2-3\pi)y-(1-\pi)(k-4)z].
\end{align*}
For $0\le z\le\ell_*$, division of $F''_k(z)$ by its value at zero gives
the lower bound
\[
(1-D_*z)[1-(2D_*+1)z]
\ge\tfrac78(\tfrac34-1/(8D_*))>\tfrac12.
\]
The braces in $F'''_k$ are positive there, at least
$(5/4)(1-1/(8D_*))-1/8$. Consequently
\begin{equation}\label{eq:curvfinite}
K/2\le F''(z)\le K,\qquad |F'''(z)|\le L_*:=3D_*^3.
\end{equation}
The quotient derivative has numerator
\begin{equation}\label{eq:g}
\mathcal R'=g/B^2,\qquad g=B-yF',\quad g(0)=b,\quad g'=-yF''.
\end{equation}
Thus $g$ decreases strictly on $[0,\ell_*]$, and the preceding global sign
gives $g(\ell_*)<0$. If $b\le0$, the derivative is negative everywhere
after zero. If $b>0$, exactly one zero $\widehat z$ occurs before $\ell_*$,
and it is the unique global maximum. This proves the qualitative conclusions.

At that zero, $b=\int_0^{\widehat z}(1-t)F''(t)\dd t$. With $u=b/K$,
\[
u\le\widehat z\le2u/(1-\ell_*)<3u,
\quad 0\le K\widehat z-b\le(K+L_*)\widehat z^2/2.
\]
If $\eta_*=(1+L_*/K)u$, these imply
$0\le\widehat z/u-1\le9\eta_*/2$.
For $N(z)=z-F(z)$, Taylor's theorem gives
\[
bz-Kz^2/2\le N(z)\le bz-Kz^2/2+L_*z^3/6.
\]
Since $F\ge0$, $\mathcal R(u)\ge A$. At the maximum $N>0$, hence
$F(\widehat z)<\widehat z<3u$, and completing the quadratic square gives
\[
1\le\adet/A\le\frac{1+9\eta_*}{1-3u}.
\]
The denominator is positive: $b\le1$ and $K\ge21$ imply $u\le1/21$.
If $\eta_*\le1/12$, the relative error is at most $16\eta_*$.
Under the envelope $K\asymp d^2$, $L_*=O(d^3)$, and
$0<b\le(c+1)/d$, so $\eta_*=O(b/d)\to0$. This proves~\eqref{eq:Dleading}
with errors carrying the required powers of $b$ before any division by it.

The derivation of~\eqref{eq:location}--\eqref{eq:Drefined} is given in
Appendix~\ref{app:moments}.
\end{proof}

\section{The actual stopped exploration and its noise}\label{sec:explore}
\subsection{Uniform pairing, latent clocks, and terminal closure}
Temporarily work with a uniform pairing of the $M$ labelled half-edges.
Marks count incident matched edges, with multiplicity. On simplicity this is
exactly the neighbor rule of Section~\ref{sec:model}. Give all original ends
independent rate-one exponential clocks, independently of labels and seeds.
Reserve an active end using only revealed information; pair it to the first
ringing unreserved end, update the partner vertex, and reserve another active
end if available. Memorylessness gives a uniform partner and a waiting time
of rate equal to the number of unreserved ends, independent of the partner.
At stopping complete the remaining matching uniformly. Conditional on any
fixed seed and threshold assignment, induction on pair reveals gives the
uniform full matching. Thus matching and the assignments are independent.
Original clocks need not be independent of the completed matching.

Let $J$ be the real event count. With the initial reservation and a fictitious
last reservation at exhaustion, put $W(0)=M-1$ and $W=M-1-2J$ through the
stopping time $\tau$. Let $X_i(u)$ be the number of original clocks at vertex
$i$ that ring by time $u$, including any clocks on ends later removed manually.
Define for all $u$ the latent function
\begin{equation}\label{eq:latentH}
H(u)=\sum_{i\notin A_0}(d_i-X_i(u))\ind_{\{X_i(u)<R_i\}}.
\end{equation}
This equals the actual inactive-end count through $\tau$: an inactive end is
never manually reserved, and once activation occurs its original clock count
can never fall below its threshold. Afterwards~\eqref{eq:latentH} is only an
auxiliary function and is not asserted to describe a continuing cascade.
The real active queue excluding the reservation therefore satisfies
\begin{equation}\label{eq:stop}
Q=W-H\ge0\quad(u<\tau),\qquad Q(\tau)=-1.
\end{equation}
Indeed a zero queue can continue using its reserved end; a first hit at a
threshold-two vertex may then exhaust it. An active partner is unavailable
when $Q=0$, and can exhaust a queue of one, giving the same value $-1$.
Large-degree activation jumps are positive, so the final value cannot skip
below $-1$. If all ends are exhausted, the negative value is bookkeeping and
is never used as a rate. With no seeds define $\tau=0$ and $T=0$ directly.

At exhaustion every physically unmatched end belongs to an inactive vertex
with fewer than its threshold number of marks from active vertices. Uniform
completion adds edges only between inactive vertices and creates no activation.
Every explored activation is valid, and the resulting set is stable. It is
therefore exactly the bootstrap closure.

Write $z=1-e^{-u}$, $y=e^{-u}$, $\alpha=a/n$, and $m_{A_0}(u)=\E(H(u)\mid A_0)$.
One-vertex expectation and uniform sampling give the exact reference
\begin{equation}\label{eq:Da}
\bar H(u):=\E H(u)=(1-\alpha)ndyB(z),\qquad
D_a(u):=Me^{-2u}-\bar H(u)=ndy[\alpha+(1-\alpha)F(z)-z].
\end{equation}
For example the one-vertex expectation is
$ky^k+(1-\pi)k(k-1)zy^{k-1}=kyB_k(z)$, explaining the degree weight.

\subsection{Exact jumps and predictable bracket}
In the physical revealed-information filtration future clocks of removed
ends are not revealed. Immediately before an event let $A_u$ be the number
of active unreserved ends and let $I_{10},I_{20},I_{21}$ be inactive vertices
of threshold/mark type $(1,0),(2,0),(2,1)$, respectively. The jumps and rates are
\begin{center}
\begin{tabular}{lll}\toprule
Partner type&Rate&Queue jump\\\midrule
Active end&$A_u$ in total&$-2$\\
$i\in I_{20}$&$d_i$&$-1$\\
$i\in I_{10}$&$d_i$&$d_i-2$\\
$i\in I_{21}$&$d_i-1$&$d_i-3$\\\bottomrule
\end{tabular}
\end{center}
Activation with $r$ inactive ends removes all $r$ from $H$, whereas $W$
loses two; this verifies the two activation entries. Thus, before stopping,
\begin{align*}
\beta_Q&=-2A_u-\sum_{I_{20}}d_i+\sum_{I_{10}}d_i(d_i-2)
+\sum_{I_{21}}(d_i-1)(d_i-3),\\
M_Q(u)&=Q(u\wedge\tau)-Q(0)-\int_0^{u\wedge\tau}\beta_Q(s)\dd s
\end{align*}
is a martingale, with jumps bounded by $\Delta$, and exact bracket
\begin{equation}\label{eq:queuebracket}
\langle M_Q\rangle_u=\int_0^{u\wedge\tau}
\left[4A_s+\sum_{I_{20}}d_i+
\sum_{I_{10}}d_i(d_i-2)^2+
\sum_{I_{21}}(d_i-1)(d_i-3)^2\right]\dd s.
\end{equation}
This compensator follows by conditioning over a short time interval, summing
rate times squared jump, and taking the mesh limit; simultaneous events have
probability zero.

If $b>0$, $b\to0$, $a=O(nb^2/K)$, $d/n\to0$, and $r_b=O(b/K)$, the bracket
density through $r_b\wedge\tau$ is $nd^2[1+o_P(1)]$, uniformly in time.
Here is the check. The initial threshold-one sum over all vertices has mean
$\pi\sum_i d_i(d_i-2)^2\sim nd^2$ and variance $O(nd^5)$, hence relative
variance $O(d/n)$. Removing seeds costs at most $a\Delta^3=o(nd^2)$.
Loss of that sum by threshold-one hits through $r_b$ has normalized expected
size $O(dr_b)=O(b/d)$, and the $I_{21}$ sum has normalized expected size
$O(d^2r_b)=O(b)$. Monotonicity controls their whole intervals. The active
and $I_{20}$ terms are at most $5M=o(nd^2)$. Consequently
\begin{equation}\label{eq:bracketequiv}
\langle M_Q\rangle_u=nd^2(u\wedge\tau)[1+o_P(1)]
\quad(0\le u\le r_b),
\end{equation}
where a single random relative error bounds the entire interval. This is a
stopped statement; it does not replace $u\wedge\tau$ by $u$ on extinct paths.

The free-end count has a different, smaller martingale. Since $J$ has
preterminal intensity $W(s-)$,
\begin{equation}\label{eq:WM}
Z_W(u)=e^{2(u\wedge\tau)}W(u\wedge\tau)-(M-1)
=-2\int_0^{u\wedge\tau}e^{2s}(\dd J(s)-W(s-)\dd s).
\end{equation}
For $r\le U:=\log2$, its jumps and bracket obey
\begin{equation}\label{eq:Wbounds}
|\Delta Z_W|\le2e^{2r},\quad
\langle Z_W\rangle_r\le4e^{4r}Mr,\quad
\E\sup_{u\le r\wedge\tau}|W(u)-Me^{-2u}|^2\le CMr+C.
\end{equation}
The last assertion uses the proved Doob inequality in Appendix~\ref{app:prob}.
The exponential estimate there also gives
\begin{equation}\label{eq:Wexp}
\Pp\!\left(\sup_{u\le r\wedge\tau}|W(u)-Me^{-2u}|>e+1\right)
\le2\exp\!\left[-\frac{e^2}{2(4e^{4r}Mr+2e^{2r}e/3)}\right].
\end{equation}
All identities in this subsection use the physical filtration. Independent
latent-clock estimates next use a different filtration; no future clock is
added to the physical filtration to obtain~\eqref{eq:WM}.

\subsection{Weighted first-hit noise and the composition errors}
For the Gaussian-scale estimates in this subsection, assume $b>0$ and
$a=O(nb^2/K)$. For a nonseed degree-$k$ vertex let $Y=(k-X)\ind_{X<R}$. Define nested
indicators $B_i^*=\ind_{\{R_i=1,X_i\ge1\}}$ and
$C_{ji}=\ind_{\{R_i=2,X_i\ge j\}}$, $j=1,2$. Exactly
\begin{equation}\label{eq:loss}
k-Y=kB_i^*+C_{1i}+(k-1)C_{2i}.
\end{equation}
Conditional on the seed set, different vertices are independent. Within a
vertex these indicators are dependent; all variance bounds below apply the
squared triangle inequality to their separate families.
At $z=\Theta(t_b)$, $t_b=b/K$, $dz\to0$, their probabilities give
\begin{align}
\Var\Big(\sum_{i\notin A_0}d_iB_i^*\mid A_0\Big)
&\sim\pi z\sum_i d_i^3\sim nd^2z,\label{eq:leadvar}\\
\Var\Big(\sum_{i\notin A_0}[C_{1i}+(d_i-1)C_{2i}]\mid A_0\Big)
&\le Cn(dz+d^4z^2).\label{eq:restvar}
\end{align}
Indeed $p_{B,i}=\pi[1-(1-z)^{d_i}]=\pi d_i z[1+O(dz)]$,
$\Pp(X_i\ge1)\le d_i z$, and $\Pp(X_i\ge2)\le d_i^2z^2/2$.
Removing $a=o(n)$ indices changes the leading sum by relative $O(a/n)$,
uniformly over the chosen identities. The remainder-to-leading variance
ratio is $O(d^{-1}+b)$, and Cauchy--Schwarz bounds the mixed covariance.

Fixed-size sampling leaves a random sum of seed degrees. With
$g_i(u)=\E Y_i(u)$ its exact variance is
\begin{equation}\label{eq:seedvar}
\Var(m_{A_0}(u))=\frac{a(n-a)}{n-1}\Var_{\mathrm{emp}}(g_i(u)).
\end{equation}
On $z=O(t_b)$ the loss bound in~\eqref{eq:loss} implies
$0\le d_i-g_i\le C(dz+d^3z^2)=o(1)$, hence
$\Var(m_{A_0})=O(ad)=O(nb^2/d)$. Similarly, conditioning on thresholds
before averaging clocks produces mean variance
\[
\pi(1-\pi)\sum_{i\notin A_0}
[d_i(d_i-1)z(1-z)^{d_i-1}]^2=O(nd^3z^2)=O(nb^2/d).
\]
Both are negligible compared with the leading half-edge variance $nb$.
Threshold labels are already included in $B_i^*$; this calculation is not an
extra independent variance to add to it. Free-end noise by~\eqref{eq:Wbounds}
has variance order $nb/d$, also negligible. Dividing the leading half-edge
variance by the squared seed sensitivity $d^2$ identifies
\begin{equation}\label{eq:noise}
w_b=\sqrt{nb/K},\qquad \sigma_b=dw_b,\qquad
\Lambda=nb^3/K.
\end{equation}
The one-point limit and the stopped minimum comparison establishing its
interpretation as an ignition window are proved in Section~\ref{sec:quant}.

\section{Shared maximal and concentration estimates}\label{sec:concentration}
\begin{lemma}[Weighted nested indicators]\label{lem:nested}
Let independent random crossing times $V_i\in[0,\infty]$ have deterministic
real weights $v_i$. For $Z(t)=\sum_i v_i(\ind_{\{V_i\le t\}}-\Pp(V_i\le t))$,
\begin{align}
\E\sup_{t\le r}|Z(t)|^2&\le16\sum_i v_i^2\Pp(V_i\le r),\label{eq:nested}\\
\E\sup_{l\le t\le r}|Z(t)-Z(l)|^2
&\le16\sum_i v_i^2\Pp(l<V_i\le r).\label{eq:nestedinc}
\end{align}
\end{lemma}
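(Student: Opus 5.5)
The natural route is to treat $Z$ as a martingale in $t$ and apply Doob's $L^2$ inequality. If the $V_i$ are independent, the process $N_i(t)=\ind_{\{V_i\le t\}}$ is a single-jump counting process. The compensated process $N_i(t)-\Lambda_i(t)$ is a martingale in the natural filtration $\mathcal F_t=\sigma(\ind_{\{V_j\le s\}}:s\le t,\ j)$. Here $\Lambda_i$ is the cumulative hazard $\int_0^{t\wedge V_i}\dd P_i(s)/P_i(s^-\!,\infty]$. This is not quite $Z$, though: $Z$ subtracts the deterministic $\Pp(V_i\le t)$ rather than the compensator. So I would avoid hazards and use a cruder decomposition that keeps the constant $16$.

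Fix $t$ in a finite grid of $[0,r]$ containing $r$; monotone limits over refining grids recover the supremum, by right-continuity of $Z$. Write $Z=Z^+-Z^-$ with the weights split by sign; the bound for general weights then follows from $(x-y)^2\le2x^2+2y^2$. Alternatively, keep signs and argue directly as follows. For each $i$, the centred process $\xi_i(t)=\ind_{\{V_i\le t\}}-p_i(t)$, with $p_i(t)=\Pp(V_i\le t)$, satisfies
\[
\E\bigl[\xi_i(t)\mid \xi_i(s)\bigr]
=\ind_{\{V_i\le s\}}(1-p_i(t))+\ind_{\{V_i>s\}}\Bigl(\tfrac{p_i(t)-p_i(s)}{1-p_i(s)}-p_i(t)\Bigr).
\]
This is not a martingale, so a direct Doob argument fails. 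Instead I would use a time-reversal or two-sided trick. On the event $\{V_i>s\}$ the process behaves like a martingale divided by $1-p_i(s)$. That suggests splitting $[0,r]$ at the median-type time where $p_i=1/2$, which is awkward with different $i$.

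A cleaner option is a symmetrisation/Lévy-type maximal inequality for sums of independent processes. Let $Z'$ be an independent copy of $Z$. Each summand of $Z-Z'$ is symmetric in distribution as a process, so Lévy's reflection/Ottaviani argument applies along a finite grid. For independent monotone indicator processes, however, there is a simpler route: the map $t\mapsto Z(t)$ is a sum of independent processes with independent structure across $i$, not across time. I would therefore prefer the following. Define the martingale $\widetilde Z(t)=\E[Z(r)\mid\mathcal F_t]$. By independence,
\[
\widetilde Z(t)=\sum_i v_i\bigl(\ind_{\{V_i\le t\}}+\ind_{\{V_i>t\}}q_i(t)-p_i(r)\bigr),
\qquad q_i(t)=\frac{p_i(r)-p_i(t)}{1-p_i(t)}.
\]
Then
\[
Z(t)-\widetilde Z(t)=\sum_i v_i\bigl(p_i(r)-p_i(t)\bigr)\frac{\ind_{\{V_i>t\}}-(1-p_i(t))}{1-p_i(t)}.
\]
This remainder is again a centred independent sum, but it has the reversed structure: it is a martingale in reversed time with respect to the filtration generated by $\{\ind_{\{V_i>s\}}:s\ge t\}$. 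To see this, apply the same construction to $-V_i$. Doob's $L^2$ inequality applied to each piece gives $\E\sup|\cdot|^2\le4\,\E|\text{end value}|^2$. The forward martingale ends at $Z(r)$, whose variance is $\sum v_i^2p_i(r)(1-p_i(r))$. The reversed piece ends at $t=0$, where its variance is bounded by $\sum_i v_i^2p_i(r)^2\,\E(\cdots)^2\le\sum v_i^2 p_i(r)$. Combining the two via $(x+y)^2\le2x^2+2y^2$ yields at most $2\cdot4+2\cdot4=16$ times $\sum v_i^2p_i(r)$, which is \eqref{eq:nested}.

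The main obstacle is verifying the reversed-martingale claim and the variance bound at $t=0$ with the division by $1-p_i(t)$. This needs care when $p_i(t)\to1$ within $[0,r]$, where the factor blows up. I would handle it by noting that the product $(p_i(r)-p_i(t))/(1-p_i(t))\le1$ is bounded, and by restricting to $t$ with $p_i(t)<1$, since terms with $p_i(t)=1$ vanish identically. For \eqref{eq:nestedinc} I would apply \eqref{eq:nested} to the conditional problem on $[l,r]$. Replace $V_i$ by the crossing time after $l$ and weight $v_i\ind_{\{V_i>l\}}$, and also account for the deterministic shift $p_i(t)-p_i(l)$. Equivalently, apply the same forward/reversed decomposition on $[l,r]$ to the increment process. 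Each endpoint variance is then bounded by $\sum v_i^2\Pp(l<V_i\le r)$, giving the same constant $16$.
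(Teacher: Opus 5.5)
There is a genuine gap. The remainder $R(t)=Z(t)-\widetilde Z(t)$ is not a reversed martingale, so Doob cannot be applied to it. Your forward piece $\widetilde Z$ is fine.

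With your own formula (up to a dropped sign, $R(t)=\sum_i v_i q_i(t)\{(1-p_i(t))-\ind_{\{V_i>t\}}\}$), $R$ vanishes at both ends. First, $R(r)=0$, because $Z(r)$ is $\mathcal F_r$-measurable. Second, $R(0)=0$ almost surely whenever the $V_i$ have no atom at $0$.

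A reversed martingale for the decreasing filtration $\mathcal G_t=\sigma(\ind_{\{V_j>s\}}:s\ge t)$ satisfies $R(t)=\E[R(0)\mid\mathcal G_t]$. Your $R$ would therefore vanish identically, and your end-value bound at $t=0$ would assert $\E\sup|R|^2\le0$.

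A concrete counterexample: take one index, $v=1$, $V$ uniform on $[0,1]$, and $r=1$. Then $Z(1)=0$, so $\widetilde Z\equiv0$ and $R=Z$ is the empirical bridge $\ind_{\{V\le t\}}-t$, with $\E R(1/2)^2=1/4$. More generally, when all $p_i(r)=1$ the entire content of the lemma sits in the piece you treat as reversed.

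The genuine reversed martingales here are $\ind_{\{V_i\le t\}}/p_i(t)$, just as the forward ones are $\ind_{\{V_i>t\}}/(1-p_i(t))$. Using either representation for $\ind_{\{V_i\le t\}}-p_i(t)$ forces the per-index median split you yourself found awkward. That split cannot be made simultaneously over $i$ inside a supremum of the sum.

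The missing idea is the symmetrization you mentioned and then set aside; this is the paper's proof.
\begin{itemize}
\item Take ghost copies $V_i'$. Conditional Jensen gives $\E\sup_{t\le r}|Z(t)|^2\le\E\sup_{t\le r}|\sum_iv_i(\ind_{\{V_i\le t\}}-\ind_{\{V_i'\le t\}})|^2$.
\item Swapping each original/ghost pair preserves the joint law, so you may insert independent Rademacher signs $\varepsilon_i$.
\item The inequality $(x-y)^2\le2x^2+2y^2$ then bounds the right side by $4\E\sup_{t\le r}|\sum_i\varepsilon_iv_i\ind_{\{V_i\le t\}}|^2$.
\item Conditional on the crossing times, this process takes only values that are partial sums of the independent centred variables $\varepsilon_iv_i$ with $V_i\le r$, ordered by crossing time. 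Ties only remove some partial sums.
\item Discrete Doob gives $4\sum_{V_i\le r}v_i^2$, and averaging over the crossing times gives the constant $16$.
\end{itemize}
No compensators, hazards, or division by $1-p_i$ are needed. For the increment bound, replace $V_i$ by $\infty$ whenever $V_i\notin(l,r]$ and rerun the same argument from $l$.
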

\begin{proof}
Introduce independent copies $V_i'$. Jensen bounds the first left side by
the squared supremum of $\sum_i v_i(\ind_{V_i\le t}-\ind_{V_i'\le t})$ in
expectation. Swapping each original/ghost pair preserves the joint law and
allows independent Rademacher signs $\varepsilon_i$. The squared triangle
inequality bounds this by four times
$\E\sup_{t\le r}|\sum_i\varepsilon_i v_i\ind_{V_i\le t}|^2$.
Conditional on the crossing times, sort those at most $r$. Their signed sums
are partial sums of independent mean-zero variables; Doob's inequality gives
four times $\sum_{V_i\le r}v_i^2$. Averaging proves the factor sixteen.
Ties only remove some available partial sums. For the second inequality
replace times outside $(l,r]$ by infinity and apply the same argument to
increments from $l$.
\end{proof}

Apply this lemma separately to the three families in~\eqref{eq:loss}.
In the $u$ coordinate,
$p_{B,i}(u)=\pi(1-e^{-d_i u})\le Cu$,
$p_{C_1,i}(u)\le d_i u$, and $p_{C_2,i}(u)\le d_i^2u^2/2$. Therefore,
uniformly over every prescribed seed set,
\begin{equation}\label{eq:lossmax}
\E\!\left(\sup_{u\le r}|H(u)-m_{A_0}(u)|^2\mid A_0\right)
\le Cn(d^2r+dr+d^4r^2).
\end{equation}
The threshold-two centered part alone has bound $Cn(dr+d^4r^2)$.

For a bound uniform in the seed mean, partition vertices into the $q$
nonempty integer degree classes, where $q\le2C\sqrt d+2$. Let $p_k$ be their
empirical masses and $N_k$ the seed class counts. Without-replacement sampling
has
\[
\E N_k=ap_k,\qquad \sum_k\Var(N_k)
=\frac{a(n-a)}{n-1}\sum_k p_k(1-p_k)\le a.
\]
Since $m_{A_0}-\bar H=-\sum_k(N_k-ap_k)g_k(u)$ and $0\le g_k\le\Delta$,
Cauchy--Schwarz gives
\begin{equation}\label{eq:seeduniform}
\E\sup_{0\le u\le U}|m_{A_0}(u)-\bar H(u)|^2\le\Delta^2qa.
\end{equation}
This estimate depends on exact fixed seed size, not independent Bernoulli seeds.
For the smaller X seed scale another useful deterministic bound is
\begin{equation}\label{eq:seeddet}
\sup_{u\le r}|m_{A_0}(u)-\bar H(u)|
\le Ca\sqrt d+Ca(dr+d^3r^2).
\end{equation}
To prove it, decompose $g_i=d_i-(d_i-g_i)$, use
$|\sum_{i\in A_0}d_i-ad|\le Ca\sqrt d$, and bound both the sampled loss sum
and its population proportion by $Ca(dr+d^3r^2)$ using~\eqref{eq:loss}.

\begin{lemma}[Interval concentration]\label{lem:interval}
At fixed $u$, $z=1-e^{-u}$, put
$V(z)=\min\{Cn(d^2z+d^4z^2),n\Delta^2\}$. For any $e>0$,
\begin{align}
\Pp(|H-m_{A_0}|>e\mid A_0)&\le
2\exp\!\left[-\frac{e^2}{2(V(z)+\Delta e/3)}\right],\label{eq:Hbern}\\
\Pp(|m_{A_0}-\bar H|>e)&\le2\exp[-e^2/(2a\Delta^2)].\label{eq:seedexp}
\end{align}
At $a=0,n$ the second discrepancy is zero. On a deterministic interval
$[l,r]\subset[0,U]$ with $l<r$, choose the uniform grid with
$N=\lceil4(r-l)n\Delta^2/e\rceil$ subintervals. It has $N+1$ points
and mesh at most $e/(4n\Delta^2)$. For $l=r$, use the single point $l$. Except with
probability bounded by that many copies of~\eqref{eq:Hbern} and
\eqref{eq:seedexp}, evaluated with variance $V(z(r))$, plus~\eqref{eq:Wexp},
one has
\begin{equation}\label{eq:intervalQ}
|Q(u)-D_a(u)|\le4e+1\quad(l\le u\le r,\ u\le\tau).
\end{equation}
\end{lemma}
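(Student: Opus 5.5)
Conditional on $A_0$, $H(u)-m_{A_0}(u)=\sum_{i\notin A_0}(Y_i-\E Y_i)$ is a sum of independent mean-zero variables. This rests on one fact: for a fixed nonseed vertex, $Y_i=(d_i-X_i)\ind_{\{X_i<R_i\}}\in[0,\Delta]$, and it is a function only of that vertex's own clocks and threshold. So \eqref{eq:Hbern} will be Bernstein's inequality, and the whole task reduces to two checks. First, each centered summand is bounded by $\Delta$. Second, the total variance is at most $V(z)$.

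For the variance I will use \eqref{eq:loss}, which gives $\Var(Y_i)=\Var(d_i-Y_i)$. The squared triangle inequality over the three nested families then bounds this by
\[
3\bigl[d_i^2\Pp(B_i^*=1)+\Pp(C_{1i}=1)+(d_i-1)^2\Pp(C_{2i}=1)\bigr]
\le C(d^2z+dz+d^4z^2),
\]
using $\Pp(B_i^*=1)\le\pi d_iz$ together with $\Pp(X_i\ge1)\le d_iz$ and $\Pp(X_i\ge2)\le d_i^2z^2/2$. The crude bound $\Var(Y_i)\le\Delta^2$ supplies the alternative $n\Delta^2$. Summing over vertices then gives the variance bound $V(z)$.

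For \eqref{eq:seedexp} I will write $m_{A_0}-\bar H=-\sum_k(N_k-ap_k)g_k$, or equivalently $m_{A_0}-\bar H=-\bigl(\sum_{i\in A_0}g_i-a\bar g\bigr)$ plus the deterministic identity. A sample of fixed size $a$ drawn without replacement from values $g_i\in[0,\Delta]$ satisfies Hoeffding's inequality, by his comparison with sampling with replacement. This gives the tail $2\exp[-2e^2/(a\Delta^2)]$, which is stronger than the stated bound. When $a=0$ or $a=n$ the sum is deterministic, so the discrepancy is zero.

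For \eqref{eq:intervalQ} I will use the decomposition, valid for $u\le\tau$,
\[
Q-D_a=(W-Me^{-2u})-(H-m_{A_0})-(m_{A_0}-\bar H).
\]
This uses \eqref{eq:stop} and \eqref{eq:Da}. The first term is at most $e+1$ on the whole interval, outside the event in \eqref{eq:Wexp}. The other two terms I will control at the grid points $u_j$. Off the bad events, each is at most $e$ there; the variance $V(z(u_j))\le V(z(r))$ is justified by monotonicity of $V$ in $z$.

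The main obstacle is interpolating between grid points, because $H$ jumps. I will use monotonicity: $X_i(u)$ is nondecreasing, and one checks that $Y_i(u)$ is nonincreasing in $u$. So $H$ is nonincreasing, and for $u\in[u_j,u_{j+1}]$,
\[
H(u_{j+1})-m(u_j)\le H(u)-m(u)\le H(u_j)-m(u_{j+1}).
\]
It then suffices that the deterministic and sampled means vary by at most $e$ across a mesh. Their derivatives are bounded by $n\Delta^2$: each $g_i'$ and $\bar H'$ is bounded by $d_i\cdot(\text{rate}\le d_i)$. With mesh at most $e/(4n\Delta^2)$, the change is at most $e/4$. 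Hence $|H-m_{A_0}|\le 2e$ and $|m_{A_0}-\bar H|\le e$ hold uniformly, possibly after slightly redistributing constants. Adding the bound $e+1$ on the $W$ term gives $4e+1$. Finally, a union bound over the $N+1$ grid points for both tails, plus the $W$ event, yields the stated probability.
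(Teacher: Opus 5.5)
Your proposal is correct and follows the paper's proof: conditional Bernstein for the independent vertex terms with per-vertex variance from the loss identity~\eqref{eq:loss}, an exponential tail for the sampled seed mean, monotone bracketing across the mesh $e/(4n\Delta^2)$ using the rate bound $n\Delta^2$ for the means, and the free-end event~\eqref{eq:Wexp}. The differences are minor. For~\eqref{eq:seedexp} you import Hoeffding's without-replacement comparison, which gives the sharper exponent $2e^2/(a\Delta^2)$, whereas the paper stays self-contained with its Azuma bound on the exposure martingale with increments $(n-a)(g_{I_j}-\bar g_{j-1})/(n-j)$. You also bracket $H-m_{A_0}$ and $m_{A_0}-\bar H$ separately rather than $H-\bar H$ jointly, where the paper gets $9e/4$. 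Off the grid your second term is only bounded by $3e/2$, not $e$, but the first is at most $5e/4$, so the total still stays below $4e+1$.
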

\begin{proof}
The loss identity and the elementary hit bounds show
$\Var(Y_i)\le3(\pi d_i^3z+d_i z+d_i^4z^2/2)$.
Conditional vertex independence and the Bernstein inequality proved in the
appendix give~\eqref{eq:Hbern}. For the seed mean, expose sampled labels in
random order. The sampled-sum Doob increment on draw $j$ equals
$(n-a)(g_{I_j}-\bar g_{j-1})/(n-j)$; its absolute value is at most $\Delta$.
The proved Azuma estimate gives~\eqref{eq:seedexp}.

Both $H$ and $\bar H$ are nonincreasing. Each one-vertex mean changes at rate
at most $d_i^2$, so $|\bar H'|\le n\Delta^2$.
If the grid deviations are at most $2e$, monotone bracketing between adjacent
grid times gives a deviation at most $9e/4$ everywhere, including across
jumps. Combine this with the free-end deviation $e+1$ from~\eqref{eq:Wexp}
and the identity through stopping to obtain~\eqref{eq:intervalQ}.
\end{proof}

\paragraph{A common dyadic bound.}
Write $r_D\le2C\sqrt d+2$ for the number of degree classes. Suppose
$0<h\le H\le\eta/d$ and $D_a(u)\ge cndKu^2$ on $[h,H]$, for fixed
$c,\eta>0$. For each dyadic interval $[v,\min(2v,H)]$, the loss,
seed-mean and stopped free-end maximal squared bounds sum to
$C[nd^2v+ndv+nd^4v^2+\Delta^2r_Da+1]$. Markov's inequality with a
fixed fraction of the margin $ndKv^2$ gives interval failure at most
\begin{equation}\label{eq:commonDyadic}
\mathcal D(v):=C\left[\frac{1+1/d}{nK^2v^3}
+\frac{d^2}{nK^2v^2}+\frac{r_Da}{n^2K^2v^4}
+\frac1{n^2d^2K^2v^4}\right].
\end{equation}
Otherwise a stop in that interval contradicts $Q(\tau)=-1$.
Summing $v=2^jh$ uses only the geometric series with exponents $2,3,4$;
thus $\Pp_{\Conf}(\tau>h,\tau\le H)\le C\mathcal D(h)$, with no
number-of-interval factor. If seed discrepancy is instead bounded
deterministically by $o(ndKh^2)$ through $2H$, absorb it into the margin
and omit the third term. Every comparison is through stopping.

\section{Enumeration and adaptive transfer}\label{sec:transfer}
The following reduced enumeration estimate follows from Lemma 5.1 of
\citet{mckay}. We state the estimate and give the reduction explicitly.

\paragraph{Enumeration input E.}
For nonnegative integer degrees $r_i$ of even total $L\to\infty$,
maximum $1\le\Delta_r=o(L^{1/3})$, and
$\nu=L^{-1}\sum_i(r_i)_2$, the probability $S_r$ that a uniform
labelled pairing is simple satisfies uniformly
\begin{equation}\label{eq:enuminput}
\log\Pp(S_r)=-\nu/2-\nu^2/4+O(\Delta_r^4/L).
\end{equation}
Zero degrees are permitted. The error constant is uniform over the
residual arrays in this domain, and remains symbolic in finite budgets.

To identify the precise consequence used, write $L_j=\sum_i(r_i)_j$.
The cited primary lemma, under $1\le\Delta_r^3=o(L)$, gives
\[
\log\Pp(S_r)=-\frac{L_2}{2L}-\frac{L_2^2}{4L^2}
-\frac{L_2^2L_3}{2L^4}+\frac{L_2^4}{4L^5}
+\frac{L_3^2}{6L^3}+O(\Delta_r^3/L).
\]
Since $L_j\le L\Delta_r^{j-1}$, each of the three additional terms is
$O(\Delta_r^4/L)$ and the original remainder is absorbed when
$\Delta_r\ge1$. Removing degree-zero vertices changes neither these
sums nor the pairing probability. This proves the short reduction to E;
the external enumeration proof itself is the stated borrowed dependency.

In our applications $L\sim nd$, $\Delta_r=O(d)$ and $d^3/n=o(1)$. Hence
$\Delta_r^3/L=O(d^2/n)=o(1)$ and its error is $O(d^3/n)=o(1)$, uniformly
including residual sequences with zero degrees. For the initial sequence,
writing $q=\sum_i(d_i)_2/M=O(d)$, this gives
\begin{equation}\label{eq:simpleprob}
\log s=-q/2-q^2/4+O(d^3/n),\qquad
s=\Pp(S)\ge e^{-C_sd^2}.
\end{equation}
In particular the ensemble is nonempty eventually. Conditioning a labelled
pairing on simplicity gives the uniform simple graph, since every such
graph has $\prod_i d_i!$ labelled pairings. The matching independence already
proved preserves the specified seed/threshold law after conditioning. It
does not preserve independent latent clocks.

\begin{lemma}[Prescribed-degree adaptive transcript transfer]\label{lem:TV}
An adaptive procedure reveals at most $m$ matched pairs, choosing each first
end from already revealed information and independent labels or innovations.
If $m=o(n/d)$ and $d^3/n=o(1)$, its complete capped transcript $\mathcal T_m$
satisfies
\begin{equation}\label{eq:TV}
\TV(\mathcal L(\mathcal T_m\mid S),\mathcal L(\mathcal T_m))
=O(md/n+d^3/n).
\end{equation}
The statement includes independent waiting-time innovations and procedures
that stop the physical cascade and continue only pair revelation to the cap.
\end{lemma}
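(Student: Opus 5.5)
We compare the two transcript laws through the likelihood ratio. Let $\tau$ range over possible capped transcripts; each one fixes a set $P_\tau$ of at most $m$ revealed pairs and leaves a residual degree sequence $r^\tau$ on $L_\tau=M-2|P_\tau|$ half-edges. Labels, innovations and waiting times are independent of the pairing, so they do not enter the ratio. Under the unconditioned uniform pairing, the remaining matching given $\tau$ is a uniform pairing of the residual ends, whatever rule chose the first ends. This is the same memoryless argument used in Section~\ref{sec:explore}. Simplicity $S$ holds exactly when the revealed pairs create no loop or multiple edge, the residual pairing is simple, and no residual pair duplicates a revealed edge. Hence
\[
\frac{\dd\mathcal L(\mathcal T_m\mid S)}{\dd\mathcal L(\mathcal T_m)}(\tau)
=\frac{\ind_{G_\tau}\,\Pp(S_{r^\tau}\cap N_\tau)}{s},
\]
where $G_\tau$ means the revealed pairs are simple and $N_\tau$ is the no-duplication event. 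Then $\TV=\E|1-\text{ratio}|$, and it suffices to show that the ratio equals $1+O(md/n+d^3/n)$ outside a set of transcripts of that probability.

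\textbf{Step 1: residual simplicity.} Apply Enumeration input E to the residual sequence; zero degrees are allowed there. Since $\Delta_r\le\Delta=O(d)$ and $L_\tau\sim nd$ because $m=o(n/d)$, we get $\log\Pp(S_{r^\tau})=-\nu_\tau/2-\nu_\tau^2/4+O(d^3/n)$. Removing $2m$ ends lowers each $r_i$ by its number of revealed ends. So $\sum_i(r_i)_2$ differs from $\sum_i(d_i)_2$ by at most $2m\cdot2\Delta$, and $L$ changes by $2m$. This gives $|\nu_\tau-q|=O(md/(nd)\cdot d)=O(m/n)$, and since $q=O(d)$, also $|\nu_\tau^2-q^2|=O(md/n)$. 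Comparing with~\eqref{eq:simpleprob} yields $\Pp(S_{r^\tau})/s=\exp[O(md/n+d^3/n)]$ uniformly in $\tau$.

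\textbf{Step 2: no duplicates.} A fixed revealed pair $\{i,j\}$ is duplicated by the residual pairing with probability at most $r_ir_j/L=O(d/n)$. This bound must also hold conditionally on $S_{r^\tau}$. For that, I would use a switching argument, or apply E to the sequence with $r_i,r_j$ reduced by one, which reproduces the $O(d^2/(nd))$ estimate. A union bound over at most $m$ pairs gives $\Pp(N_\tau^c\mid S_{r^\tau})=O(md/n)$.

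\textbf{Step 3: revealed pairs.} The event $G_\tau^c$ needs some revealed pair to be a loop or to repeat an earlier pair. Each reveal joins a chosen end to a uniform remaining end. That end lies at the current vertex, or at a vertex already joined to it, with probability $O(d/(nd))$ per such vertex; each vertex has at most $\Delta$ earlier revealed neighbours. So $\Pp(G_\tau^c)=O(md/n)$, and this holds under adaptive choices, since the bound is per-step conditional.

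Putting the steps together, $\TV\le\Pp(G^c)+\E|1-\Pp(S_{r}\cap N)/s|$, and this is $O(md/n+d^3/n)$. Stopping the cascade and continuing only pair revelation up to the cap changes nothing, because the argument uses only the fact that each first end is revealed-measurable.

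The main obstacle is Step 2. Duplication has to be controlled conditionally on residual simplicity, whose probability is only $e^{-\Theta(d^2)}$, so an unconditional bound is useless. The clean route is the second application of E to the shifted degree sequence: the ratio of the two simplicity probabilities is $1+O(d/n\cdot d)$, which keeps the duplication probability at order $d/n$ per pair.
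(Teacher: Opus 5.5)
Your proposal is correct and follows essentially the same route as the paper: the density $\ind_{G_\tau}\Pp(S_{r^\tau}\cap N_\tau)/s$, Input E applied to the residual sequence and compared with $s$ via $|q-q_r|=O(m/n)$, a bound on duplicating exposed edges conditional on residual simplicity, and a per-step forbidden-partner bound $(\Delta+\Delta^2)/(\ell-1)$ for nonsimple histories. One small correction concerns your double-E variant of Step~2. The two enumeration remainders $O(\Delta^4/L)$ do not cancel, so the ratio of the simplicity probabilities for the lowered sequence and for $r^\tau$ is only $e^{O(d^3/n)}$, not $1+O(d^2/n)$. The paper instead uses the switching bound $r_ur_v/(\ell-8\Delta^2)$, but a bounded ratio is all you need for the $O(d/n)$ per-pair estimate, so your conclusion stands.
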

Write the uniform constant in Input E as $A_E$: within its eventual range,
for the initial sequence and every residual sequence of total
$\ell\ge M/2$ and maximum at most $\Delta$, put $q_r=\ell^{-1}\sum_i(r_i)_2$. Then
\begin{equation}\label{eq:qEnum}
\left|\log\Pp(S_r)+q_r/2+q_r^2/4\right|
\le A_E\Delta^4/\ell.
\end{equation}
This just assigns a name to the constant in~\eqref{eq:enuminput}; it changes
neither the borrowed statement nor its source. The constant $A_E$ is kept symbolic here. The bounds below therefore
expose parameter dependence but do not provide a numerically evaluated
finite-$n$ certificate.

For a cap $m$, put $\ell=M-2m$ and assume $\ell\ge M/2$ and
$\ell>8\Delta^2+1$. Define
\begin{align}
e_m&=6m\Delta(1+\Delta)/M+3A_E\Delta^4/M,\nonumber\\
f_m&=\frac{m\Delta^2}{\ell-8\Delta^2},\qquad
\beta_m=\frac{m(\Delta+\Delta^2)}{\ell-1},\nonumber\\
\mathcal V_m&=\min\{1,\ \beta_m+e^{e_m}-1+f_m\}.
\label{eq:qTVfinite}
\end{align}
Then the capped transcript in Lemma~\ref{lem:TV} satisfies
\begin{equation}\label{eq:qTVresult}
\TV(\mathcal L(\mathcal T_m\mid S),\mathcal L(\mathcal T_m))
\le\mathcal V_m.
\end{equation}
\begin{proof}
Condition on all independent external labels and innovations. Sequential
uniform partners leave a uniform residual pairing, even with adaptive
first-end selection. For a simple history of exactly $m$ pairs, let $k_i$
be the used ends, $r_i=d_i-k_i$, and $L=M-2m=\ell$. Exact subtraction gives
\begin{equation}\label{eq:qdiff}
q-q_r=\frac{\sum_i k_i(2d_i-k_i-1)-2mq}{L},\qquad
|q-q_r|=O(m/n).
\end{equation}
Indeed $\sum_i k_i=2m$, $k_i\le\Delta$, $q,q_r\le\Delta$; the numerator
is at most $6m\Delta$, so $|q-q_r|\le12m\Delta/M$. Its sign need not
be positive. The difference of $-q/2-q^2/4$ and its residual version
costs at most $6m\Delta(1+\Delta)/M$, and the two enumeration remainders
cost at most $3A_E\Delta^4/M$, totaling $e_m$.

Residual simplicity must exclude copies of exposed edges. For such an edge
$uv$, switch $uv,xy$ to $ux,vy$ in a simple residual graph, with four
distinct endpoints and both new edges absent. Of its $\ell$ oriented edges
$xy$, fewer than $8\Delta^2$ are excluded by incidence to $u,v$, by
$x\in N(u)$ or by $y\in N(v)$. A reverse switch has at most $r_ur_v$
choices. Double counting yields
\[
\Pp(uv\in E(G_r)\mid S_r)\le\frac{r_ur_v}{\ell-8\Delta^2}
\le\frac{2\Delta^2}{\ell}
\]
eventually in the asymptotic regime; zero residual degrees give zero.
The union over exposed edges is bounded by $f_m$. Thus the conditioning
density on a simple history lies between
$\max\{0,e^{-e_m}(1-f_m)\}$ and $e^{e_m}$. On a nonsimple history it
is zero. At each reveal at most $\Delta+\Delta^2$ partners are forbidden,
from a pool of at least $\ell-1$; hence such a history has probability
at most $\beta_m$. Integrating half the absolute density deviation proves
\eqref{eq:qTVresult}. Since $\Delta=O(d)$ and $M=nd$,
\begin{equation}\label{eq:qTVorder}
\mathcal V_m\le C_E(md/n+d^3/n)
\end{equation}
when these asymptotic parameters are small, proving~\eqref{eq:TV}.
The density is then $1+O_E(md/n+d^3/n)$ on every simple history.
Fewer than $m$ reveals can be extended predictably to exactly $m$;
average over the external information to finish. Only physical transcripts,
not the unrestricted latent clock field, are transferred.
\end{proof}

Use independent $E_j\sim\operatorname{Exp}(1)$ and waiting times $E_j/W_j$,
$W_j=M-1-2j$, together with independent uniform partners. Include the next
waiting innovation after the final revealed pair in the cap. Whether the
next pairing precedes a prescribed time is then known without its partner.
After physical stopping only the matching transcript is extended, not the
cascade. Thinning a rate-$M$ Poisson process with probability $W/M$
dominates the physical count, whose intensity is at most $M$.

For a physical horizon $u_*$ and $m>Mu_*$, define
\begin{equation}\label{eq:qOverflow}
\mathcal O(m,Mu_*)=
\exp\{-m\log[m/(Mu_*)]+m-Mu_*\}.
\end{equation}
It is zero if $Mu_*=0$. The physical count has intensity at most $M$,
and Poisson exponential compensation, optimized at
$\theta=\log[m/(Mu_*)]$, proves this Chernoff bound. A cap $m\ge4Mu_*$
has overflow at most $e^{-km}$. The next waiting innovation makes overflow
recognizable without the next partner. An event recognizable by $u_*$
is replaced by a capped event only on overflow; its capped probability
costs one TV bound, and simple overflow costs at most its configuration
probability plus one TV bound. Thus a conservative physical-event transfer
bound is
\begin{equation}\label{eq:qEventTransfer}
3\mathcal V_m+2\mathcal O(m,Mu_*).
\end{equation}

\section{A tunable physical horizon and terminal passage}\label{sec:range}
This section proves the common extension needed for B and X. Put
\begin{equation}\label{eq:rangeparameters}
\begin{gathered}
J=d^2+\log(n+2),\qquad \xi=KJ/n,\qquad \theta=\xi^{1/6},\\
h_0=\theta/K,\qquad q_0=h_0/u_0=\theta/b_0,\qquad
\varepsilon=(J/n)^{1/4}.
\end{gathered}
\end{equation}
The subscript on $q_0$ distinguishes this scaled horizon from excess
degree and degree-class counts elsewhere in the text.

\begin{lemma}[Horizon feasibility]\label{lem:rangehorizon}
Under the envelope, $d^4/n\to0$ is equivalent to $KJ/n\to0$.
It permits a horizon $q_0u_0$ with small transcript TV and late vertex
exponent larger than both the simplicity cost and the mesh cost:
$q_0b_0\to0$ and $q_0^3/J\to\infty$. The choice above satisfies
\begin{equation}\label{eq:rangetuning}
q_0b_0=\theta\to0,\qquad
q_0^3/J=\xi^{-1/2}\to\infty,\qquad
\Psi:=nK^2h_0^3=\sqrt{nJ/K},\qquad \Psi/J=\xi^{-1/2}\to\infty,
\end{equation}
and $d^2h_0=o(1)$, $ndh_0\to\infty$,
$ndKh_0^2=Gq_0^2\to\infty$, $n\varepsilon^2=\sqrt{nJ}\gg J$.
The hypotheses of B imply all these facts without a separate degree bound.
\end{lemma}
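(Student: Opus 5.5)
This is a direct algebraic verification. Substituting the definitions in \eqref{eq:rangeparameters} and \eqref{eq:Xscales} reduces every claim to a power of the single small parameter $\xi$, together with the envelope relation $K\asymp d^2$ from Theorem~\ref{thm:D}.

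\textbf{Equivalence.} By the envelope, $K/d^2=1+O(d^{-1})$. Hence $KJ/n\asymp d^2(d^2+\log(n+2))/n$.

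If $d^4/n\to0$, then also $d^2\log n/n\to0$, because $d^2\le n^{1/2}$ eventually. So $\xi\to0$.

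Conversely, $\xi\ge K d^2/n\asymp d^4/n$, so $\xi\to0$ forces $d^4/n\to0$.

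\textbf{Identities.} From $b_0=a_0^{-1}=(K/n)^{1/3}$ and $u_0=b_0/K$ we get $q_0=h_0/u_0=\theta/b_0$. Therefore $q_0b_0=\theta=\xi^{1/6}\to0$.

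Next, $q_0^3=\theta^3 n/K=\xi^{1/2}n/K$. Dividing by $J$ and using $J=\xi n/K$ gives $q_0^3/J=\xi^{-1/2}$.

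Similarly $\Psi=nK^2h_0^3=n\theta^3/K=\xi^{1/2}n/K=\sqrt{\xi}\,n/K$. Writing $\xi=KJ/n$, this equals $\sqrt{nJ/K}$. Then $\Psi/J=\xi^{-1/2}$ follows as before.

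\textbf{Remaining bounds.} Since $K\asymp d^2$, we have $d^2h_0\asymp\theta\to0$.

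For the next bound, $ndh_0=nd\theta/K\asymp(n/d)\xi^{1/6}$. Also $\xi\ge d^4/n$ up to constants, so $\xi^{1/6}\gtrsim d^{2/3}n^{-1/6}$. This gives $ndh_0\gtrsim n^{5/6}d^{-1/3}\to\infty$, using $d\le n^{1/4}$.

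The identity $ndKh_0^2=nd\theta^2/K$ can be rewritten through $G=da_0$ and $q_0=\theta a_0$. Indeed $Gq_0^2=d\theta^2a_0^3=d\theta^2 n/K$, which matches. It diverges because $n\theta^2/K\ge n\theta^3/K=\Psi\to\infty$, using $\theta\le1$ and $\Psi\ge J\xi^{-1/2}\to\infty$.

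Finally, $n\varepsilon^2=n(J/n)^{1/2}=\sqrt{nJ}$. Its ratio to $J$ is $\sqrt{n/J}\to\infty$, since $J=o(n)$ under $d^4/n\to0$.

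\textbf{B hypotheses.} For the last sentence, I would show that $\Lambda=nb^3/K\to\infty$ forces $d^4/n\to0$.

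Since $0<b\le(c+1)/d$ by \eqref{eq:bexact} and $K\asymp d^2$, we get $\Lambda\le C_1n/d^5$. So $\Lambda\to\infty$ gives $n/d^5\to\infty$, and hence $d^4/n\to0$. The equivalence then applies.

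The only mildly delicate step is the $\log(n+2)$ part of $J$. It is handled by $d^2\log n/n\le n^{-1/2}\log n$, which holds eventually because $d^4\le n$. No further obstacle is expected.
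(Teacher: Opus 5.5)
Your proposal is correct and follows essentially the paper's route: direct substitution of the scale definitions, the two-way comparison of $d^4/n$ with $KJ/n$ via $J\ge d^2$ and $d^2=o(\sqrt n)$, and the envelope bound $\Lambda\le C_1n/d^5$ for the B hypotheses. The paper's proof also records the converse that any horizon with $q_0b_0\to0$ and $q_0^3/J\to\infty$ forces $b_0^3J=KJ/n\to0$ (multiply $(q_0b_0)^3$ by $J/q_0^3$), which Proposition~\ref{prop:routebarrier} later uses; it also obtains $ndh_0\to\infty$ from the identity $ndh_0=da_0^2q_0$ rather than from your power count.
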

\begin{proof}
The two required inequalities have a solution only if
$b_0^3J\to0$: multiply $(q_0b_0)^3\to0$ by
$J/q_0^3\to0$. Conversely substitution of~\eqref{eq:rangeparameters}
gives~\eqref{eq:rangetuning}, since $b_0^3=K/n$.
As $K\asymp d^2$ and $J\ge d^2$, $KJ/n\to0$ implies
$d^4/n\to0$. In the other direction $d^2=o(\sqrt n)$, so
$d^2\log(n+2)/n=o(\log(n+2)/\sqrt n)=o(1)$; hence $KJ/n\to0$.
In particular $a_0\to\infty$, $b_0\to0$, $q_0\to\infty$, and
$d^2h_0=(d^2/K)\theta\to0$. The scale identities give
$ndh_0=da_0^2q_0\to\infty$ and $ndKh_0^2=Gq_0^2\to\infty$.
Also $J/n\to0$ and $n\varepsilon^2=\sqrt{nJ}\gg J$.

For B, the envelope identity~\eqref{eq:bexact} gives
$0<b\le(c+1)/d$. Therefore
$\Lambda=nb^3/K\le C_1n/d^5$. Its divergence forces
$n/d^5\to\infty$, which implies $d^4/n\to0$.
At $u_L=-\log(1-16b/K)$, $d^2u_L=O(b)=o(1)$.
Thus $h_*:=\max(u_L,h_0)$ has $d^2h_*=O(b+\theta)=o(1)$.
The cap $m=\lceil4Mh_*\rceil$ satisfies
$m/n=O(dh_*+1/n)=o(1)$ and
$md/n=O(d^2h_*+d/n)=o(1)$. Its residual total is at least $M/2$
and greater than $8\Delta^2+1$ eventually. Input E has domain cost
$\Delta^3/(M-2m)=O(d^2/n)=o(1)$ and error $O(d^3/n)=o(1)$.
These statements also hold for X with $h_*=h_0$.
\end{proof}

\begin{lemma}[Common late passage and terminal equivalence]
\label{lem:rangefinal}
Assume $d^4/n\to0$. Consider either X comparison sizes
$a\le Xa_0$, $|\lambda|\le L$, $h_*=h_0$, or B comparison sizes
$a\le2nb^2/K$, $b>0$, $\Lambda\to\infty$,
$h_* =\max(u_L,h_0)$. Set $E_* =\{\tau>h_*\}$ and
$H_* =\Psi$ in X, $H_* =\max(\Psi,\Lambda)$ in B. Then
\begin{align}
\Pp_{\Simple}(E_*,\tau\le U)
&\le e^{C_sd^2}\{Cn^2e^{-kH_*}+Cd e^{-kn}\}=o(1),
\label{eq:rangelate}\\
\Pp_{\Simple}(T_a\ge n/2)-\Pp_{\Simple}(E_*)&\to0.
\label{eq:rangeequiv}
\end{align}
On late survival, outside an error at most
$4\exp(C_sd^2-k\sqrt{nJ})$ and~\eqref{eq:rangelate},
\begin{equation}\label{eq:rangeterminal}
n-T_a\le Cn\{d2^{-d+C\sqrt d}+\varepsilon\}=o(n).
\end{equation}
The constants can depend on the fixed compact bounds. No particular
$O(n^{2/3})$ residue is asserted in this enlarged range.
\end{lemma}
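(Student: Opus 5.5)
The plan is to prove \eqref{eq:rangelate} in the configuration model and then pay for simplicity with the crude factor $1/s\le e^{C_sd^2}$ from \eqref{eq:simpleprob}. This works only because every configuration error term is exponentially small in a quantity $H_*\gg d^2$. After that, \eqref{eq:rangeequiv} and \eqref{eq:rangeterminal} follow from what the explored cascade looks like at the physical time $U=\log2$ and from what happens after it.

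\emph{Step 1: the late window $(h_*,U]$.} On $[h_*,U]$ we need a lower bound on the reference drift $D_a(u)$ from \eqref{eq:Da}. For $u\le\eta/d$ I would use Theorem~\ref{thm:D}. It gives $z-F(z)\ge bz-Kz^2/2$ up to cubic terms, and since $\alpha\ge0$ this yields $D_a(u)\ge ndy[(1-\alpha)(F-z)\dots]$; with the sign corrected, $D_a\gtrsim ndKu^2$ once $u\ge h_*$. In regime X this holds because $h_0=\theta/K\gg |b|/K$, as $q_0\to\infty$ while $|\lambda|$ stays bounded. In regime B it holds because $u\ge u_L\approx16b/K$, well past the root $2b/K$ of $bz-Kz^2/2$. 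The seed term only helps.

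For $u\in[\eta/d,U]$ I would use that $z-F(z)$ is bounded below by a positive constant times $\min(Kz^2,1)$. The reason is that $F(z)\le$ the probability of a threshold-one or double hit, which is $O(z+d^2z^2\cdot\text{small})$; this needs a careful but routine check, uniform over the envelope. So the margin is at least $c\,ndKv^2$ on dyadic blocks near $h_*$ and at least $c\,nd\min(1,Kv^2)$ further out.

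On each dyadic block I would apply Lemma~\ref{lem:interval} with $e$ a fixed fraction of the margin, in place of the Markov bound behind \eqref{eq:commonDyadic}. The Bernstein exponent is then of order $e^2/(V+\Delta e)$, which near $h_*$ is $\gtrsim nK^2v^3\ge\Psi$. In B the margin around $u_L$ also gives an exponent $\gtrsim nb^3/K=\Lambda$, which is why $H_*=\max(\Psi,\Lambda)$ there. Far out the exponent is $\gtrsim n$. The seed and free-end exponents are larger. The grid cardinality $N=O(n\Delta^2/e)$ is polynomial in $n$, and summing over $O(\log n)$ dyadic blocks gives $Cn^2e^{-kH_*}$. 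The $Cde^{-kn}$ term covers the blocks where the margin is of order $nd$. A stop inside a block contradicts $Q(\tau)=-1$ together with \eqref{eq:intervalQ}.

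Multiplying by $e^{C_sd^2}$ gives \eqref{eq:rangelate}. The right side is $o(1)$ because Lemma~\ref{lem:rangehorizon} gives $\Psi/J\to\infty$ with $J\ge d^2+\log n$; this is exactly where $J$ absorbs both the simplicity cost and the $n^2$ mesh factor.

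\emph{Step 2: beyond $U$ and terminal size.} On $\{\tau>U\}$, at time $U$ we have $z=1/2$. Each nonseed vertex is then inactive with conditional probability at most $(1-\pi)$ times the probability of at most one ring among $d_i$ clocks, which is $\le d2^{-d+C\sqrt d}$. Concentration via \eqref{eq:Hbern} with $e=n\varepsilon$ gives an exponent of order $n^2\varepsilon^2/(n\Delta^2)$, which I would bound below by $\sqrt{nJ}$ after using the $\min$ in $V$. This yields the error $4\exp(C_sd^2-k\sqrt{nJ})$ and the bound \eqref{eq:rangeterminal}. Since $T_a$ counts activated vertices, the count at $U$ already lower-bounds $T_a$ regardless of what the cascade does later.

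\emph{Step 3: \eqref{eq:rangeequiv}.} On $E_*$, Step 1 shows $\tau>U$ with high probability, and Step 2 then gives $T_a\ge n-o(n)\ge n/2$. Conversely, on $E_*^c$ the process stops by $h_*$. Then $T_a\le a+J(h_*)$, and the pair count is at most $4Mh_*$ outside an overflow event of the form \eqref{eq:qOverflow}. That gives $T_a=O(ndh_*)+a=o(n)$ by Lemma~\ref{lem:rangehorizon}, since $dh_*\to0$ and $a=o(n)$. These bounds are configuration bounds on a recognisable-by-$h_*$ event, so I would transfer them with \eqref{eq:qEventTransfer}.

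The \textbf{main obstacle} is Step 1 near $h_*$. The margin $D_a$ must be of order $ndKu^2$ while the variance $V\approx nd^2u$ is comparable, so the exponent $nK^2u^3$ barely exceeds $d^2$. Everything depends on the tuning $\Psi/J\to\infty$, and on using the deterministic lower bound for $D_a$ uniformly even in B, where $b$ may be tiny or relatively large.
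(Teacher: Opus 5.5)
Your architecture matches the paper's. You use exponential interval bounds from Lemma~\ref{lem:interval} on dyadic blocks beyond $h_*$, with tolerance a fixed fraction of the margin and exponents $\gtrsim nK^2h_*^3\ge kH_*$. The polynomial grid factor and $e^{C_sd^2}$ are absorbed through $\Psi/J\to\infty$. You take a terminal count at $U$, and you use total-variation transfer (not simplicity division) for early stops, where $T_a\le a+m$. However, one step fails as written and two others are not justified.

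\textbf{Terminal concentration.} Take $e=n\varepsilon$ in \eqref{eq:Hbern} at $z=1/2$. The variance entry is $V(1/2)=n\Delta^2$; the minimum is attained by that term, so it does not help. The exponent is then $\asymp n\varepsilon^2/\Delta^2=\sqrt{nJ}/\Delta^2$, not $\sqrt{nJ}$. When $J\asymp d^2$, this beats the factor $e^{C_sd^2}$ only if $n\gg d^6$. So it fails, for example, at $n\asymp d^5$ in X, or at $n\asymp d^{11/2}$ with $b\asymp d^{-1}$ in B.

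The fix is to measure the tolerance in end units as $\varepsilon nd$. Hoeffding with ranges $d_i$, plus Azuma for the seed mean, then gives exponent $\asymp n\varepsilon^2=\sqrt{nJ}$. You also need the end-to-vertex link you omitted: on real survival an inactive vertex has at most one mark, hence at least $\delta-1$ unpaired ends, so the inactive count is at most $H(U)/(\delta-1)$. Alternatively, apply Hoeffding directly to the range-one latent indicators $\ind_{\{X_i(U)<R_i\}}$. Their mean is $2^{-d_i}[1+(1-\pi)d_i]$ for every seed set. Your ``$(1-\pi)$ times $\Pp(X_i\le1)$'' drops the threshold-one, zero-ring term.

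\textbf{Margin on $[\eta/d,U]$.} Your stated inequality has the wrong sign, and the justification runs backwards. Positivity of $D_a$ needs $F(z)-z$ bounded \emph{below}. No upper bound $F=O(z+\cdots)$ can give that, and for $z\ge\eta/d$ the double-hit term $d^2z^2$ is not small. Moreover, the Taylor control $|F'''|\le3\Delta^3$ is only proved up to $1/(8\Delta)$.

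The paper argues as follows. The quadratic bound gives a fixed $q>0$ with $F-z\ge q$ at $z=\eta/d$. Every $F_k'\ge0$, so $F$ is nondecreasing, which keeps $F-z\ge q/2$ up to $z\approx q/2$. On $[q/2,1/2]$, the bound $B(z)\le(1+\Delta)(1-q/2)^{\delta-2}=o(1)$ gives $F-z\ge1/2-o(1)$.

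\textbf{Seed exponents.} Your claim that the seed exponents are larger is exactly where the comparison-size hypotheses enter. The bound $I_A\gtrsim n^2K^2h^4/a$ dominates $I_V\asymp nK^2h^3$ only because $a\le Xa_0$ (ratio $\gtrsim q_0a_0$), or because $a\le2nb^2/K$ with $h\ge16b/K$ (ratio $\gtrsim1/b$). For $a$ of order $n$ the ratio near $h_*$ would be of order $h_*\to0$. So this must be verified, not asserted.
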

\begin{proof}
For sufficiently small fixed $\eta>0$, the derivative bounds of
Theorem~\ref{thm:D} give $F''\ge K/2$ on $[0,\eta/d]$. Thus
$F(z)-z\ge-bz+Kz^2/4\ge Kz^2/8$ once $z\ge8b_+/K$.
For B this holds from $u_L$; for X it holds from any
$Ru_0$ with fixed $R\ge32(L+1)$, and hence from $h_0$ eventually.
Since $h_{a/n}=(F-z)+(a/n)B\ge F-z$, $z\ge u/2$ and $1-z\ge1/2$
on the small interval, the queue margin is
\begin{equation}\label{eq:rangemargin}
D_a(u)\ge ndKu^2/64\quad(h_*\le u\le\eta/d).
\end{equation}
Beyond $\eta/d$ it is at least $knd$ through $U=\log2$.
To see this without an ODE theorem, at $z=\eta/d$ the lower bound
is a fixed $q>0$. Monotonicity gives $F-z\ge q/2$ until $z=q/2$;
on $[q/2,1/2]$ use
$B(z)\le(1+\Delta)(1-q/2)^{\delta-2}=o(1)$. The small comparison
seed densities are $o(1)$, so the reference in~\eqref{eq:Da} has the
claimed margin. Changing between $z$ and $u$ alters fixed constants only.

On a dyadic interval $[h,\min(2h,\eta/d)]$ take
$e_h=\rho ndKh^2$, where $\rho>0$ is a sufficiently small fixed
fraction of~\eqref{eq:rangemargin}. The vertex, sampled-seed and
free-end denominators in Lemma~\ref{lem:interval} and~\eqref{eq:Wexp}
are, respectively, bounded by constant multiples of
\[
nd^2h+nd^4h^2,\qquad ad^2,\qquad ndh+ndKh^2.
\]
In the first, $\Delta e_h=O(nd^4h^2)$ is included, using $K\asymp d^2$.
Dividing $e_h^2$ gives the three distinct lower exponents
\begin{equation}\label{eq:Bexponents}
I_V\ge k\frac{nK^2h^3}{1+d^2h},\qquad
I_A\ge k\frac{n^2K^2h^4}{a},\qquad
I_W\ge k\frac{ndK^2h^3}{1+Kh}.
\end{equation}
If $a=0$, the seed discrepancy is identically zero.
For X, $I_A/I_V\ge k(nh/a)(1+d^2h)\ge k_Xq_0a_0\to\infty$.
For B, $a\le2nb^2/K$ and $h\ge16b/K$ give
$I_A/I_V\ge k(Kh/b^2)(1+d^2h)\ge k/b\to\infty$.
Also $I_W/I_V\ge kd$, since $(1+d^2h)/(1+Kh)$ stays between
fixed positive constants. This verifies rather than discards the large-seed
composition error in B.

The function $h^3/(1+d^2h)$ increases. At $h_*$ its denominator
tends to one, while $nK^2h_*^3\ge\Psi$, and in B it is also at
least $16^3\Lambda$. Thus all exponents are at least $kH_*$.
The smallest tolerance is at least a constant times $Gq_0^2\to\infty$,
so the final reservation correction one is absorbed. The interpolation
grid of Lemma~\ref{lem:interval} has $O(1+1/(dh))$ points on an
interval of length $O(h)$. Its maximum is bounded by
$C(1+1/(du_0))=O(da_0)<Cn$ eventually. There are $O(\log n)$
intervals: $u_0\asymp n^{-1/3}d^{-4/3}\gg n^{-2/3}$ in this range.
Hence $Cn^2$ is a conservative common prefactor.
On a good interval the positive margin excludes an actual stop by
$Q(\tau)=-1$. On the final interval use tolerance $\rho nd$, vertex
variance $n\Delta^2$, $a\le n$ in sampled-seed Azuma, and free-end
bracket $O(nd)$. Each exponent is at least $kn$, with $O(d)$ grid
points. Only these small configuration bad events are divided by
$s\ge e^{-C_sd^2}$, proving~\eqref{eq:rangelate}: both
$\Psi/J\to\infty$ and $n/J\to\infty$.

At $U$ use any terminal tolerance $e_T=\varepsilon_T nd>0$.
Conditional vertex Hoeffding and sampled-seed Azuma each give
$2e^{-kn\varepsilon_T^2}$, uniformly for $a\le n$. Thus
$\Pp(H(U)>\bar H(U)+2e_T)\le4e^{-kn\varepsilon_T^2}$.
The exact mean is
$\bar H(U)\le nd(1+\Delta)2^{-\delta}$. On real survival,
every inactive vertex has at least $\delta-1$ unpaired ends; hence its
count, and the eventual inactive count, is at most $H(U)/(\delta-1)$.
For general $\varepsilon_T$, this gives inactive count at most
$Cn[d2^{-d+C\sqrt d}+\varepsilon_T]$, with conditioned error
$4e^{C_sd^2-kn\varepsilon_T^2}$. Taking $\varepsilon_T=\varepsilon$
proves~\eqref{eq:rangeterminal}, since $n\varepsilon^2=\sqrt{nJ}\gg J$. Latent independence is used
only before conditioning, not conditional on simplicity or on survival.

In the other direction a stop by $h_*$ without cap overflow has
$T_a\le a+m=o(n)$: every nonseed activation consumed at least one
partner reveal. Uniform matching completion joins inactive ends only
and cannot restart activation. The cap overflow probability under the
simple law is at most configuration overflow plus transcript TV,
$\mathcal O_*+\mathcal V_m=o(1)$, by~\eqref{eq:qEventTransfer} or
the direct capped argument following Lemma~\ref{lem:TV}.
This early error is not divided by simplicity. The two terminal
directions prove~\eqref{eq:rangeequiv}.
\end{proof}

\section{Proof of the X theorem}\label{sec:X}
Throughout this section assume $d^4/n\to0$, bounded $|b/b_0|$, and
first $a/a_0\to x>0$ and $b/b_0\to\ell$. Use the horizon $h_0$ constructed in
Section~\ref{sec:range}. The exact scale identities are
\begin{equation}\label{eq:scaleidentities}
b_0=a_0^{-1},\quad Ku_0=b_0,\quad nu_0=a_0^2,\quad
nKu_0^2=a_0,\quad a_0u_0=K^{-1},\quad nK^2u_0^3=1.
\end{equation}
They give $a_0\to\infty$, $b_0\to0$, $d^2u_0\asymp b_0\to0$,
$du_0\to0$, and $G\to\infty$. For $a=O(a_0)$,
\eqref{eq:seeddet} through $2h_0$ gives
\begin{equation}\label{eq:Xseed}
G^{-1}\sup_{u\le2h_0}|m_{A_0}(u)-\bar H(u)|
\le C[d^{-1/2}+h_0+d^2h_0^2]=o(1),
\end{equation}
uniformly over seed identities. We do not require $d^3u_0\to0$.

\subsection{A compact functional Brownian limit with explicit proof}
Fix a finite $R$. Conditional on any seed set of size $O(a_0)$, define
\[
L_n(s)=G^{-1}\sum_{i\notin A_0}d_i[B_i^*(u_0s)-p_i(u_0s)],
\qquad 0\le s\le R,
\]
where $p_i(u)=\pi(1-e^{-d_i u})$. We prove convergence to standard Brownian
motion as a process, with continuous limiting paths.

The indicators at distinct vertices are independent defective one-shot
events. Their covariance is
\[
\operatorname{Cov}(B_i^*(u_0s),B_i^*(u_0t))
=p_i(u_0\min(s,t))-p_i(u_0s)p_i(u_0t).
\]
Uniformly on fixed compact intervals,
$p_i(u_0s)=\pi d_i u_0s[1+O(du_0R)]$ and $p_i=O(u_0)$.
Hence the normalized covariance tends uniformly to $\min(s,t)$:
\[
G^{-2}\sum_{i\notin A_0}d_i^2p_i(u_0\min(s,t))
=\frac{\pi\sum_{i\notin A_0}d_i^3}{nd^2}\min(s,t)[1+o(1)],
\]
with the prefactor tending to one. The product term is $O(u_0)$ after
normalization. Deleting $a=o(n)$ vertices changes the prefactor by $O(a/n)$,
uniformly. For every linear combination of finitely many coordinates, the
largest centered normalized vertex contribution is $O(\Delta/G)=o(1)$.
The characteristic-function proof in Appendix~\ref{app:limits} therefore
gives all finite-dimensional centered Gaussian limits with Brownian covariance.

Here is tightness without importing a functional limit theorem. In the
filtration observing only these latent first-event indicators, and without
revealing thresholds initially, their hazard is
\[
h_i(u)=\frac{p_i'(u)}{1-p_i(u)}
=\frac{\pi d_i e^{-d_i u}}{1-\pi+\pi e^{-d_i u}}\le C.
\]
Thus $M_i(u)=B_i^*(u)-\int_0^u(1-B_i^*(t))h_i(t)\dd t$ are independent
martingales. For $Z_i=B_i^*-p_i$, subtract the deterministic equation
$p_i'=(1-p_i)h_i$ to obtain
$Z_i=M_i-\int h_iZ_i$. Since $\E|Z_i(t)|\le2p_i(t)\le Ct$,
\begin{equation}\label{eq:Xmartcorrection}
\E\sup_{s\le R}|L_n(s)-\mathcal M_n(s)|
\le\frac{Cndu_0^2R^2}{G}=\frac{CR^2}{K}\to0,
\quad \mathcal M_n(s)=G^{-1}\sum_{i\notin A_0}d_iM_i(u_0s).
\end{equation}
Jumps of $\mathcal M_n$ are at most $j_n=\Delta/G\to0$, and its bracket
has deterministic density in the $s$ coordinate at most
$G^{-2}u_0\sum_i d_i^2C\le C$. It also obeys the more precise relation
\begin{equation}\label{eq:Xbracket}
\langle\mathcal M_n\rangle_s
=G^{-2}\sum_{i\notin A_0}d_i^2\int_0^{u_0s}(1-B_i^*(t))h_i(t)\dd t
=s+o_P(1),
\end{equation}
uniformly on $[0,R]$: $h_i(t)=\pi d_i[1+O(du_0R)]$, and the nonnegative
subtracted indicator term has normalized mean $O(u_0R^2)$.

For tightness use the exponential martingale inequality on each deterministic
mesh cell of length $\delta$. The bracket increase in a cell is at most
$C\delta$, so, conditional on its starting information,
\[
\Pp\!\left(\sup_{s\text{ in cell}}
|\mathcal M_n(s)-\mathcal M_n(\text{left endpoint})|>\varepsilon\right)
\le2\exp[-\varepsilon^2/(2(C\delta+j_n\varepsilon/3))].
\]
There are at most $R/\delta+1$ cells. If all such cell deviations are below
$\varepsilon$, the oscillation at distance at most $\delta$ is at most
$3\varepsilon$. Consequently its probability of a larger oscillation has
limiting upper bound at most
$2(R/\delta+1)e^{-\varepsilon^2/(2C\delta)}$, tending to zero as
$\delta\downarrow0$. The correction in~\eqref{eq:Xmartcorrection} is uniformly
negligible. The compactness and finite-dimensional argument proved in
Appendix~\ref{app:limits} now shows
\begin{equation}\label{eq:LFCLT}
L_n\Rightarrow\mathcal B\quad\hbox{on every fixed }[0,R],
\end{equation}
uniformly in the conditional seed set. This proof gives a functional limit
and its continuous-path property; bracket convergence alone was not used
as a substitute for tightness or identification.

The centered threshold-two part of~\eqref{eq:loss}, divided by $G$, has
maximal mean square at most
\[
C\frac{n(du_0+d^4u_0^2)}{d^2a_0^2}
=O(d^{-1}+d^2u_0)=o(1).
\]
The seed error is removed by~\eqref{eq:Xseed}, and the stopped free-end error
satisfies, by~\eqref{eq:Wbounds},
\begin{equation}\label{eq:XW}
G^{-1}\sup_{u\le Ru_0\wedge\tau}|W(u)-Me^{-2u}|
\xrightarrow{P}0.
\end{equation}
All centered label noise is already in the rare-event distribution $p_i$.
For orientation, conditioning on labels would produce an additional conditional
mean variance $O(nd^3u_0^2)$, which divided by $G^2$ is $O(du_0)=o(1)$;
it is not added a second time.

The exact response expansion is
$F(z)=(1-b)z+Kz^2/2+O(d^3z^3)$. In~\eqref{eq:Da} put $u=u_0s$ and
$z=u+O(u^2)$. Using~\eqref{eq:scaleidentities}, uniformly on $[0,R]$,
\begin{equation}\label{eq:Xdrift}
D_a(u_0s)/G=a/a_0-(b/b_0)s+s^2/2+o(1).
\end{equation}
The cubic error is $O(du_0R^3)$; the coordinate, multiplicative $y$, and
seed-depletion errors are $O(u_0)$ for fixed $R$. These all vanish. Thus the
fully latent process
\begin{equation}\label{eq:YFCLT}
Y_n(s)=[Me^{-2u_0s}-H(u_0s)]/G
\Rightarrow x+\mathcal B(s)-\ell s+s^2/2=:Z_{x,\ell}(s)
\end{equation}
on each compact interval under the unconditioned clock law. No unrestricted
latent process is transferred to the simple law later.

\subsection{Continuity of the actual stopping event}
Let $E_R=\{\tau>Ru_0\}$. Equation~\eqref{eq:XW}, the terminal value $-1$,
and $G\to\infty$ give a random $\varepsilon_n\to0$ in probability such that
\begin{equation}\label{eq:Xsandwich}
\{\min_{s\le R}Y_n(s)>\varepsilon_n\}\subset E_R
\subset\{\min_{s\le R}Y_n(s)\ge-\varepsilon_n\}.
\end{equation}
For the left implication, a hypothetical stop contradicts its positive
latent value at $\tau$. For the right one, real survival makes the queue
nonnegative throughout the interval. Brownian motion with the deterministic
drift here is absolutely continuous with respect to Brownian motion on
$[0,R]$; the deterministic-drift argument and the reflection principle are
proved in Appendix~\ref{app:Brownian}. For $x>0$ they imply that
$\min_{s\le R}Z_{x,\ell}(s)$ has no atom at zero. The minimum is a
continuous functional under uniform convergence to continuous paths.
Therefore~\eqref{eq:YFCLT}--\eqref{eq:Xsandwich} give
\begin{equation}\label{eq:Xcompactsurvival}
\Pp_\Conf(E_R)\longrightarrow
p_R(x,\ell):=\Pp(\min_{s\le R}Z_{x,\ell}(s)>0).
\end{equation}

Use $m=\lceil4Mh_0\rceil$. By~\eqref{eq:rangetuning},
\begin{equation}\label{eq:Xcap}
m=o(n/d),\qquad md/n=O(\theta+d/n)=o(1),\qquad m\to\infty.
\end{equation}
Poisson overflow is $e^{-km}=o(1)$. Lemma~\ref{lem:TV}, with the next
waiting innovation included, transfers the actual events $E_R$,
$E_*:=\{\tau>h_0\}$ and $E_R\setminus E_*$ at cost $o(1)$.
The simple overflow bound is configuration overflow plus TV; no simplicity
division of this early error is required. Thus
\eqref{eq:Xcompactsurvival} holds under the simple law as well.

\subsection{Bridge from a fixed compact interval to the growing horizon}
Suppose $|b/b_0|\le L$ and fix $R\ge32(L+1)$. The deterministic
margin calculation in Lemma~\ref{lem:rangefinal}, using $z\ge u/2$
and $z\ge8Lb_0/K$, gives for either sign of $b$
\begin{equation}\label{eq:Xmargin}
D_a(u)\ge c_0ndKu^2\quad(Ru_0\le u\le\eta/d).
\end{equation}
Apply~\eqref{eq:commonDyadic} to $[Ru_0,h_0]$. The deterministic
seed discrepancy~\eqref{eq:Xseed} is $o(G)$ through $2h_0$, so it is
absorbed by every margin, whose first size is $GR^2$. On a dyadic
interval starting at $h=su_0$ the remaining failure bound is
\begin{align}
\Pp(\hbox{loss or free-end deviation exceeds its budget})
&\le C\left[\frac1{nK^2h^3}+\frac1{ndK^2h^3}
+\frac{d^2}{nK^2h^2}\right]+\frac{C}{(ndKh^2)^2}\notag\\
&\le C[s^{-3}+d^{-1}s^{-3}+b_0s^{-2}]+o(1)s^{-4},
\quad s=h/u_0.\label{eq:Xdyadic}
\end{align}
Here $d^2u_0\asymp b_0$ and $ndKh^2=Gs^2$, with $G\to\infty$.
The geometric sum therefore gives
\begin{equation}\label{eq:Xbridge}
\Pp_\Conf(E_R\setminus E_*)\le CR^{-3}+Cb_0R^{-2}+o(1).
\end{equation}
The constant is independent of the number of intervals and of admissible
$R$. Transfer only this real event by~\eqref{eq:Xcap}, obtaining
$\limsup_n\Pp_\Simple(E_R\setminus E_*)\le CR^{-3}$; the latent
losses that bounded it need not be measurable in the capped transcript.

\subsection{Late passage and the final macroscopic event}
Lemma~\ref{lem:rangefinal} applies with $h_*=h_0$: its initial
vertex exponent is $k\Psi$, its terminal tolerance is
$\varepsilon nd$, and both late errors remain negligible after
simplicity division. A stop by $h_0$ without early cap overflow has
$T_a\le a+m=o(n)$; survival to $U$ gives $n-T_a=o(n)$.
Consequently its two-direction equivalence is
\begin{equation}\label{eq:Xfinalequiv}
\Pp_{\Simple}(T_a\ge n/2)-\Pp_{\Simple}(E_*)\longrightarrow0.
\end{equation}
The proof of that lemma uses the exact stopped queue and sampled-seed
concentration and is independent of the compact functional limit.

\subsection{Infinite horizon and the extreme scaled seed sizes}
For fixed $R\ge32(L+1)$, compact survival, the bridge, and terminal
equivalence give
\[
p_R(x,\ell)-CR^{-3}\le\liminf_n\Pp(T\ge n/2)
\le\limsup_n\Pp(T\ge n/2)\le p_R(x,\ell).
\]
Appendix~\ref{app:Brownian} proves $\mathcal B(s)=o(s^2)$ almost surely,
so $Z_{x,\ell}(s)\to\infty$ and its infimum is attained at finite time.
The compact no-atom result gives
$p_R\downarrow\Pp(\inf_{s\ge0}Z_{x,\ell}(s)>0)=p(x,\ell)$.
Letting $R\to\infty$ proves~\eqref{eq:Xlimit}, with the order of limits
kept explicit.

The function is increasing in $x$ and decreasing in $\lambda$.
It is less than one because at any fixed positive time the process has
positive probability of being negative. For positivity choose
$R>\lambda+1$. A finite Brownian path tube around the function $s\mapsto
\lambda s$, thin enough to preserve positivity, has positive probability;
the Brownian small-ball and deterministic-shift proofs are in the appendix.
Its endpoint is at least a positive height $h$. After $R$, future drift is
at least $(R-\lambda)t$, and independent future Brownian increments have
probability at least $1-e^{-2(R-\lambda)h}>0$ of never crossing $-h$, by
the exponential-martingale bound proved there. Thus $p>0$.

As $x\downarrow0$, finite-interval absolute continuity and the reflection
principle show that Brownian motion with the bounded smooth drift started
at zero is negative somewhere immediately almost surely, so $p(x,\lambda)
\to0$. As $x\to\infty$, the finite infimum of
$\mathcal B(s)-\lambda s+s^2/2$ gives $p(x,\lambda)\to1$.

For $a=o(a_0)$ compare by nested uniform seed sets with
$\lceil\varepsilon a_0\rceil$. Every subsequence has a further subsequence
with $b/b_0\to\lambda\in[-L,L]$, and its limiting upper probability is
at most $p(\varepsilon,-L)$. Let $\varepsilon\downarrow0$ to get zero.
For $a/a_0\to\infty$, compare from below with $\lfloor H a_0\rfloor$ for
fixed $H$, giving subsequential lower limit at least $p(H,L)$; let
$H\to\infty$ to get one. Only bounded comparison sizes enter the proof,
so no assumption $a=O(a_0)$ is silently imposed on the corollaries.
Exactly zero seeds remain absorbed. This completes Theorem~\ref{thm:X}.

\subsection{Nonvacuity and a limit of the Brownian function}
If $C^2>c+1$, Proposition~\ref{prop:realize} makes both stochastic regimes
nonvacuous. S uses target $\beta/d^2$. For X target
$t=\lambda(d^2/n)^{1/3}$ with integer $d\asymp\log^2n$ and even $n$.
The variance shift $d^3t/(d-c)=O(d^{8/3}n^{-1/3})=o(1)$ stays in the
construction's interval; its deficit error $O(1/(nd^2))$ is $o(b_0)$,
and actual $K/d^2\to1$ gives $b/b_0\to\lambda$. This works for $\lambda=0$
without exact arithmetic zero. If $C^2\le c+1$, the capacity bounds imply
$b/b_0\to+\infty$ in this logarithmic-degree range; finite $\lambda$ is
then impossible.

There is also the separate consistency identity
\begin{equation}\label{eq:Gaussianconsistency}
p(\lambda^2/2+t\sqrt\lambda,\lambda)\longrightarrow\Phi(t)
\quad(\lambda\to+\infty)
\end{equation}
for every fixed real $t$, by the quantitative bound~\eqref{eq:qConsistency}.
The underlying pathwise assertion is also retained: after $s=\lambda r$
and Brownian scaling, the penalized infimum is
$\inf_{r\ge0}[\mathcal B(r)+\lambda^{3/2}(r-1)^2/2]$.
It tends almost surely to $\mathcal B(1)$: continuity controls a shrinking
neighborhood of one, the penalty diverges on its compact complement,
and subquadratic Brownian growth makes the infimum over $r\ge2$ diverge.
Its normal limit has no atom at $-t$. This concerns the Brownian function
and interchanges no graph limits.
Section~\ref{sec:bridge} proves the graph Gaussian layer separately.

\section{Quantitative comparison tools and finite error budgets}\label{sec:quant}
\begin{lemma}[Gaussian bottleneck localization]\label{lem:Gloc}
Assume~\eqref{eq:envelope}, $b>0$, and $\Lambda=nb^3/K\to\infty$.
Put $w=\sqrt{nb/K}$, $x_a=(a-n\adet)/w$, and
$u_L=-\log(1-16b/K)$. For every fixed $X>0$,
\[
\sup_{\substack{a\in\{0,\ldots,n\}\\|x_a|\le X}}
\left|\Pp_{\Conf}(\tau>u_L)-\Phi(x_a)\right|\longrightarrow0.
\]
\end{lemma}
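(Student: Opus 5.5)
Under the configuration law, the plan is to show that survival to $u_L$ is, up to $o(1)$, the event that the queue is positive at the single deterministic bottleneck time $\hat u=-\log(1-\widehat z)$, and that this one-point height is asymptotically Gaussian with the right centre and scale.

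\textbf{Deterministic profile.} By~\eqref{eq:Da}, $D_a(u)=ndy[\alpha+(1-\alpha)F(z)-z]$. Write $\alpha+(1-\alpha)F-z=(1-F)(\alpha-\mathcal R(z))$. This is minimized in $z$ exactly at the maximizer $\widehat z$ of $\mathcal R$. Its minimal value is $(1-F(\widehat z))(\alpha-\adet)$, and $1-F(\widehat z)=1+O(b^2/K)$. Hence
\[
D_a(\hat u)=nd(\alpha-\adet)(1+o(1)),
\qquad
D_a(\hat u)/\sigma_b=x_a+o(1),
\]
with $\sigma_b=dw$, uniformly for $|x_a|\le X$. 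Here $a=n\adet+O(w)$, so $a=O(nb^2/K)$ and the hypotheses of \S\ref{sec:explore} hold. By Theorem~\ref{thm:D}, the curvature near $\widehat z$ is $\asymp ndK$. Therefore, for $|u-\hat u|=\gamma\hat u$,
\[
D_a(u)-D_a(\hat u)\asymp ndK\gamma^2\hat u^2=\gamma^2\,nb^2/K\cdot d .
\]
Measured in units of $\sigma_b$, this is $\gamma^2\sqrt\Lambda\to\infty$ for fixed $\gamma$. The drift is therefore flat only in a window of relative width $\Lambda^{-1/4}$ around $\hat u$.

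\textbf{Noise over time.} Decompose $Q-D_a$ into four pieces:
\begin{itemize}
\item the leading threshold-one loss $\sum d_i(B_i^*-p_i)$;
\item the threshold-two remainder;
\item the seed-mean error $m_{A_0}-\bar H$;
\item the free-end error $W-Me^{-2u}$.
\end{itemize}
Up to $u_L\asymp 16b/K$, the last three have maximal second moments $o(\sigma_b^2)$. This follows from \eqref{eq:lossmax} (threshold-two part $n(dr+d^4r^2)=o(nb)$), \eqref{eq:seeduniform} ($\Delta^2qa=O(d^{5/2}nb^2/K)=o(nd^2\,b/K)$ since $b\sqrt d\to0$), and \eqref{eq:Wbounds}. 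By Lemma~\ref{lem:nested}, the leading term has increment over $[\hat u(1-\gamma),\hat u(1+\gamma)]$ with maximal mean square $O(\gamma\,nd^2\hat u)=O(\gamma\sigma_b^2)$. Its value at $\hat u$ is asymptotically $N(0,\sigma_b^2)$ by \eqref{eq:leadvar} and the Lindeberg/characteristic-function argument of Appendix~\ref{app:limits}, since individual jumps are $\Delta\ll\sigma_b$.

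\textbf{Sandwich.} The upper bound comes from $\{\tau>u_L\}\subset\{\tau>\hat u\}\subset\{Q(\hat u)\ge0\}$, because $\hat u<u_L$ by Theorem~\ref{thm:D}. This gives $\limsup\Pp\le\Phi(x_a)$ via the Gaussian limit of $Q(\hat u)/\sigma_b$ and continuity of $\Phi$. For the lower bound I split $[0,u_L]$ into three ranges:
\begin{itemize}
\item On $[0,\hat u/2]$ and on $[2\hat u,u_L]$, i.e.\ away from the bottleneck, the margin exceeds $\sigma_b$ by a diverging factor. Dyadic bounds in the style of \eqref{eq:commonDyadic} give small failure probability there. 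Near $u=0$ the margin is $D_a\ge a d+(b-\ldots)$ plus growth, and this needs care; the dyadic sum starting from $u\asymp a/(nb)\cdot$ scale uses $\mathcal D(v)$ terms like $1/(nK^2v^3)$, which are small once $v\gg u_0$. The key observation is $\hat u/u_0=\lambda\to\infty$.
\item On $[\hat u/2,2\hat u]$, take $\gamma$ small and fixed. In the outer part, where $|u-\hat u|\ge\gamma\hat u$, the drift excess $\gamma^2\sqrt\Lambda\,\sigma_b$ dominates the noise oscillation $O(\sigma_b)$.
\item In the inner part, the oscillation is $O(\sqrt\gamma\,\sigma_b)$ in $L^2$.
\end{itemize}
Hence
\[
\Pp(\tau>u_L)\ge\Pp\bigl(Q(\hat u)>\epsilon\sigma_b\bigr)-C\gamma/\epsilon^2-o(1)\to\Phi(x_a-\epsilon)-C\gamma/\epsilon^2 .
\]
Letting $\gamma\to0$ and then $\epsilon\to0$ closes the sandwich. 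Uniformity over $|x_a|\le X$ holds because every bound depends on $a$ only through $a=O(nb^2/K)$.

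\textbf{Main obstacle.} The delicate step is the early range $[0,\hat u/2]$ when $a$ is small relative to the noise there. Early on, $D_a\approx ad+ndbu-\ldots$ while the noise is of order $\sqrt{nd^2u}$. This is fine provided $ad\gg d\sqrt{nu}$ throughout, i.e.\ $a^2\gg nu$ at $u\sim\hat u$, which is exactly the statement $\Lambda\to\infty$. I would handle it with dyadic intervals in $u$ using the margin $D_a\ge ad/2$ on $[0,\hat u/2]$. The failure sum is then $\sum nd^2v/(ad)^2\lesssim n\hat u/a^2\asymp\Lambda^{-1}\to0$.
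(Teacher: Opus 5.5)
Your proposal is correct and follows essentially the paper's route. The exact double zero at $\widehat z$ gives a quadratic margin of order $\sqrt\Lambda(s-\widehat s)^2$ in $\sigma_b$ units; a global maximal bound plus a local modulus for the leading threshold-one field then pins the latent minimum to the single Gaussian height at the bottleneck, and the stopped sandwich through $Q(\tau)=-1$ converts this into the event $\tau>u_L$. The paper only makes this quantitative, with shrinking $\delta=\nu^3$ and scalar normal replacement, and it covers $[0,\widehat u/2]$ with the same global bound instead of your separate dyadic argument. Two slips are harmless, each costing only $O(Xb/K)\sigma_b=o(\sigma_b)$: the profile is minimized exactly at $\widehat z$ only when $\alpha=\adet$, and $1-F(\widehat z)=1-O(b/K)$ rather than $1+O(b^2/K)$.
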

The deterministic queue profile has a quadratic minimum. Outside a
shrinking neighborhood its rise dominates the noise; inside, the leading
noise varies negligibly, leaving a single asymptotically normal height.
Section~\ref{subsec:Gloc} proves the lemma through a stopped-event
comparison and gives its finite error budget~\eqref{eq:qGearly}.
The capped transfer and late continuation in Section~\ref{sec:bridge}
then turn early survival into the terminal simple-graph law B.

This section derives its probability rates directly. The finite comparison
inequalities retain their displayed domain conditions. The eventual
$O(1/\log d)$ graph rate below is restricted to $d\asymp\log^2 n$
and bounded deficit; the tuned Gaussian terminal rate in
Section~\ref{sec:S} retains $b\sim\beta d^{-2}$ and $n/d^{10}\to\infty$.
Constants denoted $C$ may
depend on fixed $c,C$ from the degree envelope and on a stated compact
parameter set; they never depend on a positive lower bound for $b$.
The number of degree classes is denoted $r_D\le2C\sqrt d+2$ here, so that
the letter $q$ can be used for a time-mesh size.
The finite derivative bounds used below are~\eqref{eq:curvfinite},
with $D_*=\Delta$: $K/2\le F''\le K$ and
$|F'''|\le3\Delta^3$ on $[0,1/(8\Delta)]$. Their finite
hypotheses are stated in the proof of Theorem~\ref{thm:D} and hold
eventually under~\eqref{eq:envelope}.

\subsection{Deterministic error in probability coordinates}
For $b>0$, put $t_b=b/K$, $A=b^2/(2K)$,
$\eta_D=(1+3\Delta^3/K)t_b$. The finite barrier proof gives
\[
0\le\widehat z/t_b-1\le(9/2)\eta_D,\qquad
1\le\adet/A\le(1+9\eta_D)/(1-3t_b).
\]
If $\eta_D\le1/12$, $0\le\adet/A-1\le16\eta_D$.
Since $nA/w=\sqrt\Lambda/2$, substituting the quadratic center incurs
the Gaussian-coordinate error bounded by
\begin{equation}\label{eq:qCenterError}
16nA\eta_D/w=8\sqrt\Lambda\,\eta_D.
\end{equation}
This need not be small even though $\eta_D=O(b/d)$ tends to zero.

In crossover coordinates let $G=da_0$, $x=a/a_0$, $\lambda=b/b_0$.
For $u=u_0s$, $0\le s\le R$, under the finite derivative bounds~\eqref{eq:curvfinite} and
$u_0R\le1/(8\Delta)$, $|b|\le1$,
\begin{equation}\label{eq:qDriftR}
\sup_{s\le R}\left|D_a(u_0s)/G-[x-\lambda s+s^2/2]\right|
\le\mathcal D_R,
\end{equation}
where
\[
\mathcal D_R=u_0[4xR+2|\lambda|R^2+2R^3+xb_0R^2
 +(\Delta^3/K)R^3].
\]
Here $x\ge0$. To prove this, write $z=1-e^{-u_0s}$, $v=z/u_0$ and
$y=e^{-u_0s}$. The exact scaled reference equals
$y[x-\lambda v+v^2/2-xF(z)+\mathcal R_3]$.
The bounds $|v-s|\le u_0s^2/2$, $1-y\le u_0s$,
$F(z)\le2z+Kz^2/2$, and $|F'''|\le3\Delta^3$ give
$|\mathcal R_3|\le(\Delta^3/(2K))u_0R^3$.
The first three terms cost at most
$u_0[xR+(3/2)|\lambda|R^2+R^3]$, and seed depletion costs
$u_0[2xR+(xb_0/2)R^2]$. They fit in the displayed larger coefficients.

\subsection{Smooth replacement and a fourth-moment maximal lemma}
Define $\varphi(t)=10t^3-15t^4+6t^5$ on $[0,1]$, zero below and one above.
Its first and second derivatives have bounds 2 and 6, and its second
derivative is globally Lipschitz with constant 60. Second-order Taylor
remainder at smoothing width $\eta$ is at most $10\eta^{-3}|h|^3$.

\begin{lemma}[A scalar quantitative normal comparison]\label{lem:qScalar}
For independent centered $X_i$, $|X_i|\le j$, and total variance
$\theta\in[1/2,2]$, for $0<\eta\le1/4$,
\[
\sup_t|\Pp(\sum_iX_i\le t)-\Phi(t)|
\le C[j/\eta^3+\eta+|\theta-1|].
\]
For sufficiently small $j$, taking $\eta=j^{1/4}$ gives the bound
$C[j^{1/4}+|\theta-1|]$.
\end{lemma}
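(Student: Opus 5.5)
We would prove Lemma~\ref{lem:qScalar} by a Lindeberg replacement argument with the smooth step $\varphi$ just defined, followed by a smoothing-to-indicator comparison. Write $S=\sum_iX_i$, $\sigma_i^2=\E X_i^2$, and let $N_i$ be independent centered normals with variances $\sigma_i^2$, so that $\sum_iN_i\sim N(0,\theta)$. For fixed $t$ and width $\eta$ put $f_t(s)=\varphi((t-s)/\eta)$. It equals one for $s\le t$ and zero for $s\ge t+\eta$, so
\[
\Pp(S\le t)\le\E f_t(S)\qquad\text{and}\qquad \E f_{t-\eta}(S)\le\Pp(S\le t).
\]
Hence it suffices to bound $|\E f_t(S)-\E f_t(\sum_iN_i)|$ uniformly in $t$, and then to compare $\sum_iN_i$ with a standard normal at cost $\eta$ plus the variance mismatch.

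For the replacement step, swap the $X_i$ one at a time for the $N_i$. Let $V_i$ be the sum of the already-swapped $N_k$ and the not-yet-swapped $X_k$, excluding index $i$. Expand $f_t(V_i+X_i)$ and $f_t(V_i+N_i)$ to second order around $V_i$. The zeroth-, first- and second-order terms agree in expectation, because $X_i$ and $N_i$ are independent of $V_i$ and share their mean and variance. By the stated Taylor bound, the remainders are at most $10\eta^{-3}(\E|X_i|^3+\E|N_i|^3)$. Since $|X_i|\le j$, we have $\E|X_i|^3\le j\sigma_i^2$, and $\E|N_i|^3=\sqrt{8/\pi}\,\sigma_i^3\le C j\sigma_i^2$ because $\sigma_i\le j$. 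Summing over $i$ gives a total error of at most $C\eta^{-3}j\theta\le Cj/\eta^3$.

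Next compare the Gaussian $N(0,\theta)$ with $\Phi$ at the points $t$ and $t\pm\eta$. Two elementary facts are needed. First, $\Phi$ is $(2\pi)^{-1/2}$-Lipschitz, so moving $t$ by $\eta$ costs at most $C\eta$. Second, for $\theta\in[1/2,2]$,
\[
\sup_t|\Phi(t/\sqrt\theta)-\Phi(t)|\le C|\theta-1|.
\]
This follows from the mean value theorem in the scale parameter: $\sup_t|t\,\phi(t/s)|/s^2$ is bounded uniformly for $s\in[1/\sqrt2,\sqrt2]$, and $|\sqrt\theta-1|\le|\theta-1|$. Combining the two smoothing sandwiches with these bounds gives both the upper and the lower deviation, which yields the first bound $C[j/\eta^3+\eta+|\theta-1|]$.

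For the specialization, take $\eta=j^{1/4}$. This satisfies $\eta\le1/4$ once $j\le4^{-4}$, and it gives $j/\eta^3=j^{1/4}$, hence the bound $C[j^{1/4}+|\theta-1|]$. The only step needing care is the third-moment bookkeeping in the replacement: the Gaussian third moments must be controlled by $j\sigma_i^2$ through $\sigma_i\le j$, and the expansion must use independence of $V_i$ from the swapped coordinate. Everything else is routine.
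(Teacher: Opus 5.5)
Your proposal is correct and follows the paper's proof essentially step for step. Both use Lindeberg replacement with a $\varphi$-smoothed indicator, the third-moment bounds $\E|X_i|^3\le j\sigma_i^2$ and $\E|N_i|^3\le Cj\sigma_i^2$ via $\sigma_i\le j$, a smoothing cost $C\eta$ that uses $\theta\ge1/2$, and a variance correction obtained by differentiating $\Phi(t/\sqrt\theta)$. One harmless slip: $\varphi((t-s)/\eta)$ is one for $s\le t-\eta$ and zero for $s\ge t$, so the step on $[t,t+\eta]$ that your sandwich needs is $f_t(s)=\varphi((t+\eta-s)/\eta)$.
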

\begin{proof}
Replace summands one at a time by independent centered normals of matching
variance, using smoothed indicators built from $\varphi$.
The third absolute moments of the original summands sum to at most
$j\theta$. Each matching normal has standard deviation at most $j$,
so their third absolute moments sum to at most $2j\theta$.
First and second Taylor terms cancel in each replacement, leaving at
most $30j\theta/\eta^3$. A normal interval of width $\eta$ costs
$C\eta$ for $\theta\ge1/2$. Differentiating
$\Phi(t/\sqrt\theta)$ gives a derivative uniformly bounded for
$\theta\in[1/2,2]$, and hence the variance correction $C|\theta-1|$.
This proves the assertion, including uniformity in $t$.
\end{proof}

\begin{lemma}[Weighted fourth-moment maximum]\label{lem:qFourth}
For independent crossing times $V_i$, real deterministic weights $w_i$,
$p_i=\Pp(l<V_i\le r)$, and $V=\sum_iw_i^2p_i$,
\[
\E\sup_{l\le t\le r}
\left|\sum_iw_i\{\ind_{\{l<V_i\le t\}}-\Pp(l<V_i\le t)\}\right|^4
\le160[V^2+\sum_iw_i^4p_i].
\]
\end{lemma}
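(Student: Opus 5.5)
The plan is to repeat the symmetrization argument of Lemma~\ref{lem:nested}, replacing Doob's $L^2$ inequality by Doob's $L^4$ inequality and computing a Rademacher fourth moment exactly. First I reduce to increments. Replace every $V_i$ outside $(l,r]$ by $+\infty$. This does not change $\ind_{\{l<V_i\le t\}}$ for $l\le t\le r$. The process becomes $Z(t)=\sum_i w_i(N_i(t)-\Pp(N_i(t)=1))$ with $N_i(t)=\ind_{\{V_i\le t\}}$ nondecreasing, and $N_i(r)$ has mean $p_i$. All sums below are over the modified crossing times.

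Next I symmetrize. Introduce independent copies $V_i'$ with indicators $N_i'$. Since $\E[N_i'(t)]=\Pp(l<V_i\le t)$, conditional Jensen applied to the convex functional $\sup_t|\cdot|^4$ gives
\[
\E\sup_t|Z(t)|^4\le\E\sup_t\Bigl|\sum_iw_i(N_i(t)-N_i'(t))\Bigr|^4 .
\]
Swapping each original/ghost pair preserves the joint law, so independent Rademacher signs $\varepsilon_i$ may be inserted. The elementary inequality $|x-y|^4\le8(x^4+y^4)$ then bounds the right side by
\[
16\,\E\sup_t\Bigl|\sum_i\varepsilon_iw_iN_i(t)\Bigr|^4 .
\]

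Now condition on the crossing times. Sorting the finite $V_i\le r$, the process $t\mapsto\sum_i\varepsilon_iw_iN_i(t)$ runs through (a subset of, because of ties) the partial sums of independent symmetric variables $\varepsilon_iw_i$. Doob's $L^4$ inequality therefore bounds the conditional expected supremum by $(4/3)^4\E[S^4\mid V]$, where $S=\sum_i\varepsilon_iw_iN_i(r)$. Expanding with Rademacher moments gives exactly
\[
\E[S^4\mid V]=\sum_iw_i^4N_i+3\sum_{i\ne j}w_i^2w_j^2N_iN_j\le3\Bigl(\sum_iw_i^2N_i\Bigr)^2,
\]
using $N_i^2=N_i$. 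Averaging over the independent $N_i(r)$ gives
\[
\E\Bigl(\sum_iw_i^2N_i\Bigr)^2=\sum_iw_i^4p_i+\sum_{i\ne j}w_i^2w_j^2p_ip_j\le V^2+\sum_iw_i^4p_i .
\]

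Collecting constants gives $16\cdot(256/81)\cdot3=12288/81<152\le160$, which proves the claim. The only step needing care is the Jensen/symmetrization step for a supremum over an uncountable index set. I handle it as in Lemma~\ref{lem:nested}: paths are right-continuous step functions with finitely many jumps, so the supremum is a maximum over the finitely many jump times together with $l$, and the conditional Jensen inequality applies pathwise.
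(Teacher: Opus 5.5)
Your proposal is correct and follows the paper's proof essentially step for step. Both arguments use ghost-sample Jensen and Rademacher symmetrization, the fourth-power triangle inequality costing $16$, and the conditional Doob $L^4$ factor $(4/3)^4$ on the ordered partial sums; they then apply the Rademacher bound $\E S^4\le3(\sum_iw_i^2N_i)^2$ and independence, reaching the same constant $4096/27<152$. The only difference is that the paper derives the $L^4$ Doob factor from the stopping argument of Lemma~\ref{lem:Doob}, whereas you cite it as a standard inequality.
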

\begin{proof}
The $L^4$ Doob factor used here is proved as follows. On a finite grid,
the stopping argument of Lemma~\ref{lem:Doob} gives
$a\Pp(N^*\ge a)\le\E[|N_T|\ind_{\{N^*\ge a\}}]$.
Multiply by $4a^2$ and integrate to get
$\E(N^*)^4\le(4/3)\E[|N_T|(N^*)^3]$.
H\"older gives $\E(N^*)^4\le(4/3)^4\E|N_T|^4$;
dense grids and monotone convergence give the continuous-time version.

Ghost-sample Jensen and symmetrization by swapping each original/ghost
pair insert independent Rademacher signs. The fourth-power triangle
inequality costs at most 16. Conditional on the crossing times, order
the indices with $l<V_i\le r$. Their signed partial sums form a martingale;
the final Rademacher fourth moment is at most three times the squared sum
of squared weights. Doob contributes $(4/3)^4$. Finally independence gives
\[
\E(\sum_iw_i^2\ind_{\{l<V_i\le r\}})^2
\le V^2+\sum_iw_i^4p_i.
\]
The total coefficient $16(4/3)^4\cdot3=4096/27<152<160$ proves the lemma.
It is applied to each within-vertex dependent indicator family separately.
\end{proof}

For a $q$-coordinate positivity event, use a product of shifted copies of
$\varphi$ with width $\eta$. Every second mixed partial is bounded by
$6\eta^{-2}$. Every third mixed partial is bounded by $60\eta^{-3}$:
derivative patterns $3$, $2+1$, and $1+1+1$ cost at most 60, 12, and 8.
Thus replacement of independent centered vectors $X_i$ by matching Gaussian
vectors $N_i$ costs at most
\begin{equation}\label{eq:qVectorReplacement}
10\eta^{-3}\sum_i[\E\|X_i\|_1^3+\E\|N_i\|_1^3].
\end{equation}
Matching its total Gaussian covariance $\Sigma$ to another $\Sigma_0$
adds at most
\begin{equation}\label{eq:qGaussianInterpolation}
3q^2\|\Sigma-\Sigma_0\|_{\max}/\eta^2.
\end{equation}
For a proof interpolate the covariances linearly. Differentiating a
nonsingular Gaussian density and integrating twice by parts gives derivative
$\tfrac12\sum_{jk}(\Sigma-\Sigma_0)_{jk}\E\partial_{jk}f$.
The second-derivative bound proves~\eqref{eq:qGaussianInterpolation}.
Adding an independent Gaussian covariance $\epsilon I$ and then letting
$\epsilon\downarrow0$ removes singularity by bounded convergence.

\subsection{A tunable compact Brownian comparison}
Conditional on any seed set, the leading field is
\[
L_{A_0}(s)=G^{-1}\sum_{i\notin A_0}d_i[B_i^*(u_0s)-p_i(u_0s)],
\quad p_i(u)=\pi(1-e^{-d_i u}).
\]
Define
\begin{align*}
\zeta_R&=\left|\frac{\pi\sum_i d_i^3}{nd^2}-1\right|
       +\frac{\pi a\Delta^3}{nd^2},\qquad
\mathcal C_R=R\zeta_R+C\Delta u_0R^2,\\
\mathcal E_R&=C_0[R/d+d^2u_0R^2+r_Da/a_0^2+G^{-2}].
\end{align*}
The exact covariance is
\[
G^{-2}\sum_{i\notin A_0}d_i^2
[p_i(u_0\min(s,t))-p_i(u_0s)p_i(u_0t)].
\]
Its first term is $\rho_{A_0}\min(s,t)$ up to
$C\Delta u_0R^2$, where $\rho_{A_0}=\pi\sum_{i\notin A_0}d_i^3/(nd^2)$;
the product term is at most $Cu_0R^2$. Consequently the maximum covariance
error from Brownian motion is at most $\mathcal C_R$.
Exact mean cancellation gives
\[
\pi\E D^3/d^2=(1-c/d)[1+3\Var(D)/d^2+\mu_3/d^3],
\quad\zeta_R\le C_{c,C}(d^{-1}+a/n).
\]
The centered threshold-two remainder has normalized maximal squared size
$C(R/d+d^2u_0R^2)$; stopped free ends cost $C(R/d+G^{-2})$;
the exact seed-mean bound~\eqref{eq:seeduniform} costs $Cr_Da/a_0^2$.
Their combined supremum exceeds $\varepsilon$ with probability at most
$\mathcal E_R/\varepsilon^2$.

Take $q\ge1$, $h=R/q$, positive $\eta,v,\varepsilon$, and mesh times $jh$.
Assume the finite derivative bounds~\eqref{eq:curvfinite},
$u_0R\le1/(8\Delta)$, $|b|\le1$, $\rho_{A_0}\le2$ uniformly,
and $x>4\varepsilon+2\mathcal D_R$. Put
\begin{align*}
k_R&=4\varepsilon+2\mathcal D_R+4v+2h(|\lambda|+R)+2\eta,\\
J_R&=\int_0^R(s-\lambda)^2\dd s,\qquad
\mathcal A_R(k)=\min\{1,e^{J_R/2}
[2k\sqrt{2/(\pi R)}]^{1/2}\},\\
p_R(x,\lambda)&=\Pp(\min_{s\le R}
[x+\mathcal B(s)-\lambda s+s^2/2]>0).
\end{align*}
For every admissible cap $m>MRu_0$, the following explicit bound holds:
\begin{align}
|\Pp_{\Simple}(\tau>Ru_0)-p_R(x,\lambda)|
\le{}&3\mathcal V_m+2\mathcal O(m,MRu_0)
       +\mathcal E_R/\varepsilon^2\nonumber\\
&+Cq^3R/(a_0\eta^3)+3q^2\mathcal C_R/\eta^2
       +C(Rh+R/a_0^2)/v^4\nonumber\\
&+4q e^{-v^2/(2h)}+\mathcal A_R(k_R).
\label{eq:qCompactBound}
\end{align}
The right side can always be truncated at one.

Here is the complete comparison proof. The loss decomposition gives the
leading field plus the above residual. A positive latent infimum beyond
the residual and free-end error excludes stopping by evaluation at
$Q(\tau)=-1$. Survival implies a nonnegative actual queue at every mesh
point. This physical sandwich costs at most $2\varepsilon+\mathcal D_R$;
it never assigns a physical queue after extinction.
For each leading mesh vector,
$\E\|X_i\|_1^3\le2q^3(d_i/G)^3p_i(Ru_0)$.
The sum of second weights times $p_i$ is at most $2R$, and one extra
weight is at most $\Delta/G\le2/a_0$. Thus their total third moment is
$Cq^3R/a_0$. Matching Gaussian vectors obey the same bound, using
$\|z\|_1^3\le q^2\sum_j|z_j|^3$ and the Gaussian marginal third moment.
Equations~\eqref{eq:qVectorReplacement}--\eqref{eq:qGaussianInterpolation}
give the two Gaussian comparison terms.

On a mesh cell the leading crossing variance is at most $2h$ and its
fourth-weight sum at most $8h/a_0^2$. Lemma~\ref{lem:qFourth}, Markov,
and a union over cells give $C(Rh+R/a_0^2)/v^4$ for an oscillation larger
than $v$. Reflection bounds Brownian oscillations by
$4q e^{-v^2/(2h)}$; the Gaussian tail bound follows by optimizing the
increment moment $\E e^{t\mathcal B(h)}=e^{t^2h/2}$.
The drift oscillates by at most $h(|\lambda|+R)$, and product smoothing
moves each positivity threshold by at most $\eta$.

Finally the continuous drifted minimum lies in $[-k,k]$ with probability
at most $\mathcal A_R(k)$. The density in Lemma~\ref{lem:Brownianmin},
with derivative $s-\lambda$, has second moment $e^{J_R}$.
Reflection gives density at most $\sqrt{2/(\pi R)}$ for the minimum of
$x+\mathcal B$; Cauchy--Schwarz therefore gives the displayed $\mathcal A_R$.
All mesh and smoothing ambiguities fit in $k_R$. This continuous-minimum
bound avoids an insufficient union bound on mesh-coordinate densities.
Average over exact fixed-size seeds and transfer only the physical
survival event by~\eqref{eq:qEventTransfer}. This proves~\eqref{eq:qCompactBound}.

\subsection{Quantitative tails and the full bounded-layer rate}
For the graph rates in this subsection retain $d\asymp\log^2 n$.
For $|\lambda|\le L$, $a/a_0\le X$, $32(L+1)\le R\le S=d$,
the margin~\eqref{eq:Xmargin} and~\eqref{eq:commonDyadic} with
$h=Ru_0$, $H=Su_0$ give, using~\eqref{eq:scaleidentities},
\begin{equation}\label{eq:qXtail}
\Pp_{\Conf}(\tau>Ru_0,\ \tau\le Su_0)
\le C[R^{-3}+d^2u_0R^{-2}
 +(r_Da/a_0^2+G^{-2})R^{-4}].
\end{equation}
The seed term is quantitative, not a discarded deterministic $o(1)$.
Transfer this real event with $m=\lceil4MSu_0\rceil$ and
\eqref{eq:qEventTransfer}; then $\mathcal V_m=O_E(db_0+d^3/n)$.
For Brownian motion the same dyadic Doob bound, using drift at least
$s^2/4$ for $s\ge4(L+1)$, gives
\begin{equation}\label{eq:qBrownianTail}
0\le p_R(x,\lambda)-p(x,\lambda)\le C_LR^{-3}\quad(x\ge0).
\end{equation}
Indeed on $[h,2h]$, $\E\sup_{s\le2h}|\mathcal B(s)|^2\le8h$;
defeating $h^2/4$ costs $C/h^3$, summable geometrically.

For this logarithmic-degree rate, repeat the dyadic proof of
Lemma~\ref{lem:rangefinal} from $du_0$ rather than $h_0$.
Here $d^2(du_0)=O(db_0)=o(1)$, the initial vertex exponent is
$nK^2(du_0)^3=d^3$, and the seed exponent is larger; the same
$Cn^2$ grid prefactor applies. The final exponent is $cn$.
At $U$ take terminal tolerance $n^{2/3}d$; its Hoeffding and sampled-seed
exponents are $cn^{1/3}$, which beats $d^2$ in this restricted regime.
The cap has $md/n=O(db_0)=o(1)$ and a stop before it gives $T=o(n)$.
Thus the full separate late/terminal envelope is
\begin{equation}\label{eq:qXlate}
\mathcal L_n=e^{C_sd^2}
[Cn^2e^{-c_1d^3}+Cd e^{-c_2n}+4e^{-c_3n^{1/3}}].
\end{equation}
These are already small unconditional errors divided by simplicity;
the order-one early probability is transferred by TV. Early stop with
no overflow gives $T\le a+m<n/2$ eventually; late survival gives the
terminal inactive-end bound. Add~\eqref{eq:qCompactBound},
\eqref{eq:qXtail}, \eqref{eq:qBrownianTail}, \eqref{eq:qXlate}; their
joint transfer and overflow can conservatively be covered by
$10\mathcal V_m+8\mathcal O(m,MSu_0)$.

For a concrete rate, fix $X>0$, $|\lambda|\le L$, and fixed
logarithmic-degree constants; allow $8d^{-1/8}\le x=a/a_0\le X$. Take
\[
R=(\log d/1000)^{1/3},\quad q=\lceil R d^{1/8}\rceil,
\quad\varepsilon=\eta=d^{-1/8},\quad v=d^{-1/64},\quad S=d.
\]
Here $\mathcal D_R=O_{X,L}(u_0[R+R^2+dR^3])=o(d^{-1/8})$, so
$x>4\varepsilon+2\mathcal D_R$ holds throughout this shrinking range.
All other conditions and bounds use only upper bounds for $x,|\lambda|$:
$r_Da/a_0^2=r_Dx/a_0$ decreases with $x$, and the minimum
anti-concentration factor has no positive lower-height dependence.
The residual, replacement, covariance, and fourth-moment terms are
\begin{align*}
&O(Rd^{-3/4}+d^{1/4}d^2u_0R^2+d^{1/4}r_DX/a_0),\\
&O(R^4d^{3/4}/a_0),\\
&O(R^3d^{-1/2}+R^4d^{1/2}\Delta u_0),\\
&O(Rd^{-1/16}+Rd^{1/16}/a_0^2).
\end{align*}
Brownian oscillations cost at most
$CRd^{1/8}e^{-cd^{3/32}}$. The deterministic error is negligible compared
with $v$, so $k_R=O(v)$. Eventually $J_R\le R^3$, making
$\mathcal A_R(k_R)\le Cd^{1/2000-1/128}R^{-1/4}$.
All these terms, TV, overflow, and late errors are smaller than
$R^{-3}=1000/\log d$. Thus eventually along admissible arrays,
\begin{equation}\label{eq:seedabsolute}
\sup_{\substack{8d^{-1/8}\le a/a_0\le X\\|\lambda|\le L}}
|\Pp_{\Simple}(T_a\ge n/2)-p(a/a_0,\lambda)|
\le C_{X,L,E}/\log d
\end{equation}
In particular, on every fixed positive compact set of seed coordinates,
\begin{equation}\label{eq:qXrate}
|\Pp(T\ge n/2)-p(a/a_0,b/b_0)|\le C/\log d.
\end{equation}
Numerical evaluation of this bound requires the enumeration constant;
its growing-horizon density factor prevents automatic uniformity in growing $L$.

\subsection{Quantitative exact-center Gaussian localization}\label{subsec:Gloc}
This proof uses a scalar replacement and a local modulus, rather than a
functional CLT. Let $b>0$, $t_b=b/K$, $w=\sqrt{nt_b}$, $\sigma=dw$,
$\Lambda=nb^3/K$, and $L_*=16$. Assume the finite derivative bounds~\eqref{eq:curvfinite},
$L_*t_b\le1/(8\Delta)$, $\adet\le1/2$ and $1-L_*t_b\ge1/2$.
The exact double zero and two integrations give the explicit curvature
\begin{equation}\label{eq:qGcurvature}
D_{n\adet}(u_s)/\sigma\ge\kappa\sqrt\Lambda(s-\widehat s)^2,
\quad\kappa=1/16,\quad u_s=-\log(1-t_bs),\quad
\widehat s=\widehat z/t_b.
\end{equation}
For the actual coordinate $x=(a-n\adet)/w$, $|x|\le X$, the sensitivity
error is at most
\begin{equation}\label{eq:qGdrift}
\mathcal D_G=Xt_b[2L_*+bL_*^2/2],
\end{equation}
since $1-(1-z)B(z)\le z+F(z)\le2z+Kz^2/2$.

At the bottleneck define the actual, computable variance bounds
\begin{align}
p_i^*&=\pi[1-(1-\widehat z)^{d_i}],\nonumber\\
\theta_+&=\sigma^{-2}\sum_i d_i^2p_i^*(1-p_i^*),\qquad
\theta_- =\theta_+-a\Delta^2\max_i p_i^*/\sigma^2,\nonumber\\
\zeta_G&=\max\{|\theta_+-1|,|\theta_--1|\}.
\label{eq:qGvariance}
\end{align}
For every seed set the leading normalized point variance is between these
endpoints. If $[\theta_-,\theta_+]\subset[1/2,2]$, Lemma~\ref{lem:qScalar}
gives Kolmogorov error at most $C[(\Delta/\sigma)^{1/4}+\zeta_G]$.
Expanding $p_i^*$ at the actual $\widehat z$ and using exact mean
cancellation and~\eqref{eq:Dleading} gives
$\zeta_G\le C_{c,C}(d^{-1}+b/d+a/n)$.

Conditional on the seed set $A_0$, define for $0\le s\le L_*$
\begin{equation}\label{eq:Gleadingprocess}
\mathcal A_n(s)=\sum_{i\notin A_0}d_i[B_i^*(u_s)-p_i(s)],\qquad
p_i(s)=\pi[1-(1-t_bs)^{d_i}],\qquad
A_n(s)=\mathcal A_n(s)/\sigma.
\end{equation}
Here $B_i^*$ is the threshold-one indicator from~\eqref{eq:loss}, and
$u_s=-\log(1-t_bs)$ as above. The nested bounds give
\begin{equation}\label{eq:qGmaxmodulus}
\E\sup_{s\le L_*}|A_n(s)|^2\le32L_*,
\qquad
\E\sup_{|s-\widehat s|\le\delta}
|A_n(s)-A_n(\widehat s)|^2\le64\delta.
\end{equation}
For the first bound use
$\pi\sum_i d_i^3/(nd^2)\le2$. For the local constant, apply the increment
lemma on $[\widehat s,\widehat s+\delta]$, costing at most $32\delta$.
On the left, reverse the crossing times within
$[\widehat s-\delta,\widehat s]$: the increment at $\widehat s-v$ is the
negative of a nested sum of those crossings in the last $v$ units.
These reversed defective crossing times remain independent across vertices,
and the same derivative bound costs $32\delta$. The squared maximum over
both sides is at most their two squared maxima, giving 64. This avoids
silently enlarging the claimed constant by subtracting two global suprema.

All remaining normalized suprema, including the centered threshold-two
terms, exact sampled seed mean and stopped free-end/reservation error,
have a squared budget
\begin{equation}\label{eq:qGresidual}
\mathcal E_G=C_{L_*}[d^{-1}+b+r_Da/(nt_b)+\sigma^{-2}].
\end{equation}
Consequently their combined size exceeds $\nu$ with probability at most
$\mathcal E_G/\nu^2$. This follows by dividing
$Cn(dt_b+d^4t_b^2)$, $\Delta^2r_Da$, and $CMu_L+C$ by $\sigma^2$
and applying the squared triangle inequality. With
$\Delta/d\le2$, $K\ge d^2/16$, and $u_L\le\log2$, one conservative
choice is $C_{L_*}=10^5L_*^2$.

Take $0<\delta,\nu<1/4$ with the neighborhood inside $[0,L_*]$ and
\begin{equation}\label{eq:qGcondition}
\nu+\mathcal D_G\le\kappa\sqrt\Lambda\delta^2/4.
\end{equation}
Use $H=\kappa\sqrt\Lambda\delta^2/4$ in the global bound
\eqref{eq:qGmaxmodulus}. Its failure probability is
$512L_* /(\kappa^2\Lambda\delta^4)$.
On its good event the deterministic penalty outside the neighborhood
beats both leading noises and the residual/sensitivity errors when compared
with the bottleneck value. Inside, the local modulus exceeds $\nu$ with
probability at most $64\delta/\nu^2$. Thus the latent minimum differs
from $x+A_n(\widehat s)$ by at most a constant times $\nu+\mathcal D_G$,
except for the displayed failure events. A positive latent minimum excludes
stopping by evaluation at $Q(\tau)=-1$; survival implies a nonnegative
actual queue at the deterministic bottleneck. Only the stopped free-end
error enters these two comparisons. Scalar normal replacement and normal
anti-concentration now prove
\begin{align}
|\Pp_{\Conf}(\tau>u_L)-\Phi(x)|\le{}&
C[(\Delta/\sigma)^{1/4}+\zeta_G+\nu+\mathcal D_G]\nonumber\\
&+(\mathcal E_G+64\delta)/\nu^2
 +512L_* /(\kappa^2\Lambda\delta^4).
\label{eq:qGfinite}
\end{align}

Choose $\nu=\max\{\Lambda^{-1/13},\mathcal E_G^{1/3}\}$,
$\delta=\nu^3$. Then $\delta/\nu^2=\nu$,
$\mathcal E_G/\nu^2\le\nu$, and $1/(\Lambda\delta^4)\le\nu$.
Condition~\eqref{eq:qGcondition} is eventual because
$\sqrt\Lambda\nu^5\ge\Lambda^{3/26}\to\infty$ and
$\mathcal D_G/\nu\to0$: $t_b=O(d^{-3})$ while
$\nu\ge c d^{-1/3}$. The neighborhood remains internal since
$\widehat s\to1$. We obtain
\begin{equation}\label{eq:qGearly}
|\Pp_{\Conf}(\tau>u_L)-\Phi(x)|
\le\mathcal G_X:=C_X[\Lambda^{-1/13}+\mathcal E_G^{1/3}
 +w^{-1/4}+\zeta_G+\mathcal D_G].
\end{equation}
Under the hypotheses of Lemma~\ref{lem:Gloc}, the envelope gives
$b\le(c+1)/d$ and $d^3/n\to0$ (Lemma~\ref{lem:rangehorizon}).
The right side tends to zero uniformly for bounded $x$:
$w=\sqrt\Lambda/b\to\infty$ and
$r_Da/(nt_b)=O_X(r_Db+r_D/w)\to0$.
This proves Lemma~\ref{lem:Gloc}.
For simple graphs add~\eqref{eq:qEventTransfer} with
$m=\lceil4L_*Mt_b\rceil$; its exact horizon is $u_L\le2L_*t_b$,
so its cost is $O_E(b+d^3/n)$ plus the exact overflow.

\subsection{Quantitative consistency of the Brownian function}
Brownian scaling and completing the square give
\[
p(\lambda^2/2+t\sqrt\lambda,\lambda)=
\Pp\!\left(t+\inf_{r\ge0}
[\mathcal B(r)+\lambda^{3/2}(r-1)^2/2]>0\right).
\]
On $[0,2]$, the preceding localization proof has exact normal offset
$\mathcal B(1)$, curvature of order $\lambda^{3/2}$, and no residual.
Its global, local, and anti-concentration errors are
$C/(\lambda^3\delta^4)+C\delta/\nu^2+C\nu$.
For $r\ge2$, divide into $[2^j,2^{j+1}]$. Doob gives probability at most
$C\lambda^{-3}2^{-3j}$ that Brownian motion defeats half the quadratic
penalty. Also
$\Pp(|\mathcal B(1)|>\lambda^{3/2}/16)\le C\lambda^{-3}$.
Outside those failures the tail value exceeds the bottleneck value.
Taking $\nu=\lambda^{-3/13}$, $\delta=\nu^3$ therefore proves
\begin{equation}\label{eq:qConsistency}
\sup_{|t|\le X}|p(\lambda^2/2+t\sqrt\lambda,\lambda)-\Phi(t)|
\le C_X\lambda^{-3/13}.
\end{equation}
This concerns the Brownian function only. A final-cascade graph bridge
requires the separate proof that follows.

\section{The positive Gaussian bridge and corrected sequential law}\label{sec:bridge}
We now prove Theorem~\ref{thm:B} and Corollary~\ref{thm:C}. Throughout the
positive-layer argument $b>0$, and
$\Lambda\to\infty$. First keep the exact coordinate
$x=(a-n\adet)/w$ in a fixed bounded interval. By~\eqref{eq:Dleading},
$n\adet/w\ge\sqrt\Lambda/2$, so these sizes are positive and, eventually,
\begin{equation}\label{eq:Bseedsize}
a\le2nb^2/K,\qquad a=o(n).
\end{equation}
For instance $\adet\le2A$ and $\Lambda\ge16X^2$ give this conservative
bound. No assumption $a=O(a_0)$ is made.

\subsection{A summable real-event bridge to a growing early horizon}
Set $L=16$, $u_L=-\log(1-Lb/K)$ and
\[
h_* =\max\{u_L,h_0\},\qquad E_* =\{\tau>h_*\}.
\]
Both terms are $o(d^{-1})$, and $d^2h_*=O(b+\theta)=o(1)$.
Since $z(u_L)=Lb/K\ge8b/K$, the deterministic calculation in
Lemma~\ref{lem:rangefinal} gives
\begin{equation}\label{eq:Bmargin}
D_a(u)\ge ndKu^2/64\quad(u_L\le u\le\eta/d).
\end{equation}
If $h_*>u_L$, apply~\eqref{eq:commonDyadic} on $[u_L,h_*]$. Its
single-interval bound, with the exact sampled-seed term retained, is
\begin{equation}\label{eq:Bdyadic}
C\left[\frac{1+1/d}{nK^2h^3}+\frac{d^2}{nK^2h^2}
 +\frac{r_Da}{n^2K^2h^4}+\frac1{n^2d^2K^2h^4}\right].
\end{equation}
The geometric sum, $u_L\ge Lb/K$ and~\eqref{eq:Bseedsize} give
\begin{equation}\label{eq:Bsum}
\mathcal B_X=C_X\left[\frac{1+1/d}{L^3\Lambda}
 +\frac{(d^2/K)b}{L^2\Lambda}+\frac{r_Db}{L^4\Lambda}
 +\frac{b^2}{d^2L^4\Lambda^2}\right]\le C_X/\Lambda.
\end{equation}
Set $\mathcal B_X=0$ for an empty interval. Then
$\Pp_{\Conf}(\tau>u_L,\tau\le h_*)\le\mathcal B_X$.
Since $r_Db=O(d^{-1/2})$, the seed term is $O_X(r_Db/\Lambda)$,
even for diverging $a/a_0$, with no interval-count factor.

Use $m=\lceil4Mh_*\rceil$ and the capped transcript already proved.
It recognizes the two survival events and their difference, apart from
overflow. Write
\[
\mathcal O_* =\mathcal O(m,Mh_*)\le e^{-km},\qquad
\mathcal V_* =\mathcal V_m=O_E(b+\theta+d^3/n+d/n)=o(1).
\]
Indeed $md/n=O(d^2h_*+d/n)$, $m=o(n/d)$, and all residual enumeration
conditions and denominators hold eventually. Only real early events
are transferred. The unrestricted latent field used to derive the maximal
probabilities is not conditioned on simplicity.

\subsection{Late passage and the complete Gaussian ledger}
The envelope implication in Lemma~\ref{lem:rangehorizon} gives
$d^4/n\to0$ from $\Lambda\to\infty$. Thus the common late argument
in Lemma~\ref{lem:rangefinal} applies, including the full fixed-size
sampled-seed exponent. It gives
\begin{equation}\label{eq:Blate}
\Pp_{\Simple}(E_*,\tau\le U)
\le e^{C_sd^2}\{Cn^2e^{-k\max(\Lambda,\Psi)}+Cd e^{-kn}\}=o(1),
\end{equation}
and the terminal bound
\begin{equation}\label{eq:Bterminal}
n-T_a\le Cn\{d2^{-d+C\sqrt d}+\varepsilon\}=o(n)
\end{equation}
outside the stated late and terminal bad events. On early stopping,
$T_a\le a+m=o(n)$ outside simple-law overflow
$\mathcal O_*+\mathcal V_*$. No stronger terminal-size rate is inherited.

Lemma~\ref{lem:Gloc} gives early Gaussian survival, with the
budget~\eqref{eq:qGearly} valid for the full positive layer:
$w=\sqrt\Lambda/b\to\infty$ and, by
\eqref{eq:Bseedsize}, $r_Da/(nt_b)\le2r_Db=O(d^{-1/2})$.
Combining it with the real-event bridge, capped transfer, and both
directions of terminal passage proves, uniformly for bounded actual
$x=(a-n\adet)/w$,
\begin{align}
|\Pp(T_a\ge n/2)-\Phi(x)|\le{}&
\mathcal G_X+\mathcal B_X+10\mathcal V_*+8\mathcal O_*\nonumber\\
&+e^{C_sd^2}\{Cn^2e^{-k\max(\Lambda,\Psi)}
+Cd e^{-kn}+4e^{-k\sqrt{nJ}}\}.
\label{eq:Bledger}
\end{align}
Every term tends to zero even if $\Lambda$ diverges arbitrarily slowly.
The finite localization, derivative, denominator, and margin conditions
in~\eqref{eq:qTVfinite}--\eqref{eq:qGearly} precede use of this ledger;
the enumeration constant remains symbolic. The term $\Psi$, rather than
$\Lambda$ alone, makes the late simplicity division valid.

\subsection{All seed sizes, exact correction, and subsequences}
Let $f_n(a)=\Pp(T_a\ge n/2)$ and $y_n(a)=(a-n\adet)/w$.
Uniform seed sets coupled as prefixes of an independent permutation make
$f_n$ nondecreasing. In the positive layer the reference sizes
$\lfloor n\adet\pm Xw\rfloor$ are positive and below $n$ for every fixed
$X$ eventually, because $n\adet/w\to\infty$ and the center is $o(n)$.
Above $y=X$, compare from below with the positive reference; below
$y=-X$, compare from above with $\lceil n\adet-Xw\rceil$.
The bounded-window ledger and rounding $1/w\to0$ give
\[
\limsup_n\sup_{0\le a\le n}|f_n(a)-\Phi(y_n(a))|\le\Phi(-X).
\]
Let $X\to\infty$. This proves the all-seed assertion of
Theorem~\ref{thm:B} without assuming a CLT uniform in unbounded coordinates.
The same monotone endpoint argument applied to~\eqref{eq:qConsistency}
gives
\begin{equation}\label{eq:BfunctionUniform}
\sup_{t\in\mathbb R}|p(\lambda^2/2+t\sqrt\lambda,\lambda)-\Phi(t)|
\longrightarrow0\quad(\lambda\to+\infty).
\end{equation}

For bounded $\lambda$ on the positive side, the exact barrier gives
\begin{equation}\label{eq:BboundedCorrection}
0\le n\adet/a_0-\lambda^2/2\le C\lambda^3/(a_0d)\to0.
\end{equation}
On the nonpositive side $\adet=0$ exactly. Therefore
$x_{\mathrm{eff}}=(a-n\adet)/a_0+\lambda_+^2/2=a/a_0+o(1)$ uniformly
over sizes when $\lambda$ stays in a compact set.
The function $p$ is jointly continuous for positive $x$ and finite
$\lambda$: finite-horizon no-atom continuity follows from
Lemma~\ref{lem:Brownianmin}, and the uniform dyadic tail
\eqref{eq:qBrownianTail} removes the horizon. As $x\downarrow0$,
comparison with $p(x,-L)$ and immediate Brownian negative excursions
give zero uniformly on compact $\lambda$ sets. As $x\to\infty$,
comparison with $p(x,L)$ and the almost surely finite parabolic minimum
give one uniformly. Extend $p=0$ for nonpositive $x$.

For bounded scaled deficits, Theorem~\ref{thm:X} covers each subsequential
limit $a/a_0\to x$ with $x=0$, positive finite, or infinity. Together with
\eqref{eq:BboundedCorrection} and those continuity/tail facts it gives
the corrected approximation uniformly over every integer $a$.
Formally, any violating seed sequence has a subsequence with $\lambda$
convergent and then a further subsequence with $a/a_0$ convergent in
$[0,\infty]$; all three limits contradict the violation.

For $\lambda>0$, the exact identity $w=a_0\sqrt\lambda$ yields
\[
x_{\mathrm{eff}}=\lambda^2/2+y_n(a)\sqrt\lambda.
\]
The separately proved graph Gaussian law and
\eqref{eq:BfunctionUniform} therefore give the uniform corrected law
when $\lambda\to+\infty$. Finally, if $\lambda\ge-L_0$ is arbitrary,
every subsequence has a further finite-$\lambda$ subsequence or a
$+\infty$ subsequence. Both have just been proved. This proves
Corollary~\ref{thm:C}. At $a=0$, $x_{\mathrm{eff}}\le0$ by
$\adet\ge b^2/(2K)$ or by exact zero center; both probabilities are
exactly zero. This completes the sequential absolute-error proof.

\subsection{A strict counterexample to the uncorrected formula}
For an admissible $b\sim\beta/d^2$, $\beta>0$ and logarithmic degrees,
$T_3\sim-2d^3$. The proved barrier expansion gives
\begin{equation}\label{eq:BnaiveCorrection}
\adet-A=-T_3b^3/(6K^3)+b^3/(2K^2)+O(b^4/d^4)
\sim b^3/(3d^3)>0.
\end{equation}
The denominator term is smaller by $1/d$ and the remainder by $b/d$;
the positive leading correction is attained, not merely bounded above.
At $a=\lfloor n\adet\rfloor$ the true exact coordinate tends to zero,
so the graph probability tends to $1/2$. The naive Brownian argument has
coordinate
\[
\frac{a/a_0-\lambda^2/2}{\sqrt\lambda}
=\frac{n(\adet-A)}w+o(1)
\sim\frac{\beta^{5/2}\sqrt n}{3d^7}\longrightarrow+\infty.
\]
Equation~\eqref{eq:BfunctionUniform} gives $p(a/a_0,\lambda)\to1$,
so the uncorrected formula has limiting error $1/2$.
Proposition~\ref{prop:realize} supplies such graphical tuned examples
under $C^2>c+1$. The exact-center correction in Corollary~\ref{thm:C}
is therefore essential.

\section{A restricted quantitative tuned subregime}\label{sec:S}
\begin{theorem}[S: tuned quantitative subregime]\label{thm:S}
Assume~\eqref{eq:envelope}, $b\sim\beta/d^2$ for fixed $\beta>0$,
and $n/d^{10}\to\infty$. Put $A=b^2/(2K)$ and $w=\sqrt{nb/K}$.
For every fixed $\gamma\in(0,1)$ and every admissible deterministic
sequence of seed sizes,
\begin{align*}
a\le(1-\gamma)nA&\ \Longrightarrow\ T=O(n/d^4)
\quad\text{with probability tending to one},\\
a\ge(1+\gamma)nA&\ \Longrightarrow\ n-T=
O\bigl(n[d2^{-d+C\sqrt d}+n^{-1/3}]\bigr)
\quad\text{with probability tending to one}.
\end{align*}
For every fixed $x$, $a=\lfloor n\adet+xw\rfloor$ gives
$\Pp(T\ge n/2)\to\Phi(x)$, with the quantitative terminal
budget~\eqref{eq:qSrate}. The stronger terminal-size bounds and that
budget retain $n/d^{10}\to\infty$, although the probability-window
assertion alone follows from B under $n/d^8\to\infty$.
\end{theorem}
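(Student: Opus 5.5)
The plan is to prove the three assertions separately. In this tuned regime $K\sim d^2$, $t_b=b/K\sim\beta d^{-4}$, $A=b^2/(2K)\sim\beta^2/(2d^6)$ and $\Lambda=nb^3/K\sim\beta^3n/d^8$, so $n/d^{10}\to\infty$ gives $\Lambda/d^2\to\infty$. This is the one extra fact needed throughout. Every bad event will be bounded under the configuration law by $e^{-k\min(\Lambda,\,n/d^4,\,n^{1/3})}$, times at most polynomial grid prefactors as in Lemma~\ref{lem:interval}. Since $n/d^{10}\to\infty$ makes all three exponents much larger than $d^2$, such a bound survives division by $s\ge e^{-C_sd^2}$ from~\eqref{eq:simpleprob}. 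Under this route the simple-graph statements follow directly, and the capped transcript transfer of Lemma~\ref{lem:TV} is needed only for the window law.

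\emph{Subcritical side.} Take $a\le(1-\gamma)nA$ and evaluate~\eqref{eq:Da} at $z=\widehat z$, using Theorem~\ref{thm:D}. This gives $D_a(\widehat u)\le nd\,(a/n-\adet)(1+o(1))\le-\gamma ndA/2$, since $\adet\ge A$. Apply Lemma~\ref{lem:interval} at the single point $\widehat u$ with $e=\gamma ndA/10$. The dominant variance is $nd^2t_b$, so the exponent is of order $(ndA)^2/(nd^2t_b)\asymp\Lambda$. The seed and free-end exponents are larger, as in~\eqref{eq:Bexponents}. Off this bad event the latent value $W-H$ at $\widehat u$ is negative. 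The queue identity~\eqref{eq:stop} then forces $\tau\le\widehat u\le2t_b$, because a surviving queue is nonnegative.

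Every activated nonseed vertex is one whose latent clocks satisfy $X_i(2t_b)\ge R_i$. I will count these vertices with the indicator families of~\eqref{eq:loss}. Their mean is at most $n[\pi d\cdot2t_b+d^2(2t_b)^2]=O(n/d^4)$, and Bernstein with bounded summands gives a deviation of order $n/d^4$ with probability $e^{-kn/d^4}$. Adding $a=O(n/d^6)$ gives $T=O(n/d^4)$ outside a configuration event small enough to divide by $s$.

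\emph{Supercritical side.} Take $a\ge(1+\gamma)nA$. By monotonicity in $a$ (prefix coupling) it suffices to treat $a=\lceil(1+\gamma)nA\rceil$, which satisfies $a\le2nb^2/K$. Two deterministic bounds hold. On $[0,u_L]$ one has $D_a(u)\ge nd[(1+\gamma)A-\adet](1-o(1))\ge\gamma ndA/2$. On $[u_L,U]$, \eqref{eq:Bmargin} and the margin in Lemma~\ref{lem:rangefinal} apply. The dyadic/grid argument of Lemma~\ref{lem:rangefinal} then runs from time $0$ rather than from $h_*$, with all exponents at least $k\Lambda$. Its first interval has exponent of order $\Lambda$, again by the computation above, so $\Pp_{\Conf}(\tau\le U)\le Cn^2e^{-k\Lambda}+Cde^{-kn}$. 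At time $U$ I take terminal tolerance $e_T=n^{2/3}d$ in place of $\varepsilon nd$. The vertex Hoeffding and seed Azuma exponents become $kn^{1/3}\gg d^2$. The inactive-end argument of that lemma then gives $n-T\le Cn[d2^{-d+C\sqrt d}+n^{-1/3}]$.

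\emph{Window law and budget~\eqref{eq:qSrate}.} For $a=\lfloor n\adet+xw\rfloor$ the limit is Theorem~\ref{thm:B}, whose hypotheses hold since $\Lambda\to\infty$. The quantitative budget comes from substituting the tuned orders into the ledger~\eqref{eq:Bledger}. The individual terms are $\mathcal G_X$ from~\eqref{eq:qGearly}, with $\Lambda^{-1/13}$, $\mathcal E_G^{1/3}=O(d^{-1/3})$, $w^{-1/4}$, $\zeta_G=O(1/d)$ and $\mathcal D_G=O(d^{-4})$; then $\mathcal B_X=O(1/\Lambda)$, $\mathcal V_*=O_E(b+\theta+d^3/n)$, and the exponentially small late terms.

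I expect the main obstacle to be the subcritical count. A vertex can be activated by an edge whose clock rings before $\tau$ while the partner vertex's latent clocks do not literally coincide with the physical revealed marks. I will argue that every physical activation by time $\tau$ is witnessed by $X_i(\tau)\ge R_i$, by the same monotonicity used for~\eqref{eq:latentH}. That makes the latent count an upper bound which is measurable before any conditioning, so it can be divided by simplicity rather than transferred by TV.
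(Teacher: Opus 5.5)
Your proposal is correct. For the two fixed-gap assertions it is essentially the argument of Section~\ref{sec:S}:
\begin{itemize}
\item a negative margin at the bottleneck, plus single-point concentration, forces $\tau$ before it;
\item the all-vertex latent count of $\ind_{\{X_i\ge R_i\}}$ is controlled by Bernstein;
\item a positive early budget of order $ndA$ on $[0,u_L]$ is joined to the dyadic and final margins of Lemma~\ref{lem:rangefinal};
\item the terminal tolerance is $n^{2/3}d$;
\item every error, of size $e^{-k\min(\Lambda,n/d^4,n^{1/3})}$ up to polynomial prefactors, is divided by $s$.
\end{itemize}

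Where you differ on these parts, it is in your favor. Instead of the Taylor expansion~\eqref{eq:Sgapdrift} at $z=t_b$, you use the exact identity $h_{a/n}(z)=h_{\adet}(z)+(a/n-\adet)B(z)$, together with $h_{\adet}(\widehat z)=0$ and $h_{\adet}\ge0$ from~\eqref{eq:doublezero}. You should write this identity out. It is the only justification for your supercritical early margin, which you currently just assert.

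The genuine divergence is the window law. You cite Theorem~\ref{thm:B} and specialize~\eqref{eq:Bledger}. This is legitimate: B is proved independently of S and needs only $n/d^8\to\infty$.

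The paper instead reruns Lemma~\ref{lem:Gloc} with a cap at $u_L$ alone and proves late escape from $u_L$. It exploits $\Lambda\gg d^2$, so the auxiliary horizon $h_0$ of B is unnecessary. That is what produces~\eqref{eq:qSrate} verbatim, with transfer cost $O_E(b+d^3/n)$ and late envelope~\eqref{eq:qGlateS}.

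Your specialization carries two extra terms: $O_E(\theta)$ with $\theta=(KJ/n)^{1/6}$, and $C_X/\Lambda$. Both are $o((d^8/n)^{1/13})$ in this regime, so they are absorbed eventually. You should say so explicitly, rather than present the resulting ledger as~\eqref{eq:qSrate} itself.
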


Throughout this section $b\sim\beta/d^2>0$ and $n/d^{10}\to\infty$.
Write $t_b=b/K$, $\sigma=dw$, $\Lambda=nb^3/K$, and $L_*=16$. Then
\begin{equation}\label{eq:Sorders}
t_b\asymp d^{-4},\quad A\asymp d^{-6},\quad
\Lambda\asymp n/d^8,\quad \Lambda/d^2\to\infty,\quad
w\asymp\sqrt n/d^2\to\infty,\quad
\frac{w}{nA}=\frac2{\sqrt\Lambda}\to0.
\end{equation}
Also $d^3/n\to0$, $nt_b/d^2\to\infty$, and $d^2=o(n^{1/3})$.

\subsection{Exact double zero and deterministic margins}
Set $h_\alpha(z)=\alpha+(1-\alpha)F(z)-z$. The maximizer in
Theorem~\ref{thm:D} satisfies the exact identities
\begin{equation}\label{eq:doublezero}
h_{\adet}(\widehat z)=h'_{\adet}(\widehat z)=0,
\qquad h_{\adet}(z)=B(z)[\adet-\mathcal R(z)]\ge0.
\end{equation}
For the derivative, $1-\adet=(1-\widehat z)/B(\widehat z)$ and
$g(\widehat z)=0$ in~\eqref{eq:g}. Let $z_L=L_*t_b$, $u_L=-\log(1-z_L)$.
On this interval $F''\ge K/2$, $\adet\to0$, and two integrations from the
double zero yield
\begin{equation}\label{eq:Scurvature}
D_{n\adet}(u)\ge kndK(z-\widehat z)^2\quad(0\le z\le z_L).
\end{equation}
Here real seed counts occur only as deterministic references.
For $a=O(nA)$, Taylor expansion and seed depletion give uniformly
\begin{equation}\label{eq:Sgapdrift}
h_{a/n}(z)=a/n-A+\tfrac K2(z-t_b)^2
+O(Ab/d+Ab/K)\quad(0\le z\le z_L).
\end{equation}
Thus a fixed relative negative gap gives a negative margin $-kA$ at $z=t_b$;
a fixed relative positive gap gives a positive early margin $kA$.

The deterministic margin proof in Lemma~\ref{lem:rangefinal} applies
from $z_L=16b/K$: $F-z\ge Kz^2/8$ until $\eta/d$, and has a fixed
positive lower bound thereafter through $z=1/2$. Consequently
\begin{equation}\label{eq:Slatemargins}
D_a(u)\ge kndKz^2\ (z_L\le z\le\eta/d),\qquad
D_a(u)\ge knd\ (\eta/d\le z\le1/2).
\end{equation}
These bounds hold for negative as well as positive window coordinates $x$.

\subsection{Specialization of the early Gaussian estimate}
For $a=\lfloor n\adet+xw\rfloor$ with fixed $x$,~\eqref{eq:Sorders}
gives $0<a<n$, $a=O(nA)$ and actual coordinate
$(a-n\adet)/w=x+O(1/w)$. Apply Lemma~\ref{lem:Gloc} and its
budget~\eqref{eq:qGearly}. Here
\[
r_Da/(nt_b)=O(r_Db)=O(d^{-3/2}),\qquad
\mathcal E_G=O(d^{-1}),\qquad \zeta_G=O(d^{-1}),
\]
while $\Lambda\asymp n/d^8\to\infty$ and $w\to\infty$.
Thus, for the physical early-survival event $E=\{\tau>u_L\}$,
$\Pp_{\Conf}(E)=\Phi(x)+o(1)$.
Use~\eqref{eq:qEventTransfer} with $m=\lceil4L_*ndt_b\rceil$:
$u_L\le2L_*t_b$, overflow is $e^{-\Omega(ndt_b)}$, and
$md/n+d^3/n=O(b+d/n+d^3/n)=o(1)$. Therefore
\begin{equation}\label{eq:Searly}
\Pp_\Simple(E)\longrightarrow\Phi(x).
\end{equation}
The late and terminal estimates below supply the additional conclusions
of S and the remaining terms in its quantitative budget.

\subsection{Late escape: budgets, exponents, and conditioning}
For fixed-gap comparisons use budget $e_0=\rho ndA$ on $[0,z_L]$.
The dyadic middle intervals start at $z_L$ and have endpoints
$h$ and $\min(2h,\eta/d)$.
Use budget $e_h=\rho ndKh^2$ on each, and $e_p=\rho nd$ on the final interval.
Here $\rho>0$ is fixed and sufficiently small relative to the deterministic
margins. The corresponding $u$ interval lengths are $O(h)$, because
$\dd u/\dd z=(1-z)^{-1}\le2$ throughout. All budgets tend to infinity,
so the reserved-end correction one is absorbed.

For $a=O(nA)$, the early variance bounds are
$V=O(nb)$, $a\Delta^2=O(nb^2)$, $Mr=O(nb/d)$ and
$\Delta e_0=O(nb^2)$. Lemma~\ref{lem:interval} therefore gives vertex,
seed-mean, and free-end exponents at least, respectively,
$k\Lambda,k\Lambda/b,kd\Lambda$. On a dyadic middle interval the respective
lower bounds are
\begin{equation}\label{eq:Sexponents}
k\frac{nd^4h^3}{1+d^2h},\qquad
k\frac{nd^6h^4}{b^2},\qquad
k\frac{nd^5h^3}{1+d^2h}.
\end{equation}
These are~\eqref{eq:Bexponents} with $K\asymp d^2$,
$a=O(nb^2/K)$ and equivalent $z,u$ intervals. Each increases with $h$;
at $h=z_L\asymp b/d^2$ it is at least the early lower bound.
The final interval has exponents at least $kn$.

The early grid has $O(1+d/b)$ points. Each middle grid has
$O(1+1/(dh))\le O(1+d/b)$ points, and there are
$O(1+\log(d/b))$ such intervals. The final grid has $O(1+d)$ points.
Thus one common failure bound is
\begin{equation}\label{eq:Slocalfailure}
\varepsilon_{\mathrm{loc}}=C(1+d/b)(1+\log(d/b))e^{-k\Lambda}.
\end{equation}
No independence between intervals is used. For window sizes use only middle
and final intervals and the margins~\eqref{eq:Slatemargins}; a stop there
would contradict the positive queue at $\tau$. Hence
\[
\Pp_\Conf(E,\tau\le U)\le\varepsilon_{\mathrm{loc}},\qquad
\Pp_\Simple(E,\tau\le U)\le e^{C_sd^2}\varepsilon_{\mathrm{loc}}=o(1).
\]
This division uses an exponentially small intersection error. It is separate
from the total-variation transfer of~\eqref{eq:Searly}.

\subsection{Terminal dichotomy and fixed gaps}
At $u_L$, the independent all-vertex indicators
$\ind_{\{X_i(u_L)\ge R_i\}}$ have mean sum at most
\[
\sum_i[\pi d_i z_L+\tbinom{d_i}{2}z_L^2]\le Cnt_b.
\]
Bernstein bounds their sum by $C'nt_b$ except with probability $e^{-knt_b}$.
Every nonseed activation completed before a stop $\tau\le u_L$ belongs to
that sum. Adding $a=O(nt_bb)$ gives
\begin{equation}\label{eq:Smallterminal}
\Pp_\Simple(E^c,T>C'nt_b)\le e^{C_sd^2-knt_b}=o(1).
\end{equation}
At $U=\log2$ take $\varepsilon_T=n^{-1/3}$ in the general terminal
calculation in Lemma~\ref{lem:rangefinal}, so $E_\ell=n^{2/3}d$.
Its conditional Hoeffding and sampled-seed bounds give
$\Pp(|H(U)-\bar H(U)|>2E_\ell)\le4e^{-kn^{1/3}}$ (apply the same
two-sided inequalities). The exact mean is
$\bar H(U)=(1-a/n)\sum_i d_i2^{-d_i}[1+(1-\pi)(d_i-1)]$.
Consequently on $\tau>U$, outside that bad event,
\begin{equation}\label{eq:residual}
\frac{\bar H(U)+2E_\ell}{\delta-1}
\le B_+n[d2^{-d+C\sqrt d}+n^{-1/3}]=o(n).
\end{equation}
Simplicity division preserves this terminal estimate since
$d^2=o(n^{1/3})$. It never assumes latent independence conditional on
survival or simplicity. Equations~\eqref{eq:Smallterminal}--\eqref{eq:residual}
and late escape give
$\Pp_\Simple(T\ge n/2)-\Pp_\Simple(E)\to0$.
Together with~\eqref{eq:Searly} this proves the Gaussian assertion.

For the negative fixed gap take
$a_- =\lfloor(1-\gamma)nA\rfloor$.
Equation~\eqref{eq:Sgapdrift} is at most $-kA$ at $z=t_b$.
Early concentration forces stopping before that point: otherwise the real
queue there would be less than $-1$. The same latent activation count then
gives $T=O(nt_b)=O(n/d^4)$ after the small error is divided by simplicity.
For the positive gap take $a_+=\lceil(1+\gamma/2)nA\rceil$.
There is a positive early margin and all later margins; concentration excludes
$\tau\le U$, giving~\eqref{eq:residual}. The common failure envelope is
\[
e^{C_sd^2}[\varepsilon_{\mathrm{loc}}+e^{-knt_b}+4e^{-kn^{1/3}}]=o(1)
\]
by~\eqref{eq:Sorders}. Finally an independent uniform vertex permutation
couples all uniform seed sets as nested prefixes on the same graph and
threshold assignment. Activation is monotone in seeds. Every prescribed
$a\le(1-\gamma)nA$ is below $a_-$, and every
$a\ge(1+\gamma)nA$ is above $a_+$ eventually; $nA\to\infty$ absorbs rounding.
The bounds transfer by monotonicity, including the trivial empty/full cases.
This completes Theorem~\ref{thm:S}.

\subsection{Quantitative terminal rate}
In the polynomial S range the separately proved late/terminal envelope is
\begin{equation}\label{eq:qGlateS}
\mathcal L_G=e^{C_sd^2}
[C(1+d/b)(1+\log(d/b))e^{-k\Lambda}
 +e^{-knt_b}+4e^{-kn^{1/3}}].
\end{equation}
Here $\mathcal E_G=O(d^{-1})$, $\zeta_G=O(d^{-1})$ and
$w^{-1/4}=O(d^{1/2}n^{-1/8})$. Therefore uniformly for $|x|\le X$,
\begin{align}
|\Pp(T\ge n/2)-\Phi(x)|\le{}&
C_X[d^{-1/3}+(d^8/n)^{1/13}+d^{1/2}n^{-1/8}]\nonumber\\
&+O_E(b+d^3/n)+\mathcal L_G+2\mathcal O.
\label{eq:qSrate}
\end{align}
The constants may depend on the fixed limiting $\beta$; $\mathcal O$
is~\eqref{eq:qOverflow} at the stated cap/horizon. Rounding a target
coordinate costs only $C/w$. The exponent $1/13$ comes from the localization choice in
Section~\ref{subsec:Gloc}. The budget includes the final macro event
in S's polynomial range.

\section{Nonempty ranges and the limitation of this proof route}
\label{sec:examples}
\subsection{Fixed-gap and regular examples}
Suppose $c+1-v\ge\kappa>0$. Identity~\eqref{eq:bexact}
then gives $b\asymp d^{-1}$ and $\Lambda\asymp n/d^5$. Our barrier
expansion gives
\[
\adet=\frac{b^2}{2K}(1+O(b/d)),\qquad
\frac{n\adet}{w}\sim\frac{\sqrt\Lambda}{2}.
\]
If $n/d^5\to\infty$, fixed relative offsets from this threshold
have B coordinates tending to minus or plus infinity. Consequently,
Theorem~\ref{thm:B} gives macro-event probabilities tending to zero or
one, respectively.

For an exactly regular sequence put
$\beta_R=(1-\pi)(d-1)-1=d-3+b$. Then
\[
B(z)=(1-z)^{d-2}(1+\beta_Rz),\qquad
\mathcal R(z)=1-\frac{(1-z)^{3-d}}{1+\beta_Rz}.
\]
The logarithmic derivative of the fraction on the right is
\[
\frac{d-3}{1-z}-\frac{\beta_R}{1+\beta_Rz}
=\frac{-b+\beta_R(d-2)z}{(1-z)(1+\beta_Rz)}.
\]
Here $b>0$ and $\beta_R>0$ eventually, so the unique maximum is at
$z_R=b/[\beta_R(d-2)]$. Consequently
\begin{equation}\label{eq:regularcenter}
n\adet=n\left[1-\frac{(1-z_R)^{3-d}}{1+\beta_Rz_R}\right]=C_R,
\qquad \frac{w}{w_R}\longrightarrow\sqrt{c+1},
\quad w_R=\sqrt{n/d^3}.
\end{equation}
For seeds $a=\lfloor C_R+xw_R\rfloor$ with fixed $x\in\mathbb R$,
the B coordinate tends to $x/\sqrt{c+1}$, since $w\to\infty$.
Thus Theorem~\ref{thm:B} yields
\[
\Pp_{\Simple}(T_a\ge n/2)\longrightarrow\Phi(x/\sqrt{c+1})
\qquad\text{when }n/d^5\to\infty.
\]

\subsection{Examples and the horizon boundary}
For integer $d=t\to\infty$ and even $n=2\lceil t^p/2\rceil$, $p>4$,
the exactly regular sequence is graphical by the cyclic construction in
Lemma~\ref{lem:graphical}. It has
\[
b=(c+1)/d-c/d^2,\quad K\sim d^2,\quad
\lambda\sim(c+1)d^{(p-5)/3},\quad
\Lambda\sim(c+1)^3d^{p-5}.
\]
For $4<p<5$ it realizes the Brownian law with $\lambda\to0$; for $p=5$
it realizes finite positive $\lambda$; for $p>5$ it realizes the Gaussian
layer. More generally $|b|=O(d^{-1})$ implies
$|\lambda|=O((n/d^5)^{1/3})$. Thus every envelope array with
$d\asymp n^\kappa$, $1/5<\kappa<1/4$, has $\lambda\to0$:
X/C are substantive, whereas B is empty there.

Under the strict variance-capacity margin $C^2>c+1$,
Proposition~\ref{prop:realize} attains actual
$b\sim\beta/d^2$, $\beta>0$, with arithmetic error $O((nd^2)^{-1})$.
Here $\Lambda\sim\beta^3n/d^8$. Consequently B proves the exactly
centered terminal probability window under $n/d^8\to\infty$, including
$n\asymp d^p$, $8<p\le10$, outside the S terminal-size regime.
No $O(n/d^4)$ early residue or $O(n^{2/3})$ late residue is transferred
to that whole new range.

Arbitrarily slow degree divergence also satisfies X/C. For example
$d=\lfloor\log\log\log n\rfloor$ with even $n$ gives
$J\asymp\log n$ and $\Psi/J\asymp\sqrt{n/(d^2\log n)}\to\infty$.
The covariance errors still vanish because $d\to\infty$; fixed degree
is excluded. Near the upper range, $n=d^4L$ with $L\to\infty$ and
$\log(n+2)=o(d^2)$ gives
\[
\theta\asymp L^{-1/6},\quad h_0\asymp d^{-2}L^{-1/6},\quad
q_0\asymp d^{2/3}L^{1/6},\quad \Psi\asymp d^2\sqrt L.
\]
Hence the conditioning exponent beats $d^2$ even for arbitrarily slow
$L\to\infty$.

\begin{proposition}[A barrier of the stated proof route]
\label{prop:routebarrier}
If $n/d^4=O(1)$ on a subsequence, the two horizon inequalities in
Lemma~\ref{lem:rangehorizon} have no solution. Within the enumeration
domain $d^2/n\to0$, no horizon with vanishing early TV cost
$d^2h=o(1)$ can make the present unconditional vertex Bernstein
bound, followed by full simplicity division, tend to zero. This is a
method obstruction, not a counterexample to the cascade law.
\end{proposition}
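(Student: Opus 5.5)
The proposition has two parts, and I would prove them separately. The first is the algebraic claim about the two horizon inequalities. The second is a quantitative obstruction for the vertex Bernstein exponent.

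\emph{First part.} I reuse the necessity half of the proof of Lemma~\ref{lem:rangehorizon}. Suppose a horizon $q_0u_0$ satisfies $q_0b_0\to0$ and $q_0^3/J\to\infty$ along the subsequence. Multiplying $(q_0b_0)^3\to0$ by $J/q_0^3\to0$ gives $b_0^3J\to0$. By~\eqref{eq:scaleidentities}, $b_0^3=K/n$, so $b_0^3J=KJ/n$. Since $K\asymp d^2$ and $J\ge d^2$, we get $KJ/n\ge kd^4/n$. If $n/d^4=O(1)$, this is bounded below by a positive constant along the subsequence, which is a contradiction. Both inequalities can therefore never hold simultaneously, and this holds along every further subsequence.

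\emph{Second part.} Take a horizon $h$ with $d^2h=o(1)$. By~\eqref{eq:Bexponents}, the unconditional vertex Bernstein estimate on an interval starting at $h$ has exponent of order at most $nK^2h^3/(1+d^2h)\asymp nd^4h^3$. This is because the variance proxy $V(z)\asymp nd^2h$ is attained: by~\eqref{eq:leadvar} the leading threshold-one variance really is of order $nd^2z$, and the available margin is at most of order $ndKh^2$. I also need to check that the margin cannot be larger. From~\eqref{eq:Da} and the expansion $F(z)=(1-b)z+Kz^2/2+O(d^3z^3)$ with $|b|=O(d^{-1})$, the reference $D_a(u)$ is at most $C\,nd(a/n+|b|h+Kh^2)$. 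Tolerances much larger than $ndKh^2$ would therefore exceed the queue itself, and the Bernstein comparison would be vacuous. Hence the best exponent is $E(h)\le Cnd^4h^3$.

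Since $d^2h=o(1)$, we have $h=o(d^{-2})$, and so $E(h)=o(n/d^2)$. Full simplicity division multiplies the bound by $1/s\ge e^{q/2}$ with $q\asymp d$, and in fact by $e^{c'd^2}$ using~\eqref{eq:simpleprob}, where $\log s=-q/2-q^2/4$ and $q^2/4\sim d^2/4$. For $e^{c'd^2-E(h)}$ to vanish we need $E(h)\gg d^2$, that is $nd^4h^3\gg d^2$, or $h\gg(nd^2)^{-1/3}$. Combined with $h=o(d^{-2})$, this requires $(nd^2)^{-1/3}=o(d^{-2})$, i.e.\ $d^4/n\to0$. On the subsequence with $n/d^4=O(1)$ this fails, so $e^{C_sd^2}e^{-E(h)}$ does not tend to zero.

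The main obstacle is stating precisely what ``the present bound'' means, so that the claim is a statement about the method. I would fix it as the Bernstein inequality~\eqref{eq:Hbern} with the proxy $V(z)$ and any tolerance not exceeding the deterministic margin. I would then note that~\eqref{eq:Hbern} is the only vertex control used before division, so the obstruction concerns the method and not the cascade law. The lower bound $1/s\ge e^{kd^2}$ needs the enumeration domain $d^3/n\to0$ in~\eqref{eq:simpleprob}. Under only $d^2/n\to0$, I would instead use the cruder exact lower bound $\log(1/s)\ge kd^2$ from~\eqref{eq:enuminput}, which still holds since $\Delta^4/L=O(d^3/n)$ is dominated by $d^2$ whenever $d^2/n\to0$ (as $d^3/n\le d^2\cdot d/n$, and $d/n\to0$).
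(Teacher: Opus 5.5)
Your proof is correct and is essentially the paper's proof. The first part is the necessity half of Lemma~\ref{lem:rangehorizon} (via $b_0^3J=KJ/n\ge kd^4/n$). The second part reads $I_V(h)\asymp nK^2h^3/(1+d^2h)=(n/d^2)t^3/(1+t)$, with $t=d^2h\to0$, off \eqref{eq:Bexponents}, so $I_V/d^2\asymp(n/d^4)t^3\to0$ while $-\log s=d^2/4+O(d)$.

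Your extra ``margin cannot be larger'' aside is not needed, since the paper simply takes the route's exponent at the quadratic tolerance. As written, it also drops the $a/n$ and $|b|h$ parts of your own margin bound. If you keep it, use that any Bernstein exponent is at most $3e/(2\Delta)$. Those parts then contribute only $O(a+n|b|h)=O(a_0+dt)=o(d^2)$ for the X comparison sizes, which are the only ones in play because $\Lambda\le C_1n/d^5\to0$ makes B empty there.
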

\begin{proof}
Put $t=d^2h\to0$. The exponent derived in~\eqref{eq:Bexponents}
has scale
$I_V(h)\asymp(n/d^2)t^3/(1+t)$. Hence
$I_V(h)/d^2\asymp(n/d^4)t^3/(1+t)\to0$.
For the original near-regular sequence,
$\nu=d-1+\operatorname{Var}(D)/d=d+O(1)$, so Input E gives
$-\log s=d^2/4+O(d+d^3/n)=d^2/4+O(d)$ within its domain.
The displayed exponential upper-bound route therefore cannot beat
the simplicity cost, even before the grid prefactor is inserted.
Equivalently the two horizon inequalities in Lemma~\ref{lem:rangehorizon}
have no solution. This does not exclude conditional concentration or
another proof at $d\asymp n^{1/4}$, and claims no optimality of $1/4$.
\end{proof}

The successful horizon does not imply a Brownian approximation through
$q_0$. For polynomial degrees with $J\asymp d^2$, the cubic response
error normalized by $G$ at that time has order
$du_0q_0^3\asymp(n/d^2)^{1/6}$, which diverges. Its error relative to
the late quadratic margin is instead $O(dh_0)=o(1)$.
The proofs use a compact Brownian limit followed by exact-response
positive margins, precisely to avoid that invalid inference.

\section{Theorem, assumption, and lemma matrix}\label{sec:ledger}
Table~\ref{tab:dependencies} separates the main qualitative regimes from
the narrower quantitative and relative assertions. All graph rows retain
the prescribed simple ensemble, fixed independent uniform seeds and iid
thresholds of~\eqref{eq:envelope}. E is the only external graph-enumeration
input; all other dependencies listed in the table are proved here. The main proof
order is X, quantitative comparison tools, B/C, and then S; the S-specific
terminal rate is collected with its proof in Section~\ref{sec:S}.

\begingroup
\small
\setlength{\tabcolsep}{4pt}
\begin{longtable}{@{}>{\raggedright\arraybackslash}p{.17\linewidth}>{\raggedright\arraybackslash}p{.32\linewidth}>{\raggedright\arraybackslash}p{.45\linewidth}@{}}
\caption{Results, assumptions, and proof dependencies.}
\label{tab:dependencies}\\
\toprule
Result & Additional assumptions & Proved dependencies / scope \\
\midrule
\endfirsthead
\toprule Result & Additional assumptions & Proved dependencies / scope \\
\midrule\endhead
\bottomrule\endfoot
Barrier D & Envelope only; actual $b$, with no positive lower bound &
Finite derivatives~\eqref{eq:curvfinite}, quotient identity~\eqref{eq:g},
Taylor remainders. Exact zero center if $b\le0$. \\
Graphical tuning & Integer mean, even $n$, $C^2>c+1$, interior variance &
Lemma~\ref{lem:graphical}, Proposition~\ref{prop:realize}; nearest
variance lattice, not guaranteed exact arithmetic zero. \\
Adaptive transfer & $m=o(n/d)$, $d^3/n=o(1)$ &
E, residual subtraction~\eqref{eq:qdiff}, residual-edge switching;
Lemma~\ref{lem:TV}. Applies to capped physical transcripts. \\
Horizon / terminal passage & $d^4/n\to0$; X or B comparison sizes &
Lemmas~\ref{lem:rangehorizon}, \ref{lem:rangefinal}, nested estimates,
interval concentration, exact fixed-seed sampling, E. \\
B & $b>0$, $\Lambda\to\infty$ &
D; Lemma~\ref{lem:Gloc} with scalar comparison
and budget~\eqref{eq:qGfinite}; summable bridge~\eqref{eq:Bsum}; common terminal
passage; monotone seed tails. Implies its degree bound. \\
X & $d^4/n\to0$, finite $\lambda$ and positive finite $a/a_0$ &
Lemmas~\ref{lem:CLT}, \ref{lem:compact}; latent first-hit covariance
and hazard; stopped-event sandwich; dyadic physical bridge;
common terminal passage. \\
C & $d^4/n\to0$, $\lambda\ge-L_0$ &
B, X, joint continuity of $p$, Brownian consistency
\eqref{eq:BfunctionUniform}, monotone tails and subsequences.
Absolute error. \\
Exact-center counterexample & Logarithmic degrees, tuned
$b\sim\beta/d^2>0$, $C^2>c+1$ &
Graphical tuning, D cubic correction, B, uniform Brownian consistency.
Actual error $1/2$ for the naive formula. \\
S terminal sizes / rate & $b\sim\beta/d^2$, $n/d^{10}\to\infty$ &
Section~\ref{sec:S}; separate local exponential and terminal budgets;
\eqref{eq:qSrate}. \\
Bounded-layer rate & $d\asymp\log^2n$, bounded $\lambda$ and compact
positive seed coordinate &
Finite smoothing~\eqref{eq:qCompactBound}, dyadic tails, restricted late
budget~\eqref{eq:qXlate}; rate~\eqref{eq:qXrate}. \\
Brownian boundary & $x\downarrow0$, $\lambda$ in a fixed compact &
Reflection and deterministic Gaussian shift; killed radial identity;
relative horizon bound~\eqref{eq:seedrelative}. \\
Mesoscopic relative law & $d\asymp\log^2n$, bounded $\lambda$,
$a_0/\log d\ll a\ll a_0$ &
Boundary coefficient; explicit shrinking-coordinate check of
\eqref{eq:qCompactBound}; estimate~\eqref{eq:seedabsolute}. \\
Fixed-seed entrance & Same logarithmic / bounded-$\lambda$ regime,
fixed $a\ge1$, $H=\lceil d^2\rceil$ &
Distinct root/offspring laws, truncated optional stopping, fresh-vertex
coupling, E-based transcript transfer. Not a terminal law. \\
Proof-route barrier & $n/d^4=O(1)$ &
Unconditional vertex exponent versus genuine simplicity cost;
Proposition~\ref{prop:routebarrier}. Not optimality or a graph counterexample. \\
\end{longtable}
\endgroup

\appendix

\section{Brownian boundary and mesoscopic seeds}\label{app:seed}
All graph assertions in this appendix assume $d\asymp\log^2n$ with
fixed comparison constants and $|\lambda|\le L$. Brownian assertions
have no graph-degree assumptions.

\paragraph{Relation to classical parabolic maxima.}
For $x>0$, Brownian symmetry rewrites the survival function as
\[
p(x,\lambda)=\Pp\!\left(\sup_{r\ge0}
\{\mathcal B(r)+\lambda r-r^2/2\}<x\right).
\]
For $\lambda=0$ this is $F_{1/2}(x)$ in
\citet[Section~2, Lemma~2.1]{groeneboom2010}.
For general $\lambda\in\mathbb R$, let $Q_{s,y}$ denote the law in
\citet[Theorem~1.1]{glt2013} of Brownian motion minus $t^2$, started
at position $y<0$ at time $s\in\mathbb R$, and let $\tau_0$ be its
first hitting time of zero. Set $s=-2^{-2/3}\lambda$ and
$y=-2^{-1/3}x$. Brownian scaling $r=2^{2/3}v$ transforms
$-x+\mathcal B(r)+\lambda r-r^2/2$, divided by $2^{1/3}$, into
$y+\widetilde{\mathcal B}(v)-2sv-v^2$. Consequently
\[
p(x,\lambda)=Q_{s,y}(\tau_0=\infty).
\]
If $k(s,y)=\partial_yQ_{s,y}(\tau_0<\infty)$, their
Theorem~3.1 gives the boundary limit
$k(s,0)=\lim_{y\uparrow0}k(s,y)$, and hence
\[
\kappa(\lambda)=2^{-1/3}k(-2^{-2/3}\lambda,0).
\]
This is the derivative of a one-sided maximum distribution, not the
Chernoff distribution of the location of a two-sided maximum.
The proof below establishes the boundary law and its radial
representation directly; no Airy identity is used as a proof input.

\subsection{A self-contained boundary coefficient}
Let $p_t(x,\lambda)$ be survival of
$x+\mathcal B(s)-\lambda s+s^2/2$ through $[0,t]$, and put
\[
J_t(\lambda)=\int_0^t(s-\lambda)^2\,\dd s
=t^3/3-\lambda t^2+\lambda^2t.
\]
Let $W$ consist of three independent standard Brownian motions, and
$R_s=|W_s|$. Set
\begin{equation}\label{eq:seedkappat}
\kappa_t(\lambda)=e^{-J_t(\lambda)/2}
\E\left[\frac{\exp\{(t-\lambda)R_t-\int_0^tR_s\,\dd s\}}{R_t}\right].
\end{equation}

\begin{theorem}[Boundary coefficient]\label{thm:boundary}
The limit $\kappa(\lambda)=\lim_{t\to\infty}\kappa_t(\lambda)$ is
finite, strictly positive and continuous. Uniformly for $\lambda$ in
each fixed compact interval,
$p(x,\lambda)=\kappa(\lambda)x+o(x)$ as $x\downarrow0$.
For $t>\max(\lambda,0)$,
\begin{equation}\label{eq:seedderivativetail}
0\le\kappa_t(\lambda)-\kappa(\lambda)
\le e^{-J_t/2}\frac2t
\int_0^\infty y\phi_t(y)e^{-(t-\lambda)y}\,\dd y,
\end{equation}
where $\phi_t(y)=(2\pi t)^{-1/2}e^{-y^2/(2t)}$.
For any fixed $t>0$ a nonzero-height formula is
\begin{equation}\label{eq:seedkappa}
\kappa(\lambda)=e^{-J_t/2}
\E\left[\frac{e^{(t-\lambda)R_t-\int_0^tR_s\,\dd s}}{R_t}
p(R_t,\lambda-t)\right].
\end{equation}
\end{theorem}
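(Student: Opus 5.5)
Everything starts from a Girsanov change of measure on $[0,t]$ followed by a reduction to three-dimensional Bessel processes. Write $Z(s)=x+\mathcal B(s)-\lambda s+s^2/2$. Under the measure with density
\[
\exp\Big\{-\int_0^t(s-\lambda)\,\dd\mathcal B(s)-\tfrac12J_t\Big\},
\]
$x+\mathcal B(s)$ has the law of $Z$ on $[0,t]$. Integrating by parts,
\[
\int_0^t(s-\lambda)\,\dd\mathcal B=(t-\lambda)\mathcal B_t-\int_0^t\mathcal B_s\,\dd s .
\]
With $X_s=x+\mathcal B_s$ this becomes
\[
(t-\lambda)(X_t-x)-\int_0^t X_s\,\dd s+xt .
\]
Hence
\[
p_t(x,\lambda)=e^{-J_t/2}\E_x\Big[e^{(t-\lambda)(X_t-x)-\int_0^tX_s\,\dd s+xt}\ind_{\{\min_{[0,t]}X>0\}}\Big].
\]
On the survival event I use the $h$-transform with $h(y)=y$: Brownian motion killed at zero, reweighted by $X_t/x$, is a $\mathrm{BES}(3)$ process started at $x$. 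This gives
\[
p_t(x,\lambda)=x\,e^{-J_t/2}\E_x^{(3)}\Big[\frac{e^{(t-\lambda)(R_t-x)-\int_0^tR_s\,\dd s+xt}}{R_t}\Big].
\]
As $x\downarrow0$, dominated convergence, or monotone coupling of $\mathrm{BES}(3)$ in its start point, gives $p_t(x,\lambda)/x\to\kappa_t(\lambda)$, uniformly on compact $\lambda$-sets. This is exactly~\eqref{eq:seedkappat}. Integrability follows because $R_t$ has a Gaussian tail, $\E R_t^{-1}<\infty$, and the $-\int R$ term only helps.

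The formula~\eqref{eq:seedkappa} comes from the Markov property at time $t$. Survival through $[0,\infty)$ means surviving through $[0,t]$ and then surviving from the current height. After time $t$ the drift derivative is $s-\lambda$ with $s\ge t$, so the shifted process is the original one with parameter $\lambda-t$. Thus
\[
p(x,\lambda)=\E\big[\ind_{\{\text{survive }[0,t]\}}\,p(Z_t,\lambda-t)\big].
\]
Applying the same Girsanov/$h$-transform representation with the bounded factor $p(R_t,\lambda-t)\le1$ inserted, dividing by $x$, and letting $x\downarrow0$ gives $p(x,\lambda)/x\to$ the right side of~\eqref{eq:seedkappa}. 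This identifies $\kappa(\lambda)$ and shows the limit exists for every fixed $t$. Uniformity in compact $\lambda$ comes from the uniform convergence at fixed $t$ together with continuity of $p$, proved earlier in Section~\ref{sec:bridge}.

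Comparing~\eqref{eq:seedkappat} with~\eqref{eq:seedkappa} shows
\[
\kappa_t-\kappa=e^{-J_t/2}\E\big[\cdots R_t^{-1}\,(1-p(R_t,\lambda-t))\big]\ge0 .
\]
For the tail bound~\eqref{eq:seedderivativetail}, take $t>\max(\lambda,0)$, so $\lambda-t<0$. Then $1-p(y,\lambda-t)$ is at most the probability that Brownian motion with drift at least $t-\lambda>0$ ever hits $-y$, which is $\le e^{-2(t-\lambda)y}$. Combined with the $R_t^{-1}$ factor this should give something like $y^{-1}$ times a decaying exponential. I would then drop $e^{-\int R}\le1$ and use the $\mathrm{BES}(3)$ density $\frac{2y^2}{t}\phi_t(y)\cdot\frac1{\dots}$, i.e. $(y^2/t)\cdot2\phi_t(y)$. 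With the net exponential $e^{(t-\lambda)y-2(t-\lambda)y}$, this yields
\[
e^{-J_t/2}\frac2t\int_0^\infty y\,\phi_t(y)\,e^{-(t-\lambda)y}\,\dd y .
\]
The bound vanishes as $t\to\infty$ because $e^{-J_t/2}$ decays like $e^{-t^3/6}$ against at most Gaussian growth. So $\kappa=\lim\kappa_t$, finite. Continuity of $\kappa$ follows from continuity of each $\kappa_t$ (dominated convergence) plus the uniform tail. Positivity follows from~\eqref{eq:seedkappa} with $t$ large, since $p(y,\lambda-t)>0$ for $y>0$.

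The main obstacle is justifying the $x\downarrow0$ limit uniformly while keeping integrability. The factor $e^{(t-\lambda)R_t}$ must be dominated uniformly for $x\in(0,1]$. I would handle this by coupling $\mathrm{BES}(3)$ from $x$ with $x+\mathrm{BES}(3)$ from $0$, or by the explicit transition density. The other delicate point is the $o(x)$ uniformity. Here I would control the $[t,\infty)$ part with the same drift-tail estimate, so no separate continuity input is needed.
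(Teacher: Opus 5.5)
Your proposal is correct and uses the same ingredients as the paper's proof: the deterministic Girsanov density with the integration-by-parts identity, and the $h$-transform to the three-dimensional radial process (the paper verifies this through the kernel identity $k_t^{(3)}(x,y)=(y/x)k_t(x,y)$). It also relies, as the paper does, on the Markov property at time $t$ with parameter $\lambda-t$ and the linear-drift bound $e^{-2(t-\lambda)y}$. The differences are the order of limits: you let $x\downarrow0$ inside the time-$t$ Markov identity to get \eqref{eq:seedkappa} first and then read off \eqref{eq:seedderivativetail}, whereas the paper first proves the relative horizon bound \eqref{eq:seedrelative} at positive height and differentiates at $x=0$. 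There is also a harmless sign slip in your opening sentence: the density should be $\exp\{+\int_0^t(s-\lambda)\,\dd\mathcal B-J_t/2\}$, which is the one your displayed formula actually uses.
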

\begin{proof}
For $Y_s=x+\mathcal B(s)$ killed at zero, reflection
(Lemma~\ref{lem:Brownianmin}) gives its transition density
\begin{equation}\label{eq:seedkernel}
k_t(x,y)=\phi_t(y-x)-\phi_t(y+x),\qquad x,y>0.
\end{equation}
For $R_s^x=|(x,0,0)+W_s|$, integrating the three-dimensional Gaussian
density over a radius-$y$ sphere gives
$k_t^{(3)}(x,y)=(y/x)k_t(x,y)$. Indeed the angular factor is
$2\pi\int_{-1}^1e^{xyv/t}\,\dd v
=4\pi t\sinh(xy/t)/(xy)$, which, multiplied by
$y^2(2\pi t)^{-3/2}e^{-(x^2+y^2)/(2t)}$, gives that formula.
Rotational invariance gives the radius its Markov transition law.
Multiplying the density identity over a finite time grid telescopes
the factors $y/x$, proving for nonnegative cylinder functionals $F$
\begin{equation}\label{eq:seedradial}
\E_x[\ind_{\{\tau_0>t\}}F(Y)]
=x\E[F(R^x)/R_t^x].
\end{equation}
A monotone-class argument extends it to nonnegative path functionals.
Continuous paths are determined by rational times; the killed densities
encode survival. This is a direct Gaussian calculation, not an imported
meander or diffusion entrance theorem.

The deterministic Gaussian change of measure proved in
Lemma~\ref{lem:Brownianmin}, with $f(s)=s-\lambda$, has density
$\exp\{\int_0^t f\,\dd\mathcal B-J_t/2\}$. Integration by parts is exact:
\[
\int_0^t(s-\lambda)\,\dd\mathcal B(s)
=(t-\lambda)Y_t-\int_0^tY_s\,\dd s+\lambda x.
\]
Consequently
\begin{equation}\label{eq:seedfiniteheight}
\frac{p_t(x,\lambda)}x=e^{\lambda x-J_t/2}
\E\left[\frac{e^{(t-\lambda)R_t^x-\int_0^tR_s^x\,\dd s}}{R_t^x}\right].
\end{equation}
Under a common construction $\sup_{s\le t}|R_s^x-R_s|\le x$.
For every $q<3$,
$\sup_{0\le x\le1}\E(R_t^x)^{-q}<\infty$: inside the unit ball
bound the shifted Gaussian density by $(2\pi t)^{-3/2}$ and integrate
$r^{2-q}$; outside, the inverse power is bounded. Positive exponential
moments of $R_t^x$ are uniformly finite since $R_t^x\le1+|W_t|$.
H\"older, with an inverse-power exponent below three and an exponent
slightly above one for the whole integrand, proves uniform integrability.
The negative area term can be dropped for this bound. Passage to $x=0$
in~\eqref{eq:seedfiniteheight} proves~\eqref{eq:seedkappat}, uniformly
on compact $\lambda$ intervals. The same domination proves continuity
of $\kappa_t$.

Condition at time $t>\lambda$. At height $y>0$ the future drift is
$(t-\lambda)v+v^2/2$, so the future hit probability is at most
$e^{-2(t-\lambda)y}$ by the positive linear-drift martingale in
Lemma~\ref{lem:Brownianother}. Retaining the change-of-measure endpoint
factor, and dropping the nonpositive area exponent, gives
\begin{equation}\label{eq:seedrelativet0}
0\le p_t(x,\lambda)-p(x,\lambda)
\le e^{\lambda x-J_t/2}
\int_0^\infty k_t(x,y)e^{-(t-\lambda)y}\,\dd y.
\end{equation}
For $0<x\le1$,
\[
\frac{k_t(x,y)}x
=\phi_t(y-x)\frac{1-e^{-2xy/t}}x
\le\frac{2y}{t}\phi_t(y)e^{y/t}.
\]
If $|\lambda|\le L$ and $t\ge\max\{6(L+1),2\}$, then
$t-\lambda-1/t\ge t/2$ and $J_t\ge t^3/6$. Therefore
\begin{equation}\label{eq:seedrelative}
\sup_{\substack{|\lambda|\le L\\0<x\le1}}
\frac{p_t(x,\lambda)-p(x,\lambda)}x
\le C_Lt^{-7/2}e^{-t^3/12}.
\end{equation}
The power is $t^{-3/2}\int_0^\infty ye^{-ty/2}\,\dd y=O(t^{-7/2})$.
The factor $x$ is essential: this is a relative horizon estimate.

Since $p_t$ decreases with $t$, $\kappa_t$ decreases to a nonnegative
finite limit. First taking a large fixed $t$ and then small $x$ in
\eqref{eq:seedfiniteheight} and~\eqref{eq:seedrelative} proves the
compact-uniform boundary expansion and uniform convergence of
$\kappa_t$. The derivative of~\eqref{eq:seedkernel} at $x=0$ is
$2y\phi_t(y)/t$; passage to that derivative in
\eqref{eq:seedrelativet0} proves~\eqref{eq:seedderivativetail}.

The deterministic-time Markov property replaces the integrand in
\eqref{eq:seedfiniteheight} by the same integrand times
$p(R_t^x,\lambda-t)$ for infinite-horizon survival. The function $p$
is jointly continuous for positive height by the finite minimum no-atom
law and the uniform tail~\eqref{eq:qBrownianTail}. The same domination
therefore proves~\eqref{eq:seedkappa}. The positivity $p(y,\nu)>0$ for every $y>0$ and finite $\nu$
was proved in Section~\ref{sec:X} by a Brownian tube followed by
linear-drift survival. Thus the integrand in
\eqref{eq:seedkappa} is strictly positive almost surely and integrable.
This proves $\kappa>0$. Uniform convergence of the continuous
$\kappa_t$ proves continuity and hence
\begin{equation}\label{eq:seedcompactpositive}
0<\inf_{|\lambda|\le L}\kappa(\lambda)
\le\sup_{|\lambda|\le L}\kappa(\lambda)<\infty.
\end{equation}
\end{proof}

\subsection{Distinct roots and the discrete entrance law}
Let $D^*$ have the size-biased empirical degree law
$\Pp(D^*=k)=k\Pp(D=k)/d$. A uniform seed has vulnerable-child law
\[
\xi^{\rm root}\mid D=k\sim\operatorname{Bin}(k,\pi),\qquad
\E\xi^{\rm root}=1-c/d,
\]
whereas a reached nonroot has
\begin{equation}\label{eq:seedoffspring}
\xi\mid D^*=k\sim\operatorname{Bin}(k-1,\pi),\qquad \E\xi=1-b.
\end{equation}
The arrival edge is used once. Repeating the root deficit as the
descendant drift would give an incorrect branching law.
The mixture has bounded exponential moments near zero, since
$\pi\Delta=O(1)$, and $v_2:=\E(\xi-1)^2\to1$.
Indeed its conditional variance is $(1-\pi)\E\xi\to1$, the
variance of its conditional mean is
$\pi^2\Var(D^*-1)=O(d^{-1})$, and the squared mean deficit is $b^2=o(1)$.

\begin{lemma}[Branching entrance]\label{lem:seedgw}
Let $H\to\infty$, $\Delta/H\to0$ and $|b|H\to0$.
For $S_j=z+\sum_{i=1}^j(\xi_i-1)$ with independent offspring
\eqref{eq:seedoffspring}, let $\sigma_H$ be its first hit of zero or
$[H,\infty)$. Uniformly over integers $z$ with $1\le z<H$,
\begin{equation}\label{eq:seedgw}
\E_z\sigma_H\le Cz(H+\Delta),\qquad
\Pp_z(S_{\sigma_H}\ge H)=\frac zH
[1+O(|b|H+\Delta/H)].
\end{equation}
For fixed $a$ roots processed first, if $a\Delta<H$, the hit
probability is $a(1-c/d)/H$ times the same relative error.
\end{lemma}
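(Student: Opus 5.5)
We will treat $S$ as a random walk with step $\xi-1$: mean $-b$, variance $v_2\to1$, and bounded exponential moments near zero. Two martingales will be used, both stopped at $\sigma_H$. The first is the linear one, $S_j+jb$. The second is quadratic, $(S_j+jb)^2-jv_2$, or equivalently $S_j^2$ minus its compensator.

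\textbf{Step 1 (bounding $\E_z\sigma_H$).} Stop the quadratic martingale at $\sigma_H\wedge N$ and apply optional stopping. Before the exit, $S$ stays in $[0,H)$. At the exit, the overshoot above $H$ is at most $\Delta$, since $\xi\le\Delta$, and $S$ cannot jump below zero because steps are at least $-1$. Hence
\[
v_2\,\E(\sigma_H\wedge N)\le \E(S_{\sigma_H\wedge N}+b(\sigma_H\wedge N))^2 .
\]
The drift term contributes at most $2|b|(H+\Delta)\E(\sigma_H\wedge N)+b^2\E(\sigma_H\wedge N)^2$. To avoid the square, we first obtain a crude finiteness bound: within any block of $H$ steps the walk exits with probability bounded below, which follows from the variance. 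This makes the moments finite, and the small parameter $|b|H\to0$ then absorbs the drift terms. A cleaner route is to use $S^2$ with compensator $j(v_2+b^2)-2b\sum_{i<j} S_i$. Since $0\le S_i<H$ before exit, the compensator lies between $j(1-2|b|H-o(1))$ and $2j$. This gives $\E\sigma_H\le C\E S_{\sigma_H}^2\le C(H+\Delta)\E S_{\sigma_H}$. We then use $\E S_{\sigma_H}=z-b\E\sigma_H$, which yields
\[
\E\sigma_H\le Cz(H+\Delta)+C|b|(H+\Delta)\E\sigma_H ,
\]
and the last term is absorbed because $|b|(H+\Delta)\to0$.

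\textbf{Step 2 (hit probability).} Optional stopping for $S_j+jb$ gives
\[
\E S_{\sigma_H}=z-b\E\sigma_H=z\bigl[1+O(|b|(H+\Delta))\bigr].
\]
On the event $\{S_{\sigma_H}\ge H\}$ we have $H\le S_{\sigma_H}\le H+\Delta$, and on the complement $S_{\sigma_H}=0$. Therefore
\[
\E S_{\sigma_H}=H\,\Pp_z(\text{hit})\,[1+O(\Delta/H)],
\]
which gives $\Pp_z=\frac{z}{H}[1+O(|b|H+\Delta/H)]$, uniformly in $z$ because all constants are uniform. The term $|b|\Delta$ is dominated by $|b|H$.

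\textbf{Step 3 (roots).} Process the $a$ roots first; their total child count $Z=\sum\xi^{\rm root}_r$ is a sum of $a$ independent binomials. Conditional on $Z$, the subsequent walk starts at $S_0=Z$ (the root consumptions being accounted for), and by Step 2 it hits with probability $\frac{Z}{H}[1+O(\cdot)]$. The case $Z=0$ contributes zero, and since $Z\le a\Delta<H$ the starting point is admissible. Taking expectations gives the stated probability $\frac{a(1-c/d)}{H}[1+O(|b|H+\Delta/H)]$.

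The main obstacle is Step 1: controlling $\E\sigma_H$ without circularity when the drift $b$ may be either sign. The compensator bound using $0\le S_i<H$ is the device we intend to rely on, together with a truncation at $N$ and monotone convergence. A secondary issue is the uniformity of the overshoot bound, which is handled by $\xi\le\Delta$ deterministically.
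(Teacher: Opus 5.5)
Your proposal is correct and is essentially the paper's argument. The shared steps are: the compensator $v_2-2bS_j$ of $S_j^2$, bounded below using $0\le S_j<H$; truncation at $\sigma_H\wedge N$, absorbing the $|b|(H+\Delta)\E(\sigma_H\wedge N)$ term when $b<0$; optional stopping giving $\E S_{\sigma_H}=z-b\E\sigma_H$; division using exit exactly at $0$ or in $[H,H+\Delta]$; and averaging over the root child count $Z\le a\Delta<H$.

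One slip is harmless. In the preliminary route you discard, exit within a block of $H$ steps does not have probability bounded below, since diffusive exit from the middle needs order $H^2$ steps (the paper's tail bound uses blocks of that length). Your final argument does not use this claim.
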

\begin{proof}
Before exit, the conditional mean increment of $S_j^2$ is
$v_2-2bS_j$, bounded below by a fixed positive constant.
At $t_N=\sigma_H\wedge N$, $0\le S_{t_N}\le H+\Delta$. Thus
\[
k\E t_N\le\E S_{t_N}^2-z^2
\le(H+\Delta)\E S_{t_N},\qquad
\E S_{t_N}=z-b\E t_N.
\]
For $b\ge0$ the mean is at most $z$; for $b<0$ absorb
$|b|(H+\Delta)\E t_N$. Let $N\to\infty$ to get finite mean exit.
Bounded stopped positions and this truncation justify optional stopping,
giving $\E S_{\sigma_H}=z[1+O(|b|H)]$. Extinction is at exactly zero,
since the queue is integer-valued and its downward step is $-1$;
a hit height lies in $[H,H+\Delta]$.
Division gives~\eqref{eq:seedgw}. Given the total root child count $Z$,
descendant offspring are independent of it; $Z\le a\Delta<H$ and
$\E Z=a(1-c/d)$. Averaging proves the root assertion.

The uniform mean bound is at most $CH^2$ from any integer $z$ with
$1\le z<H$.
In blocks of $\lceil2CH^2\rceil$ steps, conditional Markov inequality
and independent future offspring give survival chance at most $1/2$.
Iteration proves
\begin{equation}\label{eq:seedgwtail}
\sup_{\substack{z\in\mathbb Z\\1\le z<H}}
\Pp_z(\sigma_H>N)\le C'e^{-k'N/H^2}.
\end{equation}
\end{proof}

\begin{theorem}[Fixed-seed simple-graph entrance]\label{thm:entrance}
In the logarithmic-degree, bounded-$\lambda$ graph regime, fix
$a\ge1$. Sample the seeds in uniform random order without replacement
and process them first in that order. Process active nonseeds in
first-in, first-out (FIFO) order of discovery, revealing each processed
vertex's half-edges in a fixed order. Put $H=\lceil d^2\rceil$ and
$N=\lceil H^2d\rceil$. Let $E_H$ be the event that after roots are
processed, the nonseed processing queue reaches at least $H$ before
exhaustion, within $N$ nonroot steps. Then
\[
\Pp_{\Simple}(E_H)=\frac aH(1+o(1)),\qquad
\Pp_{\Simple}(T_a\ge n/2,E_H^c)=o(a/a_0).
\]
These are entrance assertions; they do not prove the fixed-$a$ terminal law.
\end{theorem}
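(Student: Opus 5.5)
The proof runs in three stages: a configuration-model computation of $\Pp(E_H)$, a transfer of that capped event to the simple law, and a bound on $\Pp_{\Simple}(T_a\ge n/2,E_H^c)$ by the bounded-layer rate \eqref{eq:seedabsolute} together with the mesoscopic boundary law of Theorem~\ref{thm:boundary}.

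\textbf{Stage 1: the configuration law.} Work first with a uniform pairing. The FIFO exploration processes roots first, then nonseed active vertices one at a time. Processing a vertex reveals its remaining half-edges. In the early phase the number of revealed pairs is at most $a\Delta+N\Delta=O(d^5)$, which is $o(n/d)$ in the logarithmic-degree regime. Within this phase I would couple the exploration to the branching queue of Lemma~\ref{lem:seedgw} by fresh-vertex coupling.

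\begin{itemize}
\item Each revealed partner end is uniform among unused ends. It lies on a previously touched vertex with probability $O(N\Delta^2/(nd))=o(1/H)$, and a union over the $O(N\Delta)$ reveals of the event "hits a used vertex" costs $O(N^2\Delta^3/(nd))$. This is $o(a/H)$ because $n$ is superpolylogarithmic in $d$.
\item On the fresh event, a partner vertex has the size-biased law $D^*$ up to a depletion error $O(N\Delta/n)$ in total variation. Its threshold is an independent Bernoulli$(\pi)$, so it contributes $\operatorname{Bin}(D^*-1,\pi)$ vulnerable children. This matches \eqref{eq:seedoffspring}.
\item Root processing gives total child count $Z$ with $\E Z=a(1-c/d)$. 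Moreover $Z\le a\Delta<H$ eventually, since $\Delta=O(d)$ and $H=\lceil d^2\rceil$.
\end{itemize}

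I would treat threshold-two vertices hit twice in the early phase as a second-order event. Its probability is at most the collision bound above, so it is absorbed there.

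Lemma~\ref{lem:seedgw} then gives hitting probability $a(1-c/d)/H\,[1+O(|b|H+\Delta/H)]$. Here $|b|H=O(b_0d^2)=O(d^{8/3}n^{-1/3})\to0$ and $\Delta/H\to0$. The time cap $N=H^2d$ costs $e^{-kd}=o(1/H)$ by \eqref{eq:seedgwtail}. This yields $\Pp_{\Conf}(E_H)=(a/H)(1+o(1))$.

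\textbf{Stage 2: transfer to the simple law.} $E_H$ is measurable with respect to a transcript capped at $m=O(d^5)$ pairs. Lemma~\ref{lem:TV} applies with $\mathcal V_m=O(md/n+d^3/n)$, but an absolute TV error is too weak when compared with the target $a/H\asymp d^{-2}$. Instead I would use the density statement inside the proof of Lemma~\ref{lem:TV}: the density equals $1+O_E(md/n+d^3/n)$ on every simple history. Multiplying gives a relative error, and the nonsimple histories carry probability at most $\beta_m$. On $E_H$ itself, $\beta_m$ is controlled by the same collision bound, which is $o(1/H)$. This gives $\Pp_{\Simple}(E_H)=(a/H)(1+o(1))$.

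\textbf{Stage 3: the second assertion.} On $E_H^c$, either the queue dies before reaching $H$, or the cap $N$ is exceeded; the latter costs $o(1/H)$ relatively as above. If the queue dies, then $T_a\le a+N\Delta$ pairs' worth of vertices, which is $o(n)$, unless the exploration restarts; it cannot restart, because the queue is exhausted and uniform completion creates no activation. So $\Pp_{\Simple}(T_a\ge n/2,E_H^c)$ is bounded by overflow and nonsimple-history terms, both $O(a/H)\cdot o(1)$. To obtain $o(a/a_0)$, I would compare $H$ with $a_0$. We have $a_0=(n/K)^{1/3}\gg H=d^2$, and the target $o(a/a_0)$ is much smaller than $a/H$. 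This is where the argument must be strengthened: the overflow and collision terms must be shown to be $o(1/a_0)$.

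\begin{itemize}
\item The collision term $N^2\Delta^3/(nd)\asymp d^{12}/n$ is much smaller than $a_0^{-1}\asymp d^{2/3}n^{-1/3}$, since $n$ is superpolylogarithmic. So that term is fine.
\item The time cap gives only $e^{-kd}=e^{-k\log^2 n}$, which beats $n^{-1/3}$. That term is also fine.
\item The remaining danger is the event that the queue stays alive beyond $N$ steps yet never reaches $H$. Its probability is already contained in \eqref{eq:seedgwtail}, so it is covered.
\end{itemize}

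The main obstacle is the relative, rather than absolute, simplicity transfer at probability scale $d^{-2}$ or $a_0^{-1}$. I would handle it through the pointwise density bound on simple histories, bounding the nonsimple-history mass on the event by the collision estimate rather than by the global $\beta_m$.
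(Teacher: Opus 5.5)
Your argument is essentially the paper's proof. Its ingredients are the same: a no-repeat (fresh-vertex) coupling of the capped FIFO exploration to the branching queue with distinct root and nonroot offspring laws; collision and depletion costs of order $d^{12}/n$; Lemma~\ref{lem:seedgw} with the exit-time tail~\eqref{eq:seedgwtail}; and transcript transfer. For the second assertion you use, as the paper does, the observation that exhaustion among $a+N=o(n)$ processed vertices forces $T_a<n/2$. That leaves only the branching exit-time tail plus the coupling and transfer errors.

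The one divergence is the simplicity transfer, and there your stated reason is mistaken. The cap is $m=\Delta(a+N)=O(d^6)$, not $O(d^5)$, since $N\Delta\asymp d^6$. The absolute bound of Lemma~\ref{lem:TV} is therefore $O_E(d^7/n+d^3/n)$. At $d\asymp\log^2 n$ this is polylogarithmic over $n$, hence already $o(H^{-1})$ and even $o(a_0^{-1})$; the paper simply uses this absolute bound. Your density-plus-$\beta_m$ argument is valid but unnecessary here. Relative transfer becomes a genuine obstruction only at diffusion horizons $Su_0$, where the TV cost is $O(Sb_0)$.

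Finally, the route announced in your overview, through~\eqref{eq:seedabsolute} and Theorem~\ref{thm:boundary}, could not give the second assertion. Estimate~\eqref{eq:seedabsolute} requires $a/a_0\ge 8d^{-1/8}$, and its error $C/\log d$ is far larger than $a/a_0$. Your Stage~3 never actually uses it, so the proof stands.
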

\begin{proof}
Cap pair revelations at $m=\Delta(a+N)=O(d^6)$. In the configuration
exploration abort comparison if a partner belongs to any already seen
vertex, including roots and inactive vertices. Before reveal $j$ there
are at most $a+j$ such vertices and at most $(a+j)\Delta$ ends there.
The partner pool has size at least $nd-2j-1$, so the repeat chance is
$C(a+j)/n$. A fresh partner's degree distribution is proportional to
$k(n_k-\mathrm{seen}_k)$; its TV distance from the original size-biased
law is also at most $C(a+j)/n$. Couple successively to independent
draws of that law. Fresh threshold labels are independent Bernoulli
variables and independent of their degrees. Root identities, sampled
without replacement, couple to iid uniform identities at cost $O(a^2/n)$.
The combined error is
\begin{equation}\label{eq:seedtreecoupling}
\delta_{\rm tree}\le C(a+m)^2/n=O(d^{12}/n).
\end{equation}
On a successful coupling every partner is fresh, so threshold-two
vertices receive only one mark and do not activate. A vulnerable nonroot
uses its parent edge, leaving exactly the offspring law
\eqref{eq:seedoffspring}; the independent child degree draws are
independent of its parent's child count. In the coupled branching
construction, a child's degree can be left unread until it is processed:
its vulnerable label and its position in the FIFO queue are independent
of that degree. Thus successive nonroot offspring have the independent
mixture law~\eqref{eq:seedoffspring}. The root law remains distinct,
and the physical entrance agrees with the branching queue event.

The actual exploration is an adaptive capped transcript, so
Lemma~\ref{lem:TV} adds error
$O_E(md/n+d^3/n)=O_E(d^7/n+d^3/n)$. At logarithmic degrees these
errors are $o(H^{-1})$ and $o(a_0^{-1})$, with all residual E domains
satisfied. Here $|b|H\le LH/a_0\to0$, $\Delta/H=O(d^{-1})$ and
$a\Delta<H$. Lemma~\ref{lem:seedgw} gives the untruncated hit probability
$a/H(1+o(1))$, and~\eqref{eq:seedgwtail} makes truncation cost
$Ce^{-kd}$. This proves the entrance law.

If $n/2$ vertices activate without $E_H$, the processing queue cannot
have exhausted among its first $a+N=o(n)$ processed vertices. It
therefore survives $N$ nonroot steps without hitting $H$. The successful
coupling makes this a branching exit-time-tail event, giving
\[
\Pp_{\Simple}(T_a\ge n/2,E_H^c)
\le Ce^{-kd}+O(d^{12}/n)+O_E(d^7/n+d^3/n)=o(a_0^{-1}).
\]
Indeed $\log a_0\sim\log n/3$, $d\asymp\log^2n$, and every
polynomial in $d$ divided by $n$ is negligible compared with $a_0^{-1}$.
\end{proof}

\subsection{Shrinking-coordinate comparison and the mesoscopic law}
The shrinking-coordinate estimate~\eqref{eq:seedabsolute} was proved
above directly from~\eqref{eq:qCompactBound}, including its strict
height condition, all four error budgets, and the physical late passage.
Thus no substitution of vanishing $x$ into a fixed-positive-$x$ limit is needed.

\begin{proof}[Proof of Theorem~\ref{thm:meso}]
Its assumptions give $x=a/a_0\downarrow0$ and $x\log d\to\infty$,
hence $x\gg d^{-1/8}$. Divide~\eqref{eq:seedabsolute} by $x$ to make
the error vanish. The compact-uniform expansion in
Theorem~\ref{thm:boundary} gives
$p(x,\lambda)=\kappa(\lambda)x+o(x)$, and
\eqref{eq:seedcompactpositive} permits division by that proposed probability.
Continuity permits replacement by the limiting coefficient if
$\lambda\to\ell$. For example
$a=\lfloor a_0/(\log d)^{1/2}\rfloor$ is a nonempty mesoscopic scale
on the graphical arrays constructed above. No inference about $a=1$
is made.
\end{proof}

\subsection{What is missing for fixed seeds}
For fixed $a\ge1$, the conjectured terminal law is
$\Pp(T_a\ge n/2)\sim\kappa(\lambda)a/a_0$. The entrance theorem
reduces it to an entrance-conditioned continuation estimate: one would
need, with the actual entrance history distribution,
\begin{equation}\label{eq:seedmissing}
\Pp_{\Simple}(T_a\ge n/2\mid E_H)
=\kappa(\lambda)\frac{H}{a_0}(1+o(1)),\qquad H=\lceil d^2\rceil.
\end{equation}
Entrance histories are not fresh uniform seed sets. The error
$1/\log d$ is larger than $H/a_0$, and $H/a_0$ does not satisfy the
shrinking-coordinate condition in~\eqref{eq:seedabsolute}.
At a diffusion horizon $Su_0$, the raw early TV bound is $O(Sb_0)$,
whose ratio to a fixed-seed target $ab_0$ is $O(S/a)$; it cannot alone
supply relative simplicity transfer. Although the density proof is
multiplicative on every simple transcript, it also requires
\[
\Pp_{\Conf}(T_a\ge n/2,\text{nonsimple exposed history})
=o(\Pp_{\Conf}(T_a\ge n/2))
\]
and relative cap/physical-tail control to prove the conjecture. These
survival-relative statements have not been proved here.

There is a precise obstruction to extending the crude no-repeat tree
coupling until the mesoscopic comparison becomes usable. If
$N=H^2\ell$ steps are used, its error is $O(d^2H^4\ell^2/n)$.
Relative entrance accuracy on the $a/H$ scale requires
$d^2H^5\ell^2/n\to0$, whereas~\eqref{eq:seedabsolute} needs
$H\gg a_0/\log d$. At logarithmic degrees these demand respectively
$H\ll n^{1/5}$ and $H\gg n^{1/3}$ up to logarithmic factors.
They have no overlap for this argument; this does not disprove the
fixed-seed conjecture or rule out refined methods with repeated vertices.

The late envelope~\eqref{eq:qXlate} is already $o(a_0^{-1})$ in the
logarithmic regime. Thus late macroscopic closure is not the missing
relative estimate. Also, for bounded positive $\lambda$,
\eqref{eq:BboundedCorrection} is $O(1/(a_0d))$, negligible relative
to $a/a_0$ for fixed $a$; exact centering is not an additional obstruction
there. The obstruction is small-state continuation and relative early transfer.

An elementary logical counterexample makes the limitation of C explicit.
Define increasing probability arrays $Q_n(a)=p(a/a_0,\lambda)$ except
$Q_n(1)=\tfrac12p(1/a_0,\lambda)$, and $Q_n(0)=0$.
Then $\sup_a|Q_n(a)-p(a/a_0,\lambda)|\to0$, but
$Q_n(1)/(\kappa(\lambda)/a_0)\to1/2$.
This is a counterexample to extrapolating an absolute theorem, not a
graph-model counterexample. The final fixed-$a$ law is not established here.

\section{Probability inequalities used in the article}\label{app:prob}
This appendix gives the precise elementary inequalities used above and their
proofs. Conditional expectation, integration, independence, and Gaussian
random variables have their usual measure-theoretic meanings.

\begin{lemma}[Doob's second-moment bound]\label{lem:Doob}
For a square-integrable right-continuous martingale $N$ on $[0,R]$,
$\E\sup_{t\le R}|N(t)|^2\le4\E|N(R)|^2$.
\end{lemma}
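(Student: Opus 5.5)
The plan is the classical Doob maximal argument: first prove the weak-type inequality on a finite grid, then pass to the continuum using right-continuity. Note that $|N|$ is a nonnegative submartingale by conditional Jensen, since $N$ is square integrable.

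\emph{Finite grid.} Fix a finite grid $0=t_0<t_1<\dots<t_k=R$ and put $N^*_k=\max_j|N(t_j)|$. For $\lambda>0$, let $\sigma$ be the first grid index with $|N(t_j)|\ge\lambda$. On $\{\sigma=j\}$, which is $\mathcal F_{t_j}$-measurable, the submartingale property gives $\E[|N(R)|\ind_{\{\sigma=j\}}]\ge\E[|N(t_j)|\ind_{\{\sigma=j\}}]\ge\lambda\Pp(\sigma=j)$. Summing over $j$ yields the stopping inequality
\[
\lambda\Pp(N^*_k\ge\lambda)\le\E[|N(R)|\ind_{\{N^*_k\ge\lambda\}}].
\]
This is the same inequality quoted later in the proof of Lemma~\ref{lem:qFourth}.

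\emph{Second moment on the grid.} Multiply the stopping inequality by $2$ and integrate in $\lambda$ over $(0,\infty)$. Tonelli gives
\[
\E(N^*_k)^2\le2\E[|N(R)|N^*_k].
\]
Cauchy--Schwarz then gives $\E(N^*_k)^2\le2(\E|N(R)|^2)^{1/2}(\E(N^*_k)^2)^{1/2}$. Since $N^*_k\le\sum_j|N(t_j)|$ is square integrable, we may divide and conclude $\E(N^*_k)^2\le4\E|N(R)|^2$.

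\emph{Continuum.} Take nested grids consisting of the dyadic rationals in $[0,R]$ together with $R$. By right-continuity the grid maxima increase to $\sup_{t\le R}|N(t)|$. Monotone convergence then transfers the bound to the supremum.

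The only delicate point is the finite-moment division, which is handled by working on finite grids before taking limits. No step is a real obstacle.
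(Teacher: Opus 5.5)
Your proposal is correct and follows the same route as the paper. Both prove the weak-type stopping inequality on a finite grid, integrate it against $2\,d\lambda$ using Tonelli, and apply Cauchy--Schwarz, then pass to the continuum with nested dense grids, right-continuity and monotone convergence; you also make explicit that the grid maximum is square integrable, which justifies the division the paper leaves implicit.
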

\begin{proof}
First take a finite time grid and let $N^*$ be its maximum absolute value.
Stop at the first grid point with $|N|\ge a$. Conditional Jensen and the
martingale property give
$a\Pp(N^*\ge a)\le\E[|N(R)|\ind_{\{N^*\ge a\}}]$.
Integrate against $2\dd a$ and use Tonelli to get
$\E(N^*)^2\le2\E[|N(R)|N^*]$.
Cauchy--Schwarz yields the bound. Increasing finite grids containing a dense
set of times converge to the path supremum by right continuity; monotone
convergence finishes the proof. The same argument applies to a martingale
stopped at a bounded stopping time, by approximation with grid stopping times.
\end{proof}

\begin{lemma}[Bernstein and compensated jump martingales]\label{lem:exp}
If independent centered variables $\xi_i$ satisfy $|\xi_i|\le J$ and
$\sum_i\E\xi_i^2\le V$, then
\[
\Pp(|\sum_i\xi_i|>e)\le2\exp[-e^2/(2(V+Je/3))].
\]
If a compensated jump martingale $N$ has jumps of absolute size at most $J$
and predictable bracket at most a deterministic $V$ through time $R$, then
\[
\Pp(\sup_{t\le R}|N(t)-N(0)|>e)
\le2\exp[-e^2/(2(V+Je/3))].
\]
The second statement also holds conditionally on the information at the
start of a deterministic interval, with that interval's bracket increase.
\end{lemma}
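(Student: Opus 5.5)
For the first inequality I would use the classical exponential-moment (Chernoff) route. For $\theta>0$, Markov's inequality gives $\Pp(\sum_i\xi_i>e)\le e^{-\theta e}\prod_i\E e^{\theta\xi_i}$. Each factor is handled through the power series: since $\E\xi_i=0$ and $|\xi_i|^k\le J^{k-2}\xi_i^2$,
\[
\E e^{\theta\xi_i}\le1+\E\xi_i^2\sum_{k\ge2}\frac{\theta^kJ^{k-2}}{k!}.
\]
Using $k!\ge2\cdot3^{k-2}$, this is at most $1+\tfrac{\theta^2\E\xi_i^2}{2(1-\theta J/3)}$ whenever $\theta J<3$. Applying $1+x\le e^x$ and summing the variances, the right side is bounded by $\exp[\theta^2V/(2(1-\theta J/3))-\theta e]$. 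The standard choice $\theta=e/(V+Je/3)$ satisfies $\theta J<3$, and substituting it collapses the exponent to exactly $-e^2/(2(V+Je/3))$. Applying the same bound to $-\xi_i$ and taking a union bound produces the factor $2$.

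For the martingale statement I would build the analogous exponential supermartingale. For a compensated pure-jump martingale $N$ with jumps $|\Delta N|\le J$ and predictable compensator $\nu$ of its jump measure, set $\psi(\theta,x)=e^{\theta x}-1-\theta x$. The process
\[
\mathcal Z_\theta(t)=\exp\Bigl\{\theta(N(t)-N(0))-\int_0^t\!\int\psi(\theta,x)\,\nu(\dd s,\dd x)\Bigr\}
\]
is a positive local martingale, hence a supermartingale with $\E\mathcal Z_\theta\le1$. The same series estimate as before gives $\psi(\theta,x)\le x^2\theta^2/(2(1-\theta J/3))$ for $|x|\le J$. Consequently the compensating integral is at most $\theta^2\langle N\rangle_t/(2(1-\theta J/3))$, which is in turn at most $\theta^2V/(2(1-\theta J/3))$ since the bracket is dominated by the deterministic $V$.

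To control the supremum rather than a single time, I would stop at $\sigma=\inf\{t:N(t)-N(0)>e\}\wedge R$. Right continuity gives $N(\sigma)-N(0)\ge e$ on the event that $\sigma$ occurs before or at $R$ with an exceedance. Optional stopping for the bounded stopping time $\sigma$ then yields $\Pp(\sup_{t\le R}(N-N(0))>e)\le\exp[-\theta e+\theta^2V/(2(1-\theta J/3))]$. Choosing the same $\theta$ as in the first part gives the one-sided bound, and treating $-N$ symmetrically gives the factor $2$. For the conditional version, I would run the identical argument with conditional expectations given $\mathcal F_{t_0}$ at the start $t_0$ of a deterministic interval, using the bracket increase over that interval as $V$. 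The supermartingale property holds conditionally, so nothing else changes.

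The main obstacle is the local-martingale-to-supermartingale step, together with justifying the compensator inequality pathwise when the bracket bound is only assumed through time $R$. I would handle it by localizing with $\sigma_n=\inf\{t:|N(t)|\ge n\}$. Up to $\sigma_n\wedge R$ the exponential is bounded, so $\mathcal Z_\theta$ is a true martingale there. Fatou's lemma and $n\to\infty$ then give the supermartingale bound. Because the model has finitely many jumps on bounded intervals and its jump rates are bounded by $M$, no further integrability issue arises.
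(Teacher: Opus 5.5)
Your proposal is correct and follows essentially the same route as the paper. In both, the series bound $e^{\theta x}-1-\theta x\le\theta^2x^2/(2(1-\theta J/3))$ comes from $k!\ge2\cdot3^{k-2}$ and is combined with the choice $\theta=e/(V+Je/3)$. The martingale part is then the same exponential supermartingale argument, with stopping at the first crossing of $e$ or at $R$, the reflection $N\mapsto-N$, and a conditional rerun for the interval version. Your abstract jump-measure compensator $\nu$ is just the general form of the paper's typed representation $\sum_j\int g_j(\dd N_j-\lambda_j\dd t)$. The boundary case $V=0$ (where $\theta J=3$) is trivial.
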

\begin{proof}
For $|x|\le J$ and $0<\theta<3/J$, the exponential series and
$k!\ge2\,3^{k-2}$ for $k\ge2$ imply
\[
e^{\theta x}-1-\theta x
\le\frac{\theta^2x^2}{2(1-\theta J/3)}.
\]
For an independent centered variable this gives
$\E e^{\theta\xi}\le\exp[\theta^2\E\xi^2/(2(1-\theta J/3))]$.
Multiply the moment generating functions and use Markov's inequality.
Choosing $\theta=e/(V+Je/3)$ gives the positive tail; repeat with $-\xi$
and add the two bounds.

For the jump martingales here, write
$N=\sum_j\int g_j(\dd N_j-\lambda_j\dd t)$, with predictable intensities
and predictable jumps $|g_j|\le J$. Conditioning over a short interval,
or applying the product rule between jumps, shows that
\[
\exp\!\left(\theta[N(t)-N(0)]-\sum_j\int_0^t
\lambda_j(s)[e^{\theta g_j(s)}-1-\theta g_j(s)]\dd s\right)
\]
is a nonnegative local martingale, hence has expectation at most one.
For detail, a type-$j$ jump has conditional probability
$\lambda_j\dd t+o(\dd t)$; its multiplier is $e^{\theta g_j}$, while the
continuous compensator multiplier cancels exactly that conditional first
order change. Multiplication over partitions and localization justify the
identity; the processes here have finitely many jumps and bounded intensities
on bounded horizons, so dominated limits apply after localization.
The series bound makes
$\exp(\theta[N-N(0)]-\theta^2\langle N\rangle/[2(1-\theta J/3)])$
a supermartingale. Stop at the first crossing of $e$ or at $R$.
The same choice of $\theta$ and the bracket bound give the positive maximal
tail. Apply to $-N$ and add. Starting at an interval endpoint gives the
conditional assertion by the identical conditional expectation calculation.
\end{proof}

\begin{lemma}[Hoeffding, Azuma, and Poisson tails]\label{lem:basicexp}
For independent $Y_i$ in intervals of lengths $D_i$,
\[
\Pp(|\sum_i(Y_i-\E Y_i)|>e)\le2\exp[-2e^2/\sum_i D_i^2].
\]
For $q$ martingale differences of absolute size at most $D$, the bound is
$2\exp[-e^2/(2qD^2)]$. For a Poisson variable of mean $\mu$, a cap
$m\ge2\mu$ obeys $\Pp(N\ge m)\le e^{-c m}$ for an absolute $c>0$.
\end{lemma}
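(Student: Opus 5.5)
We prove the three assertions separately. Hoeffding follows from a Chernoff bound with a one-variable moment estimate. Azuma uses the same estimate conditionally on the past. The Poisson cap uses the exact Poisson moment generating function.

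\emph{Hoeffding.} First establish Hoeffding's lemma: for a centered variable $\xi$ supported in an interval $[\alpha,\alpha+D]$,
\[
\E e^{\theta\xi}\le e^{\theta^2D^2/8}\quad\text{for all real }\theta.
\]
Convexity bounds $e^{\theta x}$ on the interval by its chord. Taking expectations and using $\E\xi=0$ gives $\E e^{\theta\xi}\le e^{\psi(\theta D)}$, where $\psi(u)=-pu+\log(1-p+pe^u)$ and $p=-\alpha/D\in[0,1]$. One checks $\psi(0)=\psi'(0)=0$. Also $\psi''(u)=r(1-r)\le1/4$, where $r\in[0,1]$ is the tilted Bernoulli parameter. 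Taylor's theorem with remainder then gives $\psi(u)\le u^2/8$.

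By independence, the moment generating function of $\sum_i(Y_i-\E Y_i)$ is at most $\exp[\theta^2\sum_iD_i^2/8]$. Markov's inequality at the choice $\theta=4e/\sum_iD_i^2$ gives the one-sided bound $\exp[-2e^2/\sum_iD_i^2]$. Applying the same argument to $-Y_i$ and adding the two tails gives the factor two.

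\emph{Azuma.} For differences $\xi_j$ with $|\xi_j|\le D$ and $\E(\xi_j\mid\mathcal F_{j-1})=0$, apply the previous lemma conditionally on $\mathcal F_{j-1}$. The interval is $[-D,D]$, of length $2D$, so
\[
\E(e^{\theta\xi_j}\mid\mathcal F_{j-1})\le e^{\theta^2D^2/2}.
\]
Iterating the tower property over $q$ steps bounds the moment generating function of the sum by $e^{q\theta^2D^2/2}$. Markov's inequality with $\theta=e/(qD^2)$, applied to both signs, gives $2\exp[-e^2/(2qD^2)]$. This is exactly the form used in~\eqref{eq:seedexp} with $q=a$ draws.

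\emph{Poisson tail.} For $N\sim\mathrm{Poisson}(\mu)$ we have $\E e^{\theta N}=\exp[\mu(e^\theta-1)]$. Markov's inequality gives, for $\theta>0$,
\[
\Pp(N\ge m)\le\exp[\mu(e^\theta-1)-\theta m].
\]
Take $\theta=\log(m/\mu)$ when $\mu>0$; when $\mu=0$ the tail is zero. The exponent becomes $-m[\log(m/\mu)-1+\mu/m]$. The function $g(x)=\log x-1+1/x$ is increasing for $x\ge1$, since $g'(x)=(x-1)/x^2\ge0$. Hence for $x=m/\mu\ge2$ we get $g(x)\ge g(2)=\log2-1/2>0$. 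This gives $c=\log2-1/2$. The same computation yields~\eqref{eq:qOverflow}.

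There is no real obstacle. The only step needing care is Hoeffding's lemma, specifically the bound $\psi''\le1/4$ via the tilted-Bernoulli variance.
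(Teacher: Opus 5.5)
Your proposal is correct and follows essentially the same Chernoff and moment-generating-function route as the paper: a tilted-variance bound giving $\E e^{\theta\xi}\le e^{\theta^2D^2/8}$, conditional iteration with range $2D$ for Azuma, and the exact Poisson transform. The differences are minor. You first reduce to a two-point law by the chord bound, whereas the paper bounds the tilted variance of the general law directly by $D^2/4$; and you optimize at $\theta=\log(m/\mu)$, whereas the paper simply takes $\theta=\log2$. Both give the same $c=\log2-1/2$, and your choice of $\theta$ also recovers \eqref{eq:qOverflow}.
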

\begin{proof}
For a bounded variable of interval length $D$, put
$f(\theta)=\log\E e^{\theta Y}$. Its second derivative is a variance under
the tilted law. Every law on an interval of length $D$ has variance at most
$D^2/4$: its squared distance from the midpoint is at most $D^2/4$, and
variance minimizes the mean squared distance over constants. Thus two
integrations give $f(\theta)-f(0)-\theta f'(0)\le\theta^2D^2/8$.
Multiply and optimize at $\theta=4e/\sum_iD_i^2$ to prove Hoeffding.
For a martingale difference of absolute size $D$, conditional range length
is $2D$; iterate the conditional bound and optimize to obtain Azuma.
Finally the Poisson mass series gives
$\E e^{\theta N}=\exp[\mu(e^\theta-1)]$. At $\theta=\log2$,
$\Pp(N\ge m)\le\exp[\mu-m\log2]\le
\exp[-(\log2-1/2)m]$.
\end{proof}

The exact sampling formula~\eqref{eq:seedvar} follows by seed indicators
$I_i$ with $\E I_i=a/n$ and $\E I_iI_j=a(a-1)/(n(n-1))$ for $i\ne j$:
expanding the variance of $\sum_i I_ig_i$ gives
$a(n-a)/(n-1)$ times empirical variance with divisor $n$.
For a degree class it gives the hypergeometric variance used in
\eqref{eq:seeduniform}. The sampled-sum martingale increment in
\eqref{eq:seedexp} follows by averaging the remaining $a-j$ draws over the
remaining population after draw $j$ and subtracting the preceding conditional
mean; the result is exactly $(n-a)(g_{I_j}-\bar g_{j-1})/(n-j)$.

\section{Normal arrays and compact functional convergence}\label{app:limits}
\begin{lemma}[Small independent summands]\label{lem:CLT}
Suppose an independent centered triangular array has
$\max_i|\xi_{ni}|\le\eta_n\to0$ and
$\sum_i\E\xi_{ni}^2\to v<\infty$. Its sum tends in distribution to a
centered normal variable of variance $v$. The vector assertion holds when
every fixed linear combination satisfies the bounds and covariances converge.
\end{lemma}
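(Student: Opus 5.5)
The plan is the classical characteristic-function (Lindeberg) argument, carried out by hand so that nothing beyond elementary inequalities is imported. Write $S_n=\sum_i\xi_{ni}$ and $v_n=\sum_i\E\xi_{ni}^2$. I will show that $\E e^{itS_n}\to e^{-vt^2/2}$ for every real $t$. Lévy's continuity theorem then gives the scalar assertion, and for the vector assertion Cramér--Wold reduces it to fixed linear combinations. The linear-combination hypotheses are exactly the scalar hypotheses, and the limit variance is the limit of the quadratic form in the converging covariances.

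The first step is a second-order expansion for each summand. From $|e^{ix}-1-ix+x^2/2|\le|x|^3/6$ and $\E\xi_{ni}=0$,
\[
\bigl|\E e^{it\xi_{ni}}-(1-t^2\E\xi_{ni}^2/2)\bigr|\le |t|^3\eta_n\E\xi_{ni}^2/6,
\]
using $|\xi_{ni}|^3\le\eta_n\xi_{ni}^2$. Summing over $i$ gives a total of at most $|t|^3\eta_nv_n/6\to0$.

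The second step compares the two products. I use the product-comparison inequality $|\prod a_i-\prod b_i|\le\sum|a_i-b_i|$ for complex numbers of modulus at most one. For fixed $t$ the numbers $b_i=1-t^2\E\xi_{ni}^2/2$ lie in $[0,1]$ eventually, because $\E\xi_{ni}^2\le\eta_n^2\to0$. So $\prod_i\E e^{it\xi_{ni}}$ differs from $\prod_i b_i$ by $o(1)$. Next, $\log b_i=-t^2\E\xi_{ni}^2/2+O(t^4(\E\xi_{ni}^2)^2)$, and the error terms sum to at most $Ct^4\eta_n^2v_n\to0$. Hence $\prod_ib_i\to e^{-vt^2/2}$. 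Independence makes the characteristic function of $S_n$ equal to the product, which finishes the argument. If $v=0$ the limit is the point mass at $0$, and the same computation covers that case.

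I do not see a genuine obstacle here. The only care needed is to keep the per-summand variances uniformly small, via $\E\xi_{ni}^2\le\eta_n^2$, so that the logarithm expansion and the modulus-one condition are legitimate for each fixed $t$. To stay fully self-contained in this appendix, instead of citing Lévy's theorem I would prove the distributional convergence directly: smooth the indicator by convolution with a narrow Gaussian and express the result through an integral of characteristic functions, with dominated convergence justified by the pointwise convergence already obtained.
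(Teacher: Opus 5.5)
Your proof is correct and is essentially the paper's own. It uses the same cubic Taylor remainder $|t|^3\eta_n\E\xi_{ni}^2/6$, the same $\eta_n^2\sum_i\E\xi_{ni}^2$ control of the logarithmic error (your modulus-one product comparison is a tidy way to avoid complex logarithms), a linear-combination reduction for vectors, and the same Gaussian-smoothing/Fourier-inversion finish as a self-contained substitute for L\'evy's theorem. If you write out that finish, you also need, as the paper does, the Chebyshev tightness bound $\Pp(|S_n|>M)\le\sum_i\E\xi_{ni}^2/M^2$ (or Scheff\'e's lemma for the smoothed densities), because pointwise convergence of characteristic functions alone does not control the unbounded range of a smoothed half-line indicator.
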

\begin{proof}
At fixed $t$, Taylor expansion gives
\[
\E e^{it\xi_{ni}}=1-t^2\E\xi_{ni}^2/2+r_{ni},\qquad
|r_{ni}|\le |t|^3\eta_n\E\xi_{ni}^2/6.
\]
Take logarithms and sum. The summed remainders tend to zero, as do quadratic
logarithm errors, bounded by a constant times
$\max_i\E\xi_{ni}^2\sum_i\E\xi_{ni}^2\le
\eta_n^2\sum_i\E\xi_{ni}^2$. Thus characteristic functions tend to
$e^{-vt^2/2}$. To obtain distribution convergence directly, bounded second
moments give tightness by Chebyshev. On a large compact interval, approximate
a bounded continuous test function by Gaussian convolution of a smooth
compact cutoff. Fourier inversion for that integrable smooth function and
dominated convergence of characteristic functions give its expected limit.
The tightness bound controls the discarded tail and uniform continuity
controls smoothing error; increase the cutoff and shrink the smoothing
variance. For vectors apply the same expansion to every $t\cdot\xi_{ni}$
and use finite-dimensional Gaussian smoothing and Fourier inversion.
All estimates are uniform if the maximum and variance errors are uniform
over a conditional parameter, so averaging preserves the limit.
\end{proof}

\begin{lemma}[Mesh oscillations and the continuous functional limit]\label{lem:compact}
Let càdlàg processes $X_n$, regarded as random elements of $D([0,R])$,
have finite-dimensional laws
converging to a continuous-path process $X$. If for every $\varepsilon>0$,
\[
\lim_{\delta\downarrow0}\limsup_n
\Pp\!\left(\sup_{|s-t|\le\delta}|X_n(s)-X_n(t)|>\varepsilon\right)=0,
\]
then $X_n$ converges in $D([0,R])$ with the Skorokhod topology to $X$.
In particular minima converge, as do bounded Borel-measurable functionals
that are continuous at every continuous path.
\end{lemma}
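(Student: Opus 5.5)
This is the standard argument that convergence of finite-dimensional laws plus an asymptotic modulus-of-continuity bound gives weak convergence to a continuous limit. I will organize it so the convergence is obtained in the uniform metric, which implies the Skorokhod statement. The plan has three steps.

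\emph{Step 1: reduce to mesh interpolants.} For a mesh $\delta=R/N$, let $\Pi_N X_n$ be the piecewise-linear interpolation of $X_n$ at the grid times $jR/N$. The oscillation hypothesis gives
\[
\lim_N\limsup_n\Pp(\|X_n-\Pi_NX_n\|_\infty>2\varepsilon)=0,
\]
because every point lies within $\delta$ of a grid point. The map from the grid vector to the interpolated path is continuous into $C([0,R])$. Hence, by finite-dimensional convergence and the continuous mapping principle, $\Pi_NX_n\Rightarrow\Pi_NX$ in the uniform topology for each fixed $N$. Since $X$ has continuous paths on a compact interval, it is uniformly continuous almost surely, so $\|\Pi_NX-X\|_\infty\to0$ almost surely as $N\to\infty$.

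\emph{Step 2: a three-epsilon argument.} I will use the standard approximation lemma: if $Y_{n,N}\Rightarrow Y_N$ for each $N$, $Y_N\Rightarrow Y$, and $\lim_N\limsup_n\Pp(d(Y_{n,N},X_n)>\varepsilon)=0$ for every $\varepsilon$, then $X_n\Rightarrow Y$. I will prove it directly rather than cite it. Take a bounded uniformly continuous (Lipschitz) test function $f$ and bound $|\E f(X_n)-\E f(X)|$ by three terms:
\begin{itemize}
\item $|\E f(X_n)-\E f(\Pi_NX_n)|$, controlled by the oscillation hypothesis;
\item $|\E f(\Pi_NX_n)-\E f(\Pi_NX)|$, which tends to zero by Step 1;
\item $|\E f(\Pi_NX)-\E f(X)|$, which tends to zero by dominated convergence.
\end{itemize}
By the portmanteau theorem, bounded Lipschitz functions suffice, and this yields $X_n\Rightarrow X$ in the uniform sense, as outer-probability statements on $D([0,R])$. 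One measurability caveat needs care: the uniform norm on $D$ is not separable. I plan to handle it either by noting that the limit is concentrated on the separable space $C([0,R])$, or by working throughout with the Skorokhod metric, which the uniform metric dominates. Then the same inequalities give convergence in the Skorokhod topology, where $D$ is Polish.

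\emph{Step 3: functionals.} The minimum is continuous in the uniform metric. At a continuous path, Skorokhod convergence is equivalent to uniform convergence. So the extended continuous mapping theorem gives convergence of $\E\phi(X_n)$ for every bounded Borel $\phi$ that is continuous at every continuous path, because $X$ places full mass on $C([0,R])$.

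I expect the main obstacle to be the measurability subtlety in Step 2, not the estimates. I will resolve it by carrying out the comparison with the Skorokhod distance, which is bounded by the uniform distance, so every inequality used remains valid. Everything else is routine.
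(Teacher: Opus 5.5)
Your proposal is correct and follows the paper's proof essentially step for step. Both use linear interpolation on a fixed mesh, continuous mapping of the finite-dimensional grid vector, a three-term comparison for bounded Lipschitz test functions in the Skorokhod metric (which the uniform distance dominates), and continuity of the minimum at continuous paths. Your explicit choice to run the comparison in the Skorokhod metric, to sidestep the nonseparability of the uniform norm, is what the paper does implicitly.
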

\begin{proof}
On a fixed mesh of size $\delta$, let $I_\delta X_n$ be linear interpolation
of the mesh values. Its uniform distance from $X_n$ is at most twice the
oscillation at distance $\delta$. On a fixed mesh interpolation is a
continuous finite-dimensional map, so its law tends to that of $I_\delta X$.
For a bounded Lipschitz functional in the Skorokhod metric compare expectations on
$X_n$ and $I_\delta X_n$ by the oscillation on the good event and by twice
the bound otherwise. The hypothesis removes the error as $\delta\downarrow0$;
continuity of $X$ gives $I_\delta X\to X$ almost surely uniformly. This
proves the assertion for the bounded Lipschitz determining class of the
path metric. The same compact interpolations and oscillation bounds give
continuous-limit convergence in the Skorokhod metric for the original
càdlàg paths. The minimum is continuous in the Skorokhod metric at
continuous paths and one-Lipschitz in the uniform metric,
and threshold probabilities converge wherever its limiting law has no atom.
\end{proof}
The exponential mesh-cell estimate in Section~\ref{sec:X} gives precisely
this oscillation bound. Its martingale correction tends to zero uniformly
in probability and its finite-dimensional laws follow from
Lemma~\ref{lem:CLT}. Every step of the functional convergence is therefore
included, without borrowing a martingale functional central limit theorem.

\section{Brownian boundary facts}\label{app:Brownian}
Brownian motion is the continuous centered Gaussian process with covariance
\[
\E\mathcal B(s)\mathcal B(t)=\min(s,t).
\]
This covariance implies independent
stationary Gaussian increments and
$\mathcal B(\lambda t)/\sqrt\lambda\overset{d}=\mathcal B(t)$.

\begin{lemma}[Reflection and deterministic drift]\label{lem:Brownianmin}
For $h>0$,
$\Pp(\min_{s\le R}\mathcal B(s)\le-h)=2\Pp(\mathcal B(R)\le-h)$.
On a finite interval, adding $g(t)=\int_0^t f(s)\dd s$ for continuous
deterministic $f$ preserves Brownian null events. Thus for $x>0$,
$\min_{s\le R}[x+\mathcal B(s)+g(s)]$ has no atom at zero; for $x=0$
the drifted path is negative somewhere in every interval $[0,R]$, $R>0$,
almost surely.
\end{lemma}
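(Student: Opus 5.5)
The plan is to prove the three assertions of Lemma~\ref{lem:Brownianmin} in order: the reflection identity, Gaussian shift equivalence on a finite interval, and the two no-atom and immediate-negativity consequences. The only inputs are the covariance definition of $\mathcal B$, its independent Gaussian increments, and continuity of paths.

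\emph{Reflection.} First discretize. On the dyadic grid of mesh $R2^{-k}$, stop at the first grid time $\sigma$ at which $\mathcal B\le-h$. Conditional on $\sigma$ and the past, the remaining increment $\mathcal B(R)-\mathcal B(\sigma)$ is a centered Gaussian, hence symmetric. This gives
\[
\Pp(\mathcal B(R)\le-h)\le\Pp(\sigma\le R)\le2\Pp(\mathcal B(R)\le-h)+\epsilon_k .
\]
The correction $\epsilon_k$ accounts for the overshoot below $-h$ at time $\sigma$. I would control it by replacing $h$ with $h\pm\gamma$: the overshoot exceeds $\gamma$ only when the grid oscillation exceeds $\gamma$, and that probability tends to zero as $k\to\infty$ by path continuity. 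Letting $k\to\infty$ and then $\gamma\downarrow0$ gives the identity, using monotone convergence of the grid minima to the continuous minimum and continuity of the Gaussian law of $\mathcal B(R)$ in $h$. The overshoot step is the only delicate point of this part.

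\emph{Drift.} Let $g'=f$ with $f$ continuous on $[0,R]$. I would show that the law of $\mathcal B+g$ on $C[0,R]$ is equivalent to Wiener measure, with density
\[
\exp\Bigl\{\int_0^R f\,\dd\mathcal B-\tfrac12\int_0^R f^2\,\dd s\Bigr\}.
\]
Because $f$ is deterministic, $\int f\,\dd\mathcal B$ is an $L^2$ limit of Riemann sums, which are jointly Gaussian with the increments. The identity therefore reduces to a finite-dimensional Gaussian computation. For step functions $f$, completing the square in the product of increment densities gives the tilt exactly, and the resulting cylinder identity extends to all path events by a monotone-class argument. For general continuous $f$, approximate uniformly by step functions $f_k$. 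The densities converge in $L^1$ because their exponents converge in $L^2$ with uniformly bounded exponential moments. The shifts $g_k$ converge uniformly to $g$, so the laws of $\mathcal B+g_k$ converge, and the density formula passes to the limit. Strict positivity of the density gives mutual absolute continuity, so null events are preserved in both directions. I expect this general-$f$ approximation to be the main obstacle, and I would keep it entirely in finite dimensions plus $L^1$ limits.

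\emph{Consequences.} For $x>0$, the event $\{\min_{s\le R}(x+\mathcal B(s))=0\}$ equals $\{\min_{s\le R}\mathcal B(s)=-x\}$. By the reflection identity this has probability at most
\[
2\Pp(\mathcal B(R)\le-x)-2\Pp(\mathcal B(R)<-x)=0 .
\]
Equivalence then transfers this null event to the drifted path. For $x=0$, reflection gives $\Pp(\min_{s\le r}\mathcal B<0)=\lim_{h\downarrow0}2\Pp(\mathcal B(r)\le-h)=1$ for every $r>0$. Intersecting over $r=1/j$ shows that almost surely $\mathcal B$ is negative somewhere in every interval $[0,1/j]$, hence in every $[0,R]$ with $R>0$. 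The complement of this event is Wiener-null, so it is also null for $\mathcal B+g$ by equivalence, which gives the stated conclusion for the drifted path.
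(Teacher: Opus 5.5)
Your reflection step has a misoriented inequality. As displayed, it only yields $\Pp(\mathcal B(R)\le-h)\le\Pp(\min_{s\le R}\mathcal B(s)\le-h)\le2\Pp(\mathcal B(R)\le-h)$, not the identity. The upper bound needs no $\epsilon_k$: on $\{\sigma\le R\}$ you have $\mathcal B(\sigma)\le-h$ and $\Pp(\mathcal B(R)\le\mathcal B(\sigma)\mid\mathcal F_\sigma)\ge1/2$, so $\Pp(\sigma\le R)\le2\Pp(\mathcal B(R)\le-h)$ exactly. The overshoot $O=-h-\mathcal B(\sigma)$ only obstructs the lower bound. Let $\omega_k$ be the largest increment of $\mathcal B$ over one grid step. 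Then $O>\gamma$ forces $\omega_k>\gamma$, because $\sigma>0$. Conditional symmetry of $\mathcal B(R)-\mathcal B(\sigma)$ given $\mathcal F_\sigma$, on $\{\sigma<R,\ O\le\gamma\}$, gives
\[
\Pp(\sigma\le R)\ge\Pp(\mathcal B(R)\le-h)+\Pp(\mathcal B(R)<-h-2\gamma)-\Pp(\omega_k>\gamma).
\]
This is the inequality your $h\pm\gamma$ device must produce. With it, let $k\to\infty$, then $\gamma\downarrow0$, and apply your sandwich between grid and continuous minima to get the identity. So the idea is present; only the displayed chain needs rewriting.

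Once that is fixed, your proof is correct. The drift and consequence steps follow the paper's proof closely:
\begin{itemize}
\item step-function tilts identified on finite-dimensional Gaussian vectors;
\item $L^1$ convergence of the densities from bounded second moments;
\item identification of the law of $\mathcal B+g$ (the paper uses cylinder sets after $L^2$ approximation, you use bounded continuous functionals after uniform approximation);
\item no atom of the minimum law at $-x$, and $h\downarrow0$ over countably many $R$.
\end{itemize}

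Reflection is where your route genuinely differs. The paper proves independence and symmetry of the post-hitting increments at the continuous hitting time of $-h$, approximating it from above by dyadic stopping times. It then applies the reflected-path bijection. You never leave grid stopping times, where the strong Markov property is just a partition over values, and you pay instead with overshoot bookkeeping and the grid-versus-continuous sandwich. Your route is more elementary and avoids the path-space reflection map. The paper's route is shorter, and its bijection also gives the joint law of minimum and endpoint. The paper later invokes that joint law for the killed kernel \eqref{eq:seedkernel}; your grid argument adapts to it, but would have to be redone in joint form.
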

\begin{proof}
At the first hit of $-h$, reflect all future increments. Symmetry and
independence preserve their law and put paths ending above $-h$ in
one-to-one probability-preserving correspondence with those ending below
$-h$; the endpoint equality has zero Gaussian probability. Independence
at the hitting time can be justified without another theorem: approximate
it from above by mesh times $2^{-k}$. At each mesh stopping time, partition
by its value and use deterministic-time independent increments. Continuity
and convergence of the mesh times give the same independence and symmetry
at the hitting time, first for finite cylinder events, then for the
continuous path sigma field. This proves reflection.

Construct $I_f=\int_0^R f\dd\mathcal B$ as the mean-square limit of step
functions. For step $f$, Gaussian increments show the strictly positive
density $L_f=\exp(I_f-\tfrac12\int f^2)$ has expectation one. Under this
density a finite Brownian vector has its original covariance and mean
$g(t)$, by multiplying its Gaussian exponential moment by $L_f$.
Approximate continuous $f$ in $L^2$. Integrals converge in mean square,
drifts converge uniformly by Cauchy--Schwarz, and densities converge in
$L^1$: their second moments $e^{\|f\|_2^2}$ stay bounded and give uniform
integrability. The density identity follows for continuous $f$.
Cylinder sets at rational times generate the continuous path sigma field,
so the law of $\mathcal B+g$ has this density relative to Brownian law.
Strict positivity gives equivalence of null events.

For $x>0$ reflection makes the Brownian minimum law continuous at $-x$;
the drift density preserves the zero probability of that exact minimum.
For $x=0$, let $h\downarrow0$ in reflection to get probability one of a
negative minimum on each $[0,R]$, $R>0$. Taking countably many positive
rational $R$ and using absolute continuity gives the assertion for the drifted path.
\end{proof}

\begin{lemma}[Growth, tubes, and linear-drift survival]\label{lem:Brownianother}
Almost surely $\mathcal B(s)=o(s^2)$ as $s\to\infty$.
For every $R,\eta>0$, $\Pp(\sup_{s\le R}|\mathcal B(s)|<\eta)>0$;
hence a positive-width tube around any continuously differentiable path
starting at zero has positive probability. If $\mu,h>0$,
\[
\Pp(\inf_{s\ge0}[\mathcal B(s)+\mu s]\le-h)\le e^{-2\mu h}<1.
\]
\end{lemma}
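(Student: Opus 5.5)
The statement has three independent parts, and I would prove them in turn, using only the Gaussian tools already established in Appendix~\ref{app:prob} and Lemma~\ref{lem:Brownianmin}.

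\textbf{Growth.} Apply the dyadic Doob argument already used for~\eqref{eq:qBrownianTail}. By Lemma~\ref{lem:Doob} and scaling,
\[
\E\sup_{s\le 2^{j+1}}|\mathcal B(s)|^2\le 4\cdot 2^{j+1}.
\]
Markov's inequality then gives
\[
\Pp\Bigl(\sup_{2^j\le s\le 2^{j+1}}|\mathcal B(s)|>\varepsilon 2^{2j}\Bigr)\le 8\cdot2^{j}/(\varepsilon^2 2^{4j}),
\]
which is summable in $j$. Borel--Cantelli then yields, almost surely, $|\mathcal B(s)|\le\varepsilon 2^{2j}\le\varepsilon s^2$ on $[2^j,2^{j+1}]$ for all large $j$. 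Taking $\varepsilon=1/k$ over countably many $k$ proves $\mathcal B(s)=o(s^2)$.

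\textbf{Small balls and tubes.} I need $\Pp(\sup_{s\le R}|\mathcal B(s)|<\eta)>0$. Split $[0,R]$ into $N$ cells of length $h=R/N$, with $N$ large enough that reflection (Lemma~\ref{lem:Brownianmin}) gives
\[
\Pp\Bigl(\sup_{s\le h}|\mathcal B(s)|\ge\eta/4\Bigr)\le 4\Pp\bigl(\mathcal B(h)\ge\eta/4\bigr)<1/2 .
\]
I would not build this from independent cells alone, since the cell endpoints could drift. Instead, use the Markov property at the deterministic cell endpoints and require on each cell that the endpoint value returns to $[-\eta/4,\eta/4]$ while the cell oscillation stays below $\eta/4$. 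Uniformly over starting points in $[-\eta/4,\eta/4]$, this event has conditional probability bounded below by a fixed $p_0>0$. That bound follows from the Gaussian endpoint density being positive on the target interval, combined with the oscillation bound: take the joint event that the endpoint increment lies in a suitable interval of length $\eta/8$ and the supremum is small, and control it through the reflection tail with $h$ small. Multiplying over the $N$ cells gives probability at least $p_0^N>0$.

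For a tube around a $C^1$ path $g$ with $g(0)=0$, set $f=g'$ and apply the change of measure in Lemma~\ref{lem:Brownianmin}. The density is strictly positive, so the positive-probability small-ball event for $\mathcal B$ retains positive probability for $\mathcal B-g$. This is exactly the tube event.

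\textbf{Linear-drift survival.} For $\mu,h>0$, the process $M_s=\exp\{-2\mu(\mathcal B(s)+\mu s)\}$ is a martingale, because $\E e^{\theta\mathcal B(s)}=e^{\theta^2s/2}$ and the exponent $\theta=-2\mu$ makes the drift cancel exactly. Stop at $\sigma=\inf\{s:\mathcal B(s)+\mu s\le-h\}\wedge t$. Optional stopping for the bounded stopped martingale (bounded by $e^{2\mu h}$ before the hit) gives
\[
1=\E M_{\sigma}\ge e^{2\mu h}\Pp(\sigma<t).
\]
Letting $t\to\infty$ yields the bound $e^{-2\mu h}<1$.

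The main obstacle is the small-ball step, specifically making the uniform-in-start lower bound $p_0$ rigorous from reflection and Gaussian densities alone. If that becomes delicate, my fallback is to bound $\Pp(\sup_{s\le R}|\mathcal B(s)|<\eta)$ below via the killed transition kernel~\eqref{eq:seedkernel} applied on successive cells, or to cite the strictly positive density of killed Brownian motion in the strip.
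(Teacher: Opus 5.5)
Your proposal is correct and follows the paper's proof essentially step for step. It uses dyadic Doob bounds with Borel--Cantelli for $\mathcal B(s)=o(s^2)$, and a cellwise Markov argument that returns each cell endpoint to $[-\eta/4,\eta/4]$ while reflection controls the within-cell oscillation, giving a product bound $p_0^N>0$. For tubes it uses the strictly positive Gaussian-shift density of Lemma~\ref{lem:Brownianmin}, and for linear-drift survival the exponential martingale $\exp\{-2\mu(\mathcal B(s)+\mu s)\}$ stopped at the hit of $-h$ or at a finite horizon.

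The only detail to tighten is quantitative. Choose the endpoint interval abutting $0$ on the side toward the center, so that its worst-case mass tends to $1/2$ from below as $h\downarrow0$. Then take $h$ small enough that $4\Pp(\mathcal B(h)\ge\eta/4)$ lies below that mass by a fixed margin (e.g.\ below $1/4$), rather than merely below $1/2$ as in your displayed condition.
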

\begin{proof}
Doob gives
$\Pp(\sup_{s\le2^k}|\mathcal B(s)|>\varepsilon2^{2k})
\le4\,2^{-3k}/\varepsilon^2$. The sum is finite; a union bound on each
tail shows only finitely many such events occur almost surely. Apply for
rational positive $\varepsilon$ and compare neighboring dyadic intervals
to obtain subquadratic growth.

For a small ball partition $[0,R]$ into sufficiently short intervals of
length $h_*$. Starting from $|y|\le\eta/4$, the probability that the next
endpoint is again in $[-\eta/4,\eta/4]$ is bounded below near $1/2$ as
$h_*\downarrow0$. The required Gaussian increment interval has length
$\eta/2$, center between $-\eta/4$ and $\eta/4$; its minimum mass occurs
at an extreme and tends to $1/2$. Reflection and a union of signs bound
the chance its increment ever exceeds $\eta/2$ in absolute value by
$4\Pp(N(0,h_*)\ge\eta/2)\to0$. Choose $h_*$ so the endpoint and excursion
conditions have joint conditional probability at least $1/4$. On that event
the segment stays within $3\eta/4$ of zero and ends within $\eta/4$.
Repeating over finitely many segments has probability at least $(1/4)^N>0$.
The deterministic Gaussian shift in Lemma~\ref{lem:Brownianmin} transfers
this positive probability to a tube around a smooth path.

Finally $\exp[-2\mu(\mathcal B(s)+\mu s)]$ is a nonnegative martingale,
as the Gaussian increment moment cancels its drift. Stop at the first
hit of $-h$ or deterministic $R$; on the hit its value is $e^{2\mu h}$,
while stopped expectation is at most one. Thus hit probability by $R$
is at most $e^{-2\mu h}$; let $R\to\infty$. Deterministic-time independent
future increments give the conditional continuation bound used above.
\end{proof}

\section{Integer arithmetic and graphical realizability}\label{sec:arith}
Writing $V=\Var(D)$, direct substitution into~\eqref{eq:bK} gives
\begin{equation}\label{eq:criticalV}
b=\frac{(d-c)(c+1-v)+c^2}{d^2}
=\frac{d-c}{d^3}(V_*-V),\qquad
V_*=d+\frac{cd^2}{d-c}.
\end{equation}
Thus $v=c+1$ gives $b=c^2/d^2$, not zero. If $C^2<c+1$, the envelope gives
$b\ge[(d-c)(c+1-C^2)+c^2]/d^2$, of order $d^{-1}$. If $C^2=c+1$, it gives
$b\ge c^2/d^2>0$. With integer mean there is the additional bound
$V\le\lfloor C\sqrt d\rfloor^2$, so endpoint radius rounding may make the
lower bound larger. A strict capacity margin $C^2>c+1$ is needed to cross zero.

For integer $d$, write $x_i=d_i-d$. Since $\sum_i x_i=0$ and
$x_i^2\equiv x_i\pmod2$, necessarily
\begin{equation}\label{eq:lattice}
V=2Q/n\ (Q\in\mathbb Z),\qquad
b_Q=\frac{d-c}{d^3}(V_*-2Q/n),\qquad
\Delta_b=\frac{2(d-c)}{nd^3}.
\end{equation}
At a fixed rational nonintegral mean $d=J/n$, the identity
$\sum_i d_i^2\equiv J\pmod2$ likewise puts possible variances on an affine
lattice with spacing $2/n$. Full attainment below is claimed only for integer
means. Even $n$ is a convenient sufficient parity choice, not a necessary one.

\begin{lemma}[A graphicality switch]\label{lem:graphical}
Let $n$ be even and let $d,r$ be nonnegative integers. Put
$\delta=d-r\ge2$, $\Delta=d+r<n$. If
\begin{equation}\label{eq:graphcond}
(n-2\Delta-3)\delta>(2\Delta+3)\Delta,
\end{equation}
every integer sequence in $[\delta,\Delta]$ with sum $nd$ is graphical.
\end{lemma}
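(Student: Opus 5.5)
I will verify the Erdős--Gallai criterion directly rather than build a graph; the switching construction suggested by the title is only a fallback. The total $nd$ is even because $n$ is even, so only the Erdős--Gallai inequalities remain. Sort the sequence as $d_1\ge\cdots\ge d_n$, all in $[\delta,\Delta]$. For each $1\le k\le n$ I must show
\[
\sum_{i\le k}d_i\le k(k-1)+\sum_{i>k}\min(d_i,k).
\]
The left side is at most $k\Delta$, and the right side is at least $k(k-1)+(n-k)\min(\delta,k)$. I will split into three ranges of $k$ according to how $k$ compares with $\delta$ and $\Delta$.

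\emph{Range $k\ge\Delta+1$.} Here $k\Delta\le k(k-1)$, so the inequality holds trivially.

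\emph{Range $k\le\delta$.} Every $\min(d_i,k)$ equals $k$, so the right side is $k(k-1)+(n-k)k=k(n-1)$. This is at least $k\Delta$ because $\Delta<n$.

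\emph{Range $\delta<k\le\Delta$.} This is the only substantive case. Here the right side is at least $k(k-1)+(n-k)\delta$, and it suffices to show
\[
k(\Delta-k+1)\le(n-k)\delta .
\]
The left side is a concave quadratic in $k$, bounded by $\Delta(\Delta+1)$ or more crudely by $(\Delta+1)^2/4$. The right side decreases in $k$ and is at least $(n-\Delta)\delta$. It is therefore enough to have $(n-\Delta)\delta\ge\Delta(\Delta+1)$. This follows from \eqref{eq:graphcond}, since $(n-\Delta)\delta\ge(n-2\Delta-3)\delta>(2\Delta+3)\Delta\ge\Delta(\Delta+1)$. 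Combining the three ranges gives graphicality by Erdős--Gallai. The hypothesis $\delta\ge2$ is not needed for this inequality check.

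The main point of care is exactly this middle range: the bounds used for the left and right sides must hold simultaneously, and the substitution of the worst $k$ must respect the sorting. The hypothesis is visibly stronger than what Erdős--Gallai needs, with constants $2\Delta+3$ instead of $\Delta$. That margin suggests the intended proof is instead a switching argument, carried out as follows.
\begin{itemize}
\item Start from any multigraph realization, for example a configuration pairing or a modification of the cyclic construction.
\item Remove each loop or multiple edge $uv$ by choosing an edge $xy$ with $x,y$ outside $N(u)\cup N(v)\cup\{u,v\}$ and replacing $uv,xy$ by $ux,vy$.
\item Use the count of such edges: the graph has at least $n\delta/2$ edges, while at most about $(2\Delta+3)\Delta$ edges touch the forbidden set of at most $2\Delta+3$ vertices.
\end{itemize}
Condition \eqref{eq:graphcond}, rearranged as $n\delta>(2\Delta+3)(\Delta+\delta)$, then guarantees that an admissible $xy$ exists. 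Each switch strictly decreases the number of defective edges, so the process terminates in a simple graph.

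If the direct Erdős--Gallai check goes through as sketched, I will present it as the proof and mention the switching count only as the motivation for the form of \eqref{eq:graphcond}.
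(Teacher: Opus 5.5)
Your Erd\H{o}s--Gallai verification is correct and complete. The three ranges of $k$ cover $1\le k\le n$, and the sum $nd$ is even. In the middle range $k(\Delta-k+1)\le\Delta(\Delta+1)\le(n-\Delta)\delta\le(n-k)\delta$ follows from \eqref{eq:graphcond} as you say. One small wording slip: $(\Delta+1)^2/4$ is the sharper of your two bounds, not the cruder one.

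This is a genuinely different route from the paper, which does not use Erd\H{o}s--Gallai.
\begin{itemize}
\item The paper starts from an explicit circulant $d$-regular graph on $\mathbb Z/n\mathbb Z$.
\item Among simple graphs with $nd/2$ edges and all degrees in $[\delta,\Delta]$, it takes one minimizing $\sum_i(\deg(i)-d_i)^2$.
\item For a deficit vertex $u$ and a surplus vertex $v$, it reduces the discrepancy in one of two ways. Either it moves an edge $vx$ to $ux$ directly. Or, when $N(v)\setminus\{u\}\subseteq N(u)$, it replaces $vx,yz$ by $xy,uz$, where $yz$ is an edge inside $B_*=V\setminus(N(u)\cup N(x)\cup\{u,v,x\})$.
\item If $B_*$ had no internal edge, the double count $|B_*|\delta\le(n-|B_*|)\Delta$ would contradict \eqref{eq:graphcond}.
\end{itemize}
This is where the constants $2\Delta+3$ come from. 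It also explains the hypothesis $\delta\ge2$, which is needed to pick $x\in N(v)\setminus\{u\}$.

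Your guessed fallback (start from a multigraph realization and remove loops and multi-edges) is not the paper's switching, but you do not need it.

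What each approach buys:
\begin{itemize}
\item Yours is shorter and shows that $\delta\ge2$ and most of the margin are superfluous. For instance, $\Delta<n$ together with $(n-\Delta)\delta\ge(\Delta+1)^2/4$ already suffices in your check. The cost is importing the Erd\H{o}s--Gallai theorem as an outside result.
\item The paper's argument is self-contained and constructive. That fits its stated policy that the only external graph input is the enumeration estimate E.
\end{itemize}
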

\begin{proof}
A simple $d$-regular graph exists on $\mathbb Z/n\mathbb Z$: use offsets
$\pm1,\ldots,\pm d/2$ for even $d$, or offsets
$\pm1,\ldots,\pm(d-1)/2$ together with the antipodal matching for odd $d$.
All edges are distinct since $d<n$. Among simple graphs with $nd/2$ edges
and all degrees in $[\delta,\Delta]$, choose one minimizing
$\sum_i(\deg(i)-d_i)^2$. If the discrepancy is positive, equal total degrees
give a deficit vertex $u$ and a surplus vertex $v$. If an $x\in N(v)\setminus
\{u\}$ is not adjacent to $u$, replace $vx$ by $ux$. The degree bounds are
preserved and the discrepancy drops by at least two.

Otherwise choose $x\in N(v)\setminus\{u\}$, possible since $\deg(v)\ge2$.
Let $B_*=V(G)\setminus(N(u)\cup N(x)\cup\{u,v,x\})$. Its size is at least
$n-2\Delta-3$. If it were independent, edge counting would give
$|B_*|\delta\le(n-|B_*|)\Delta\le(2\Delta+3)\Delta$, contrary
to~\eqref{eq:graphcond}. Choose an edge $yz$ inside $B_*$. Replace $vx,yz$
by $xy,uz$. The five vertices are distinct and the new edges were absent.
Only $u$ gains and $v$ loses a degree, preserving the bounds and again reducing
the discrepancy. Minimality is contradicted, so the discrepancy is zero.
\end{proof}

\begin{proposition}[Full variance lattice near criticality]\label{prop:realize}
Fix $0<a_-<c+1<a_+<L<C^2$. For all sufficiently large integer $d$ and all
even $n\ge K_0d$, every $Q\in\mathbb Z$ within one of $nV_0/2$ for some
$V_0\in[a_-d,a_+d]$ is attained by a graphical, symmetric empirical law on
at most the five values $d,d\pm s,d\pm(s+1)$, where $s=\lceil\sqrt{Ld}\rceil$.
One sufficient choice is
\[
K_0=\max\left\{8,\frac{2(H_L+1)}{a_-},\frac2{L-a_+}\right\},\qquad
H_L=(\sqrt L+1)^2(\sqrt L+2)^2.
\]
The actual variance is $2Q/n$ and the envelope~\eqref{eq:envelope} holds.
\end{proposition}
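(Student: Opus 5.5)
The plan is to build an explicit symmetric five-value degree law and then invoke Lemma~\ref{lem:graphical} for graphicality. Put $s=\lceil\sqrt{Ld}\rceil$. We look for nonnegative even integer multiplicities: $n_0$ vertices of degree $d$, $n_1$ vertices at each of $d\pm s$, and $n_2$ vertices at each of $d\pm(s+1)$, with $n_0+2n_1+2n_2=n$. Symmetry makes the degree sum exactly $nd$, so the mean is $d$. The sum of squared deviations is $2n_1s^2+2n_2(s+1)^2$, and we need it to equal $2Q$, that is,
\[
n_1s^2+n_2(s+1)^2=Q.
\]

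For the arithmetic step, the key fact is that $\gcd(s^2,(s+1)^2)=1$. Hence every integer $Q\ge s^2(s+1)^2$ (the Frobenius bound, which is at most $H_L$ times a constant times $d^2$) can be written as $n_1s^2+n_2(s+1)^2$ with $n_1,n_2\ge0$. We can also take $n_2<s^2$, so the correction term is at most $s^2(s+1)^2\le H_Ld^2$ once $s\le\sqrt{Ld}+1$. Here is why $H_L=(\sqrt L+1)^2(\sqrt L+2)^2$ is the right constant: with $d\ge1$ we have $s\le(\sqrt L+1)\sqrt d$ and $s+1\le(\sqrt L+2)\sqrt d$.

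It remains to check that $Q$ lies in the representable range and that $n_0\ge0$. First, $Q\ge nV_0/2-1\ge na_-d/2-1$, and this exceeds $H_Ld^2$ when $n\ge 2(H_L+1)d/a_-$. This is exactly where the first term of $K_0$ comes from. Second, $2n_1+2n_2\le 2Q/s^2+2\le n(a_+d+2/n)/(Ld)+2$, which is at most $n$ when $n(L-a_+)\ge$ a constant times $1$. This is where the term $2/(L-a_+)$ in $K_0$ enters, using $Ld\le s^2$. Parity needs a small adjustment. If $n_1$ or $n_2$ turns out odd, we can instead split the counts so that the $\pm$ classes are paired and the remainder goes into $n_0$. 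The evenness of $n$ makes $n_0=n-2n_1-2n_2$ even automatically. The actual pairing multiplicities need not be even, since symmetry requires only equal numbers at $d+s$ and $d-s$. Thus the multiplicities $n_1,n_2$ are arbitrary nonnegative integers and $n_0$ is fixed by the equation.

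The step I expect to be the main obstacle is the graphicality condition~\eqref{eq:graphcond}. Apply Lemma~\ref{lem:graphical} with $r=s+1$, so that $\delta=d-s-1$ and $\Delta=d+s+1$. For large $d$ we have $\delta\ge d/2\ge2$. The inequality $(n-2\Delta-3)\delta>(2\Delta+3)\Delta$ then reduces to $n\gtrsim 2\Delta+3+(2\Delta+3)\Delta/\delta$, which is roughly $n>2d+6d=8d$ up to lower-order terms. This is covered by $K_0\ge8$ together with slack for large $d$; if it is not quite enough, I would enlarge the constant in the first term. Finally, the envelope~\eqref{eq:envelope} holds because $|d_i-d|\le s+1\le\sqrt{Ld}+2\le C\sqrt d$ for large $d$, using $L<C^2$. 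The variance is exactly $2Q/n$, and we need $\delta\ge4$, which holds eventually.
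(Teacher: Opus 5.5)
Your construction is the paper's own proof: the same symmetric law on $d,d\pm s,d\pm(s+1)$; the same choice of the $(s+1)^2$-multiplicity as the residue in $\{0,\dots,s^2-1\}$, so that $Q\ge(s^2-1)(s+1)^2$ with $(s^2-1)(s+1)^2\le H_Ld^2$ suffices; and the same application of Lemma~\ref{lem:graphical} with $r=s+1$. The parity detour can be dropped, as you eventually note.

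The one step you leave open is \eqref{eq:graphcond}, and your estimate there is off. The threshold is about $4d$, not ``roughly $8d$''. The crude bound $\delta\ge d/2$ by itself only gives a sufficient condition of about $n>12d$. Since $\delta+\Delta=2d$, the condition reads $n\delta>(2\Delta+3)(\delta+\Delta)=2d(2\Delta+3)$. For large $d$ use $s+1\le\sqrt{Ld}+2\le C\sqrt d\le d/4$, which gives $\delta\ge 3d/4$ and $\Delta\le 5d/4$. Then for $n\ge 8d$,
\[
n\delta\ge 6d^2>5d^2+6d\ge(2\Delta+3)(\delta+\Delta)\qquad(d>6).
\]
So the stated $K_0$ works and no enlargement is needed. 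In fact $a_-<L$ gives $2(H_L+1)/a_->2H_L/L>18$, so $n>18d$ always.

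Similarly, in the count step you do not need the ``$+2$''. Exactly $n_1+n_2=Q/s^2-n_2(2s+1)/s^2\le Q/s^2$. Combined with $2Q\le na_+d+2$ and $s^2\ge Ld$, the requirement is precisely $n(L-a_+)d\ge2$, which the third term of $K_0$ supplies.
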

\begin{proof}
Take $d\ge\max\{8,16C^2,4/(C-\sqrt L)^2\}$. Put
$F_s=(s^2-1)(s+1)^2$. Since $F_s\le H_Ld^2$, $n\ge K_0d$ implies
\[
n\ge8d,\qquad na_-d/2\ge F_s+1,\qquad n(L-a_+)d/2\ge1.
\]
The coprime integers $s^2,(s+1)^2$ give a unique
$\ell\in\{0,\ldots,s^2-1\}$ such that $(s+1)^2\ell\equiv Q\pmod{s^2}$.
Put $m=[Q-(s+1)^2\ell]/s^2$. Because $Q\ge F_s$, $m$ is a nonnegative
integer. Also
\[
m+\ell=Q/s^2-(2s+1)\ell/s^2
\le(na_+d/2+1)/s^2\le nLd/(2s^2)\le n/2.
\]
Use $m$ copies of each of $d\pm s$, $\ell$ copies of each of $d\pm(s+1)$,
and $n-2(m+\ell)$ copies of $d$. The counts are nonnegative integers,
opposite deviations give exact mean $d$, and the square sum is $2Q$.
Furthermore $s+1\le\sqrt{Ld}+2\le C\sqrt d\le d/4$.
For $r=s+1$, $\delta\ge3d/4$, $\Delta\le5d/4$, and
\[
n\delta\ge6d^2>5d^2+6d\ge(2\Delta+3)(\delta+\Delta),
\]
which is~\eqref{eq:graphcond} rearranged. Lemma~\ref{lem:graphical} applies.
\end{proof}

For a target $t$ whose variance
$V_t=V_*-d^3t/(d-c)$ lies in the proposition's interval, take $Q$ nearest
to $nV_t/2$. Then
\begin{equation}\label{eq:tuning}
|b-t|\le\frac{d-c}{nd^3}=\Delta_b/2.
\end{equation}
This is the optimal nearest-lattice error at these fixed $n,d$: every degree
sequence lies on~\eqref{eq:lattice}, and a nearest point is attained. It is
not a positive lower bound for every target. In particular
\[
t=\beta/d^2\quad\Longrightarrow\quad
V_t=(c+1)d+\frac{d(c^2-\beta)}{d-c},\qquad
b=\beta/d^2+O(1/(nd^2)),
\]
for each fixed real $\beta$. Inclusion in the interval is eventual; the
radius cutoff alone is not claimed to guarantee it. Neighboring lattice
points below and above $nV_*/2$ give strictly positive and negative $b$
of magnitude at most $\Delta_b$, using the adjacent two points if the
critical coordinate is integral. For a nonzero varying target, relative
tuning follows when $nd^2|t|\to\infty$, together with interval inclusion.

\begin{proposition}[Exact zero and its arithmetic obstruction]\label{prop:zero}
If $c=p/q>0$ is rational in lowest terms, integer $d>p/q$, and $n=2N$, put
$h_d=(qd-p)/\gcd(qd-p,pd^2)$. Exact $b=0$ requires $h_d\mid N$ and,
in the eventual range of Proposition~\ref{prop:realize}, this condition is
also sufficient. If $c$ is irrational, no finite empirical degree law with
any mean can have exact $b=0$.
\end{proposition}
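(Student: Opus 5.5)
The plan is to reduce exact criticality to a single rational identity for the empirical variance, and then read off both the divisibility criterion and the irrational obstruction from it.

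\emph{Necessity in the rational case.} Take $c=p/q$ in lowest terms, integer $d>p/q$ and $n=2N$. By~\eqref{eq:criticalV},
\[
b=\frac{d-c}{d^3}(V_*-V),\qquad V_*=d+\frac{cd^2}{d-c}=d+\frac{pd^2}{qd-p}.
\]
Since $d-c>0$, exact $b=0$ is equivalent to $V=V_*$. For integer mean, the lattice identity~\eqref{eq:lattice} forces $V=2Q/n=Q/N$ with $Q\in\mathbb Z$. Hence $b=0$ holds if and only if
\[
Q=Nd+N\frac{pd^2}{qd-p}\in\mathbb Z .
\]
Write $g=\gcd(qd-p,pd^2)$, so that $pd^2/(qd-p)=(pd^2/g)/h_d$ in lowest terms. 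Then $N\,pd^2/(qd-p)$ is an integer exactly when $h_d\mid N$, and this gives necessity.

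\emph{Sufficiency.} Assume $h_d\mid N$ and put $Q=nV_*/2$, which is now an integer. Then
\[
V_*=(c+1)d+\frac{c^2d}{d-c}=(c+1)d+O(1),
\]
so for large $d$ the target $V_*$ lies in $[a_-d,a_+d]$ for any fixed $a_-<c+1<a_+$. Proposition~\ref{prop:realize} then attains this exact $Q$ by a graphical symmetric law with integer mean $d$, and this law has $b=0$. The only point to check is that ``within one of $nV_0/2$'' is satisfied with $V_0=V_*$ itself.

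\emph{Irrational $c$.} Here I would not assume an integer mean. Any finite empirical law has rational $d=\sum_i d_i/n$ and rational $\E(D)_2$. If $\E(D)_2=0$, then $b=1$ by~\eqref{eq:bK}. Otherwise $b=0$ means $\pi\E(D)_2=d$, that is, $(1-c/d)\E(D)_2=d^2$. Solving gives
\[
c=d-\frac{d^3}{\E(D)_2},
\]
which is rational, a contradiction.

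The main obstacle is modest: it is the bookkeeping of the reduced fraction defining $h_d$, together with confirming that $V_*$ falls inside the interior variance interval where Proposition~\ref{prop:realize} applies. The latter follows from $V_*/d\to c+1$.
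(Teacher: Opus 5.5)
Your proposal is correct and follows the paper's proof essentially step for step. You reduce $b=0$ to $V=V_*$ via \eqref{eq:criticalV}, then use the parity lattice $V=Q/N$ from \eqref{eq:lattice} to get $NV_*\in\mathbb Z\iff h_d\mid N$. For sufficiency you invoke Proposition~\ref{prop:realize} at $Q=nV_*/2$, using $V_*/d\to c+1$, and for irrational $c$ you solve $c=d-d^3/\E(D)_2$. The paper also remarks that $h_d=O_c(d)$, so admissible even $n\ge K_0d$ with $h_d\mid N$ exist; this shows the sufficiency clause is not vacuous but is not needed for the statement.
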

\begin{proof}
For rational $c$, $V_*=d+pd^2/(qd-p)$ has reduced denominator $h_d$.
Equation~\eqref{eq:lattice} gives $b=0$ exactly when $NV_*\in\mathbb Z$,
equivalently $h_d\mid N$. The construction attains that point when $V_*$
is in its interior interval. Since $h_d=O_c(d)$, there are allowed sizes
$n\ge K_0d$ with $n=O_c(d)$ and this divisibility, and arbitrarily larger
ones. For an arbitrary finite empirical law, $d$ and $H=\E[D(D-1)]$ are
rational. If $b=0$, $H>0$ and $c=d-d^3/H$, which is rational.
\end{proof}
For example $c=1$ gives $h_d=d-1$. For irrational $c$ the exact smallest
absolute deficit in the constructed range is
$\Delta_b\operatorname{dist}(nV_*/2,\mathbb Z)>0$, with no uniform
Diophantine lower bound claimed. The construction also has regular moments:
when $V=(c+1)d+O(1)$,
\[
(m+\ell)/n=(c+1)/(2L)+O(d^{-1/2}),\qquad
\mu_3=0,\qquad \mu_4/d^2\to L(c+1).
\]
Indeed the first assertion follows from the formula for $m+\ell$ and
$\ell<s^2=O(d)$; for the last use
$\mu_4=s^2V+(2\ell/n)(s+1)^2(2s+1)$, whose second term is $O(d^{3/2})$.

\section{Higher-order barrier corrections and moment comparisons}\label{app:moments}
This appendix completes the expansions in Theorem~\ref{thm:D} and
compares empirical laws sharing their first two or three moments.
We use $u=b/K$, $A=Ku^2/2$, $N(z)=z-F(z)$ and
$g(z)=B(z)-(1-z)F'(z)$.

\subsection{Uniform higher-order expansion}
\begin{proof}[Derivation of~\eqref{eq:location}--\eqref{eq:Drefined}]
For the refinement, the exact derivative formula for $2\le j\le5$ is
\[
F_k^{(j)}(z)=(-1)^j(k-1)_j(1-z)^{k-j-2}
[(j-1-j\pi)(1-z)-(1-\pi)(k-j-1)z].
\]
It implies $|F^{(j)}|=O(d^j)$ throughout the maximizing interval. Hence
\begin{align*}
N(z)&=bz-Kz^2/2-T_3z^3/6-U_4z^4/24+O(d^5z^5),\\
g(z)&=b-Kz+(K-T_3)z^2/2+O(d^4z^3).
\end{align*}
The finite bounds give $\widehat z-u=O(du^2)$ first. Substitution in the
second display and division by $K$ give~\eqref{eq:location}, since
$du=O(d^{-2})$. Put $q=(K-T_3)/(2K)$. Substitution into the first display gives
\[
N(\widehat z)=A-T_3u^3/6+
[T_3^2/(8K)-K/8-U_4/24]u^4+O(d^5u^5).
\]
The location remainder is harmless: $N'(u+qu^2)=-Ku^2/2+O(d^4u^3)$,
whose product with $O(d^2u^3)$ is absorbed in the stated error, as is its
second-order displacement error. Now
$F(\widehat z)=u+(q-K/2)u^2+O(d^3u^3)$. Expanding
$(1-F)^{-1}=1+F+F^2+O(u^3)$ and multiplying gives precisely
\eqref{eq:Drefined}. The omitted terms are $O(d^5u^5)=O(b^5/d^5)$.
\end{proof}

\subsection{Third- and fourth-moment comparisons}
For two laws sharing $d,v$, their exact $b$ agrees and
$\Delta K=(1-2\pi)\Delta\mu_3/d$. Since
$\Delta K=O(d^{1/2})$, $\Delta T_3=O(d^{3/2})$, and
$\Delta U_4=O(d^{5/2})$, subtracting~\eqref{eq:Drefined} yields
\begin{equation}\label{eq:skew}
\adet^{(2)}-\adet^{(1)}
=-\frac{(1-2\pi)b^2\Delta\mu_3}{2dK_1K_2}
+O(b^3/d^{9/2}).
\end{equation}
To check the orders directly, the varying parts of $\E(D)_4/d$ and
$\E(D)_5/d$ are respectively
$(4-6/d)\mu_3+\mu_4/d$ and
$(10d-40+35/d)\mu_3+(5-10/d)\mu_4+\mu_5/d$.
If also $\mu_{3,1}=\mu_{3,2}$, then $K$ agrees and
\[
\Delta T_3=-(2-3\pi)\Delta\mu_4/d=O(d),\quad
\Delta U_4=(3-4\pi)[(5-10/d)\Delta\mu_4+\Delta\mu_5/d]=O(d^2).
\]
The $T_3^2/K$ and $U_4$ differences times $u^4$ are $O(b^4/d^6)$;
the other fourth-order differences are smaller. The separate fifth-order
remainders are absorbed because $bd\le c+1$. Thus
\begin{equation}\label{eq:kurt}
\adet^{(2)}-\adet^{(1)}
=\frac{(2-3\pi)b^3\Delta\mu_4}{6dK^3}+O(b^4/d^6).
\end{equation}
These comparisons are deterministic. If a displayed leading moment
difference is exceptionally small, no dominance over its remainder is claimed.

\section*{Author disclosure}
The author is employed by Yandex. This research was conducted independently
in the author's personal time and received no dedicated funding from Yandex.
Some computational work used Yandex equipment and computing infrastructure.
The views expressed are solely those of the author and do not necessarily
represent those of Yandex. The author declares no other competing interests.

\section*{AI assistance}
Generative AI assistants, including OpenAI GPT-6.1 and GPT-6 Astra models,
were used for literature analysis, mathematical calculations and proof
development, manuscript drafting and editing, and checks of arguments
and references. OpenResearch was used to organize research tasks and
supporting artifacts. The author is responsible for all mathematical
results and the final manuscript.

\end{document}